\newtheorem{theorem}{Theorem}
\newtheorem{lemma}[theorem]{Lemma}
\newtheorem{proposition}[theorem]{Proposition}
\newtheorem{corollary}[theorem]{Corollary}
\newcommand{\vertiii}[1]{{\left\vert\kern-0.25ex\left\vert\kern-0.25ex\left\vert #1 \right\vert\kern-0.25ex\right\vert\kern-0.25ex\right\vert}}
\theoremstyle{definition}
\newtheorem{definition}[theorem]{Definition}
\newtheorem{remark}[theorem]{Remark}
\title{\textbf{The incompressible Navier-Stokes limit from the lattice BGK Boltzmann equation}}
\author[1]{Zhongyang Gu\thanks{zgu@ms.u-tokyo.ac.jp}}  
\author[1]{Xin Hu\thanks{k99poaaron2@gmail.com}}
\author[2]{Pritpal Matharu\thanks{pritpal@kth.se}}
\author[3]{Bartosz Protas\thanks{bprotas@mcmaster.ca}}
\author[1]{Makiko Sasada\thanks{sasada@ms.u-tokyo.ac.jp}}
\author[4]{Tsuyoshi Yoneda\thanks{t.yoneda@r.hit-u.ac.jp}}
\affil[1]{Graduate School of Mathematical Sciences, The University of Tokyo, 3-8-1 Komaba, Meguro-ku, Tokyo 153-8914, Japan}
\affil[2]{Department of Mathematics, KTH Royal Institute of Technology, Lindstedtsv{\"a}gen 25, 114 28 Stockholm, Sweden}
\affil[3]{Department of Mathematics and Statistics, McMaster University, 1280 Main Street West, Hamilton, Ontario L8S 4K1, Canada}
\affil[4]{Graduate School of Economics, Hitotsubashi University, 2-1 Naka, Kunitachi, Tokyo 186-8601, Japan}
\date{}							
\begin{document}
\maketitle

\begin{abstract}
In this paper, we prove that a local weak solution to the $d$-dimensional incompressible Navier-Stokes equations ($d \geq 2$) can be constructed by taking the hydrodynamic limit of a velocity-discretized Boltzmann equation with a simplified BGK collision operator. Moreover, in the case when the dimension is $d=2,3$, we characterize the combinations of finitely many particle velocities and probabilities that lead to the incompressible Navier-Stokes equations in the hydrodynamic limit. Numerical computations conducted in 2D provide information about the rate with which this hydrodynamic limit is achieved when the Knudsen number tends to zero.
\end{abstract}

\begin{center}
Keywords: Incompressible Navier-Stokes equations, Lattice BGK Boltzmann equation, Hydrodynamic limit.
\end{center}

\section{Introduction}
\label{Sec:Intro}

The lattice Boltzmann method (LBM for short), originating from lattice gas automata, is a successful and promising numerical scheme for simulating fluid flows. 
Different from conventional schemes which are based on macroscopic continuum equations, the LBM is based on microscopic kinetic equations.
The approach of using microscopic kinetic equations has advantages over other computational fluid dynamics methods in the sense of easy implementation of boundary conditions and parallel algorithms.
The underlying reason for the effectiveness of the LBM is because of the fact that macroscopic fluid phenomena are collective behaviors of microscopic interactions between particles.
The LBM is characterized by the numerical implementation of the lattice Boltzmann equation (LBE for short)
\begin{align} \label{LBE:CDis}
f_i(x + c_i \Delta x, t + \Delta t) = f_i(x,t) + \Omega_i(x,t), \quad i = 0,1, ..., M,
\end{align}
where $M$ represents the number of non-zero particle velocities that we want to consider, $c_i$ represents the $i$-th local particle velocity, $f_i$ represents the particle velocity distribution function along $c_i$ and $\Omega_i$ represents the collision operator which characterizes the rate of change of $f_i$ resulting from collisions.
There are many different choices for the collision operator $\Omega_i$ in (\ref{LBE:CDis}).
Among which, the one that is most commonly used for Navier-Stokes
simulations is the lattice Bhatnagar–Gross–Krook (BGK)
collision operator \cite{BGK}
\begin{align} \label{Col:LBGK}
\Omega_i(f) = \frac{\Delta t}{\tau_R} (f_{i,eq} - f_i), \quad f_{i,eq} = w_i \varrho \Big( 1 + 3 c_i \cdot U + \frac{9}{2} (c_i \cdot U)^2 - \frac{3}{2} |U|^2 \Big)
\end{align}
where $\tau_R$ is the relaxation time, $w_i$ represents certain weight for velocity $c_i$,
\begin{align*}
\varrho = \sum_i f_i \quad \text{and} \quad \varrho U = \sum_i c_i f_i.
\end{align*}
In particular, $\varrho$ is the macroscopic fluid density and $U$ is the macroscopic fluid velocity.
Throughout this paper, we use $d \in \mathbf{N}$ to denote the space dimension and we work with the case where the dimension $d \geq 2$.
The $f_{i,eq}$ term in the lattice BGK collision operator (\ref{Col:LBGK}) is derived from the expansion of the Maxwell-Boltzmann equilibrium distribution 
\begin{align} \label{Col:BGK}
\mathcal{M}_f(x,v,t) = \frac{\varrho}{(2 \pi \theta)^{d/2}} \mathrm{exp} \Big( - \frac{|v - U|^2}{2 \theta} \Big)
\end{align}
with respect to the velocity variable $v$ in Hermite polynomials up to the third moment, i.e., polynomials in $v$ of power $2$; see e.g. \cite[Chapter 3]{LBM6}.
In the standard BGK Boltzmann equation, the factor $\theta$ in (\ref{Col:BGK}) is the macroscopic fluid temperature defined by
\begin{align*}
\varrho U^2 + d \varrho \theta = \int_{\mathbf{R}^d} f(x,v,t) |v|^2 \, dv.
\end{align*}
In addition, the $f_{i,eq}$ term in (\ref{Col:LBGK}) is obtained under the isothermal assumption that $\theta=1$.

It is well-known that the Navier-Stokes equations can be derived formally from the LBE (\ref{LBE:CDis}) by the Taylor expansion and the Chapman-Enskog expansion, see e.g. \cite{ChenDoolen}.
This is the mathematical foundation for the usage of the lattice Boltzmann equation (\ref{LBE:CDis}) to model fluid dynamics.
However, this derivation is not rigorous from the perspective of
mathematical analysis since not only how (\ref{LBE:CDis})  relates to
the discretization of the Navier-Stokes equations is not known, but
also the justification of
how the continuous equation  (\ref{LBE:CDis}) relates to the Navier-Stokes equations is missing.
In order to justify the relation between the continuous equation (\ref{LBE:CDis}) and the Navier-Stokes equations in the most rigorous way, one should investigate the hydrodynamic limit of the continuous lattice Boltzmann equation when the Knudsen number is converging to zero.

The study of the hydrodynamic limit from the Boltzmann equation was initiated by the work of Bardos, Golse and Levermore \cite{BGL93}, where they derived Leray solutions to the incompressible Navier-Stokes equations from DiPerna's-Lions' renormalized solutions \cite{DipLio} of the Boltzmann equation with Grad's cutoff kernel.
Since then, the hydrodynamic limit of the Boltzmann equation has become one of the major research topics in fluid mechanics, even until today.
Many significant improvements were established subsequently.
For example, Lions and Masmoudi \cite{LioMas} extended the work of Bardos, Golse and Levermore \cite{BGL93} to a more general time-continuous case. 
Later on, Golse and Saint-Raymond \cite{GSR09} proved the convergence of DiPerna-Lions' renormalized solutions \cite{DipLio} to Leray solutions in the case for hard cutoff potentials.
Same convergence for the case of soft potentials was established by Levermore and Masmoudi \cite{LevMas}.
Furthermore, Arsenio \cite{Arse} extended this convergence to the case for non-cutoff potentials.
Saint-Raymond \cite{SaRay} considered the Boltzmann equation with BGK collision operator and showed that solutions to the Boltzmann equation with BGK collision operator that are fluctuations near the Maxwellian converge in hydrodynamic limit to Leray solutions of the incompressible Navier-Stokes-Fourier equations.
It should be remarked that for all these fascinating results that are mentioned above, the solvability of the Boltzmann equation is proved using an entropy method approach.
In order to obtain solutions with better regularity than Leray solutions in the hydrodynamic limit, Jiang, Xu and Zhao \cite{JXZ} employed an energy method approach to solve the Boltzmann equation in cases for both non-cutoff and cutoff collision operators.
They derived a global energy estimate for the Boltzmann equation which holds when the initial data is sufficiently small.
This global energy estimate guarantees the existence of a global-in-time solution to the Boltzmann equation with Sobolev $H^N (N \geq 3)$ regularity in spatial variable $x$ when the initial data is sufficiently small.
Hence, by taking the hydrodynamic limit for this family of global solutions, a global classical solution to the incompressible Navier-Stokes-Fourier equations can be obtained provided that the initial fluid velocity is sufficiently small.

The purpose of this paper is to give a completely rigorous justification for the hydrodynamic limit of a velocity-discretized Boltzmann equation with simplified BGK collision operator.
We shall start by introducing our model, i.e., the lattice BGK (LBGK for short) Boltzmann equation, in an explicit way.
Let $n \in \mathbf{N}$ where $n$ denotes the number of non-zero velocities that we want to consider in a lattice structure.
Let $\mathcal{V} := \{ v_0 = 0, v_1, ..., v_n \} \subset \mathbf{R}^d$ be a set of velocities for particles such that $v_i \neq v_j$ for any $i,j \in \{ 0, 1, ..., n \}$ satisfying $i \neq j$.
Let $w = (w_0, w_1, ..., w_n) \in \mathbf{R}_+^{n+1}$ be a weight that we are going to impose on $\mathcal{V}$ where $\mathbf{R}_+ := \{ a \in \mathbf{R} \bigm| a > 0 \}$. 
We call the pair $(\mathcal{V}, w)$ a lattice.
\begin{definition} \label{Iso:Lattice}
We call a lattice $(\mathcal{V}, w)$ to be an isotropic lattice
associated with the speed of sound $c_s > 0$ if they satisfy
\begin{equation} \label{Sum:wv}
\left\{
\begin{aligned} 
\sum_{i=0}^n w_i &=1,& \\
\sum_{i=0}^n w_i v_{i,\alpha} &=0,& \\
\sum_{i=0}^n w_i v_{i,\alpha} v_{i,\beta} &= c_s^2 \delta_{\alpha \beta},& \\
\sum_{i=0}^n w_i v_{i,\alpha} v_{i,\beta} v_{i,\gamma} &=0,& \\
\sum_{i=0}^n w_i v_{i,\alpha} v_{i,\beta} v_{i,\gamma} v_{i,\zeta} &= c_s^4 ( \delta_{\alpha \beta} \delta_{\gamma \zeta} + \delta_{\alpha \gamma} \delta_{\beta \zeta} + \delta_{\alpha \zeta} \delta_{\beta \gamma} ),&
\end{aligned}
\right.
\end{equation}
where the notation $v_{i,\eta}$ ($1 \leq \eta \leq d$) represents the $\eta$-component of particle velocity $v_i$.
\end{definition}

The notion of an ``isotropic lattice'' follows from the standard terminology describing lattice symmetries in the classical lattice Boltzmann models, see, e.g., \cite{Wolf}.
Let $\varepsilon>0$ be the Knudsen number and $(\mathcal{V}, w)$ be an isotropic lattice. 
We define the LBGK Boltzmann equation on $(\mathcal{V}, w)$ in $\mathbf{R}^d$ to be the vector system
\begin{equation} \label{LBE}
\varepsilon \partial_t g^\varepsilon + v \cdot \nabla_x g^\varepsilon = \frac{1}{\varepsilon \nu} (g_{eq}^\varepsilon - g^\varepsilon), \quad g^\varepsilon \bigm|_{t=0} = g^{0, \varepsilon},
\end{equation}
where $\nu$ denotes the relaxation time,
\[
g^\varepsilon := (g_0^\varepsilon, g_1^\varepsilon, ..., g_n^\varepsilon), \quad g_{eq}^\varepsilon := (g_{0,eq}^\varepsilon, g_{1,eq}^\varepsilon, ..., g_{n,eq}^\varepsilon), \quad g^{0, \varepsilon} := (g_0^{0, \varepsilon}, g_1^{0, \varepsilon}, ..., g_n^{0, \varepsilon})
\]
and
\[
v := (v_0, v_1, ..., v_n)^\mathrm{T}, \quad \nabla_x g^\varepsilon = (\nabla_x g_0^\varepsilon, \nabla_x g_1^\varepsilon, ..., \nabla_x g_n^\varepsilon).
\]
For each $i$ which takes integer value from $0$ to $n$, the $v_i$ in velocity matrix $v$ is the $i$-th velocity defined in $\mathcal{V}$ and the $i$-th component of the LBGK Boltzmann equation (\ref{LBE}) reads as
\begin{equation*} 
\varepsilon \partial_t g^\varepsilon_i + v_i \cdot \nabla_x g_i^\varepsilon =\frac{1}{\varepsilon \nu} ( g_{i,eq}^\varepsilon-g^\varepsilon_i ), \quad g^{\varepsilon}_i \bigm|_{t=0} = g_i^{0, \varepsilon},
\end{equation*}
where 
\begin{equation} \label{Ex:gieq}
g^\varepsilon_{i,eq} := \rho^\varepsilon + \frac{v_i \cdot u^\varepsilon}{c_s^2} + \frac{\varepsilon}{2 c_s^4} \sum_{\alpha,\beta = 1}^d u^\varepsilon_\alpha u^\varepsilon_\beta ( v_{i,\alpha}v_{i,\beta} - c_s^2 \delta_{\alpha \beta} )
\end{equation}
with 
\begin{equation} \label{Def:rhou}
\rho^\varepsilon := \sum_{i=0}^n w_i g_i^\varepsilon, \quad u^\varepsilon := \sum_{i=0}^n w_i v_i g_i^\varepsilon.
\end{equation}
Following the standard terminology, we call the numerical implementation of the LBGK Boltzmann equation (\ref{LBE}) on an isotropic lattice to be the D$d$Q$n$ scheme, see e.g. \cite{LBM6}.
As an important fact, we emphasize that the summation condition (\ref{Sum:wv}) implies that
\begin{equation} \label{Alt:rhou}
\rho^\varepsilon = \sum_{i=0}^n w_i g_{i,eq}^\varepsilon, \quad u^\varepsilon = \sum_{i=0}^n w_i v_i g_{i,eq}^\varepsilon.
\end{equation}

In order to achieve our goal, we follow the philosophy of Jiang, Xu
and Zhao \cite{JXZ} in using the energy method approach to solve the
Boltzmann equation, and the fundamental idea of Bardos, Golse and
Levermore \cite{BGL93} in taking the hydrodynamic limits.  Apart from
the rigorous analysis which constructs a local solution to the
incompressible Navier-Stokes equations from the LBGK Boltzmann
equation, our work also contains a numerical part, namely, we conduct computations in 2D to provide information
  about the rate with which the hydrodynamic limit is achieved when
  the Knudsen number tends to zero.  More specifically, we work with
the D$2$Q$9$ scheme to solve the LBGK Boltzmann equation (\ref{LBE})
by considering $9$ weights
\begin{equation} \label{D2Q9w}
w_i=\left\{
 \begin{aligned}
 \frac{4}{9} \quad &\text{for} \quad i=0,& \\
 \frac{1}{9} \quad &\text{for} \quad i=1, 2, 3, 4,& \\
 \frac{1}{36} \quad &\text{for}  \quad i=5, 6, 7, 8&
 \end{aligned}
\right.
\end{equation}
and $9$ velocities
\begin{equation} \label{D2Q9v}
\left\{
 \begin{aligned}
 v_0 &= (0,0),& \\
 v_1 &= (1,0),& \quad v_2 &=(0,1),& \quad v_3 &=(-1,0),& \quad v_4 &=(0,-1),& \\
 v_5 &= (1,1),& \quad v_6 &=(-1,1),& \quad v_7 &=(-1,-1),& \quad v_8 &=(1,-1)&
 \end{aligned}
\right.
\end{equation}
with sound speed $c_s = \frac{1}{\sqrt{3}}$.

\subsection{Rigorous justification of the hydrodynamic limit: Analysis part}
\label{Sub:InAna}

In order to establish the solvability of the LBGK Boltzmann equation in a simple way, we follow the philosophy in \cite{GHYpv} to do cutoff to equation (\ref{LBE}) in Fourier space, i.e., we work with the approximate equation of (\ref{LBE}) instead of (\ref{LBE}) itself.
We shall see by the end of Section \ref{Sec:DerNS} that this consideration would not affect the incompressible Navier-Stokes limit.
To summarize the underlying reason in a philosophical sentence, the hydrodynamic limit is not sensitive to small changes on the form of the Boltzmann equation.

Suppose that $(\mathcal{V}, w)$ is an isotropic lattice. 
We next define the approximate equation of (\ref{LBE}) on $(\mathcal{V}, w)$.
Let $\varepsilon \in (0,1)$. For $h \in L^2(\mathbf{R}^d)$, we define that
\begin{align} \label{Ctf:op}
\Lambda_\varepsilon(h) := \int_{\mathbf{R}^d} 1_{|\xi| < \frac{1}{\varepsilon}}(\xi) \widehat{h}(\xi) \mathrm{e}^{2 \pi i x \cdot \xi} \, d\xi, \quad \widehat{h}(\xi) := \int_{\mathbf{R}^d} h(x) \mathrm{e}^{- 2 \pi i x \cdot \xi} \, dx,
\end{align}
where $1_{|\xi| < \varepsilon^{-1}}$ represents the indicator function for the open ball $\{ \xi \in \mathbf{R}^d \bigm| |\xi| < \varepsilon^{-1} \}$, i.e., $\Lambda_\varepsilon(h)$ is the cutoff in Fourier space for $h$ with respect to spatial $x$-variable.
Since the LBGK Boltzmann equation (\ref{LBE}) is already discretized in velocity variable $v$, there is no need to do cutoff to it in Fourier space with respect to $v$.
Hence, we define the approximate LBGK Boltzmann equation to be the vector equation
\begin{equation} \label{ALBE}
\varepsilon \partial_t \Lambda_\varepsilon(g^\varepsilon) + v \cdot \nabla_x \Lambda_\varepsilon(g^\varepsilon) = \frac{1}{\varepsilon\nu} \big( \mathcal{G}_{eq}^\varepsilon - \Lambda_\varepsilon(g^{\varepsilon}) \big), \quad \Lambda_\varepsilon(g^\varepsilon) \bigm|_{t=0} = \Lambda_\varepsilon(g^{0, \varepsilon})
\end{equation}
where
\[
\Lambda_\varepsilon(g^\varepsilon) = \big( \Lambda_\varepsilon(g_0^\varepsilon), ..., \Lambda_\varepsilon(g_n^\varepsilon) \big), \quad \mathcal{G}_{eq}^\varepsilon := (\mathcal{G}_{0,eq}^\varepsilon, ..., \mathcal{G}_{n,eq}^\varepsilon), \quad \Lambda_\varepsilon(g^{0, \varepsilon}) := \big( \Lambda_\varepsilon(g_0^{0, \varepsilon}), ..., \Lambda_\varepsilon(g_n^{0, \varepsilon}) \big).
\]
For each $i$ which takes integer value from $0$ to $n$, the $i$-th component of the approximate LBGK Boltzmann equation (\ref{ALBE}) reads as
\begin{equation*} 
\varepsilon \partial_t \Lambda_\varepsilon(g_i^\varepsilon) + v_i \cdot \nabla_x \Lambda_\varepsilon(g_i^\varepsilon) = \frac{1}{\varepsilon\nu} \big( \mathcal{G}_{i,eq}^\varepsilon - \Lambda_\varepsilon(g^{\varepsilon}_i) \big), \quad \Lambda_\varepsilon(g^\varepsilon_i) \bigm|_{t=0} = \Lambda_\varepsilon(g_i^{0, \varepsilon})
\end{equation*}
where
\begin{equation} \label{Ctf:gieq}
\begin{split}
&\mathcal{G}_{i,eq}^\varepsilon := \Lambda_\varepsilon(\rho^\varepsilon) + \frac{v_i \cdot \Lambda_\varepsilon(u^\varepsilon)}{c_s^2} + \frac{\varepsilon}{2 c_s^4} \sum_{\alpha, \beta = 1}^d (v_{i,\alpha} v_{i,\beta} - c_s^2 \delta_{\alpha \beta}) \Lambda_\varepsilon\big( \Lambda_\varepsilon(u^\varepsilon)_\alpha \Lambda_\varepsilon(u^\varepsilon)_\beta \big), \\
&\Lambda_\varepsilon(\rho^\varepsilon) := \sum_{i=0}^n w_i \Lambda_\varepsilon(g_i^\varepsilon), \quad \Lambda_\varepsilon(u^\varepsilon) := \sum_{i=0}^n w_i v_i \Lambda_\varepsilon(g_i^\varepsilon)
\end{split}
\end{equation}
and $\Lambda_\varepsilon(u^\varepsilon)_\alpha$ denotes the $\alpha$-th component of $\Lambda_\varepsilon(u^\varepsilon)$.
Analogous to (\ref{Alt:rhou}), we deduce by (\ref{Sum:wv}) that
\begin{align} \label{Alt:Crhu}
\Lambda_\varepsilon(\rho^\varepsilon) = \sum_{i=0}^n w_i \mathcal{G}_{i,eq}^\varepsilon, \quad \Lambda_\varepsilon(u^\varepsilon) = \sum_{i=0}^n w_i v_i \mathcal{G}_{i,eq}^\varepsilon.
\end{align}

Before stating our main results, we would like to define several notations and concepts that will repeatedly appear in this paper.
For $f = \big( f(v_0), ..., f(v_n) \big), \, h = \big( h(v_0), ..., h(v_n) \big) \in \mathbf{R}^{n+1}$, we define inner products
\begin{align*} 
\langle f, h \rangle_{L_v^2} := \sum_{i=0}^n f(v_i) h(v_i), \quad \langle f, h \rangle_{L_{v,w}^2} := \sum_{i=0}^n w_i f(v_i) h(v_i).
\end{align*}
For $f = \big( f(x, v_0), ..., f(x, v_n) \big), \, h = \big( h(x, v_0), ..., h(x, v_n) \big) \in L^2(\mathbf{R}_x^d)$, we define inner products
\begin{align*} 
\langle f, h \rangle_{L_x^2 L_v^2} := \sum_{i=0}^n \int_{\mathbf{R}^d} f(x, v_i) h(x, v_i) \, dx, \quad \langle f, h \rangle_{L_x^2 L_{v,w}^2} := \sum_{i=0}^n w_i \int_{\mathbf{R}^d} f(x, v_i) h(x, v_i) \, dx.
\end{align*}
Let $\mathbf{N}_0 := \mathbf{N} \cup \{0\}$ and $\tau = (\tau_1, \tau_2, ..., \tau_d) \in \mathbf{N}_0^d$. 
We define
\begin{align*}
|\tau|_\mathrm{s} := \sum_{j=1}^d \tau_j
\end{align*}
and use the notation $\partial_x^\tau$ to denote the differentiation $\partial_{x_1}^{\tau_1} \partial_{x_2}^{\tau_2} ... \partial_{x_d}^{\tau_d}$.
For $f = \big( f(x,v_0), ..., f(x,v_n) \big) \in H^\eta(\mathbf{R}_x^d)$ with $\eta \in \mathbf{R}$, we define the norm
\begin{align*}
\| f \|_{H_x^\eta L_{v,w}^2}^2 := \sum_{i=0}^n w_i \| f(\cdot, v_i) \|_{H^\eta(\mathbf{R}_x^d)}^2.
\end{align*}
We set $H_x^\eta L_{v,w}^2$ to be the Sobolev space $H^\eta(\mathbf{R}_x^d)$ equipped with norm $\| \cdot \|_{H_x^\eta L_{v,w}^2}$.
By the Cauchy-Schwarz inequality, we can easily observe that the space $H_x^\eta L_{v,w}^2$ is equivalent to the standard Sobolev space $H^\eta(\mathbf{R}_x^d)$.
Regarding the approximate equation (\ref{ALBE}), we establish its local well-posedness.

\begin{lemma} \label{Sol:ALBE}
Suppose that $(\mathcal{V}, w)$ is an isotropic lattice.
Let $\varepsilon>0$ be fixed and $m \in \mathbf{N}$ with $m > d$.
Then, for any $g^{0, \varepsilon} \in H_x^m L_{v,w}^2$, there exist constants $T_0 = T_0(g^{0, \varepsilon}, \mathcal{V}, c_s, m, d) > 0$ and $c_0 = c_0(\mathcal{V}, c_s, m, d) > 0$, such that the approximate LBGK Boltzmann equation (\ref{ALBE}) on $(\mathcal{V}, w)$ with initial data $\Lambda_\varepsilon(g^{0, \varepsilon})$ is locally well-posedness.
In particular, the unique local solution
\begin{align*}
g^\varepsilon \in L^\infty( [0,T_0]; H_x^m L_{v,w}^2 )
\end{align*}
satisfies $g^\varepsilon \bigm|_{t=0} = \Lambda_\varepsilon(g^{0, \varepsilon})$, $g^\varepsilon = \Lambda_\varepsilon(g^\varepsilon)$ and the local energy inequality
\begin{equation} \label{loEE:ALBE}
\begin{split}
&\sup_{t \in [0,T_0]} \| g^\varepsilon(t) \|_{H_x^m L_{v,w}^2}^2 + \frac{1}{\varepsilon^2} \int_0^{T_0} \| \mathcal{G}_{eq}^\varepsilon(s) - g^\varepsilon(s) \|_{H_x^m L_{v,w}^2}^2 \, ds \\
&\ \ \leq \frac{(\nu + \nu^2) \| \Lambda_\varepsilon(g^{0, \varepsilon}) \|_{H_x^m L_{v,w}^2}^2}{\nu - c_0 T_0 \| \Lambda_\varepsilon(g^{0, \varepsilon}) \|_{H_x^m L_{v,w}^2}^2} - \nu \| \Lambda_\varepsilon(g^{0, \varepsilon}) \|_{H_x^m L_{v,w}^2}^2.
\end{split}
\end{equation}
\end{lemma}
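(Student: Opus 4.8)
The plan is to recast the frequency-truncated system (\ref{ALBE}) as an autonomous ordinary differential equation on the closed subspace $X_\varepsilon := \Lambda_\varepsilon(H_x^m L_{v,w}^2) = \{ f \in H_x^m L_{v,w}^2 : \operatorname{supp}\widehat{f} \subset \{ |\xi| < \varepsilon^{-1} \} \}$, on which $\Lambda_\varepsilon$ acts as the identity, and then to solve it by the Cauchy--Lipschitz theorem for ODEs in Banach spaces. Writing the right-hand side of (\ref{ALBE}) as $F(g) := \varepsilon^{-1}\big( -v\cdot\nabla_x g + (\varepsilon\nu)^{-1}(\mathcal{G}_{eq}^\varepsilon(g)-g) \big)$, I would first check that $F$ maps $X_\varepsilon$ into itself: differentiation in $x$, the weighted velocity averages (\ref{Ctf:gieq}) defining $\Lambda_\varepsilon(\rho^\varepsilon)$ and $\Lambda_\varepsilon(u^\varepsilon)$, and the outer cutoff in the quadratic term all preserve the Fourier support $\{ |\xi| < \varepsilon^{-1} \}$.

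Next I would show $F$ is locally Lipschitz on $X_\varepsilon$. Its linear part is bounded: on $X_\varepsilon$ a Bernstein inequality gives $\| \nabla_x g \|_{H_x^m L_{v,w}^2} \le C\varepsilon^{-1}\| g \|_{H_x^m L_{v,w}^2}$, while the relaxation and projection contributions are manifestly bounded with constants depending only on $\mathcal{V}, c_s$. Its quadratic part is $(\varepsilon\nu)^{-1}\mathcal{Q}$, where $\varepsilon\mathcal{Q}(g)$ denotes the last, bilinear term of (\ref{Ctf:gieq}); this is a bounded bilinear map because $H^m(\mathbf{R}^d)$ is a Banach algebra for $m > d/2$ (hence for $m > d$), so that $\| \mathcal{Q}(g) - \mathcal{Q}(h) \| \le C(\| g \| + \| h \|)\| g-h \|$. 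The Picard--Lindel\"of theorem then furnishes a unique maximal solution $g^\varepsilon \in C^1([0,T_*); X_\varepsilon)$ with $g^\varepsilon|_{t=0} = \Lambda_\varepsilon(g^{0,\varepsilon})$; since it lives in $X_\varepsilon$ it satisfies $g^\varepsilon = \Lambda_\varepsilon(g^\varepsilon)$ automatically.

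The heart of the argument is the energy estimate, which simultaneously yields (\ref{loEE:ALBE}) and, via the standard continuation criterion, extends the solution up to the claimed $T_0$. Testing the equation against $g^\varepsilon$ in the $H_x^m L_{v,w}^2$ inner product, the transport term drops because each $v_i$ is constant, $\int \partial_x^\tau(v_i\cdot\nabla_x g_i^\varepsilon)\, \partial_x^\tau g_i^\varepsilon \, dx = \tfrac12 v_i\cdot\int \nabla_x |\partial_x^\tau g_i^\varepsilon|^2 \, dx = 0$. For the relaxation term the key algebraic input is that the isotropy relations (\ref{Sum:wv}) make $Pg := (\rho^\varepsilon + v_i\cdot u^\varepsilon/c_s^2)_i$ the $L_{v,w}^2$-orthogonal projection onto $\mathrm{span}\{1, v_1, \dots, v_d\}$; since $P$ acts only on the velocity index it commutes with $\partial_x^\tau$ and $\Lambda_\varepsilon$, so this orthogonality persists in $H_x^m L_{v,w}^2$. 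Relation (\ref{Alt:Crhu}) gives $P(\mathcal{G}_{eq}^\varepsilon - g^\varepsilon) = 0$ and the third- and fourth-moment conditions in (\ref{Sum:wv}) give $P\mathcal{Q}(g^\varepsilon) = 0$. Writing $\langle \mathcal{G}_{eq}^\varepsilon - g^\varepsilon, g^\varepsilon \rangle = -\| \mathcal{G}_{eq}^\varepsilon - g^\varepsilon \|^2 + \langle \mathcal{G}_{eq}^\varepsilon - g^\varepsilon, \mathcal{G}_{eq}^\varepsilon \rangle$ and replacing $\mathcal{G}_{eq}^\varepsilon$ by its non-hydrodynamic part $\varepsilon\mathcal{Q}(g^\varepsilon)$ inside the last inner product, the factor $\varepsilon$ cancels the singular $\varepsilon^{-1}$, and a Young inequality produces the Riccati-type differential inequality $$\frac{d}{dt}\| g^\varepsilon \|_{H_x^m L_{v,w}^2}^2 + \frac{1}{\varepsilon^2\nu}\| \mathcal{G}_{eq}^\varepsilon - g^\varepsilon \|_{H_x^m L_{v,w}^2}^2 \le \frac{c_0}{\nu}\| g^\varepsilon \|_{H_x^m L_{v,w}^2}^4,$$ with $c_0 = c_0(\mathcal{V}, c_s, m, d)$ independent of $\varepsilon$.

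Integrating this inequality and comparing with the explicit solution of $y' = (c_0/\nu)y^2$, which stays finite exactly for $t < \nu/(c_0\| \Lambda_\varepsilon(g^{0,\varepsilon}) \|_{H_x^m L_{v,w}^2}^2)$, bounds $\sup_t \| g^\varepsilon \|^2$; integrating once more over $[0,T_0]$ then converts the remaining dissipation into the stated inequality (\ref{loEE:ALBE}), provided one takes $T_0 < \nu/(c_0\| \Lambda_\varepsilon(g^{0,\varepsilon}) \|_{H_x^m L_{v,w}^2}^2)$, which explains the dependence $T_0 = T_0(g^{0,\varepsilon}, \mathcal{V}, c_s, m, d)$. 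Since this a priori bound keeps $\| g^\varepsilon(t) \|_{H_x^m L_{v,w}^2}$ finite on $[0,T_0]$, the continuation criterion forces $T_* > T_0$, so $g^\varepsilon \in L^\infty([0,T_0]; H_x^m L_{v,w}^2)$, completing the proof. I expect the main obstacle to be precisely the algebraic bookkeeping that places the nonlinear contribution in the range of $I-P$ so that the singular prefactor $\varepsilon^{-1}$ cancels; this cancellation is what makes $c_0$ independent of $\varepsilon$ and must be secured before the Riccati argument can close.
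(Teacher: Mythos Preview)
Your proposal is correct and follows essentially the same approach as the paper: existence via Picard iteration on the frequency-truncated space (you invoke the abstract Cauchy--Lipschitz theorem where the paper writes the iteration out explicitly, which is purely cosmetic), the same energy identity exploiting that the transport term integrates to zero and that the isotropy conditions (\ref{Sum:wv}) make $\mathcal{G}_{eq}^\varepsilon - g^\varepsilon$ orthogonal in $L_{v,w}^2$ to the hydrodynamic part of $\mathcal{G}_{eq}^\varepsilon$, leaving only the $\varepsilon$-scaled quadratic piece and producing the Riccati inequality with an $\varepsilon$-independent constant, and finally extension to the $\varepsilon$-independent time $T_0$ by the a priori bound plus continuation. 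The paper separates uniqueness and continuous dependence into their own section via a difference-energy estimate, whereas you get both from Cauchy--Lipschitz; either route is fine.
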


The merit of working with the approximate equation (\ref{ALBE}) instead of the original LBGK Boltzmann equation (\ref{LBE}) is that the existence of a local solution to (\ref{ALBE}) can be constructed easily using the classical Picard's method.
Since we are considering the case where $m > d$, the Sobolev space $H^m(\mathbf{R}_x^d)$ is indeed a Banach algebra.
As a result, the local energy estimate (\ref{loEE:ALBE}) can be derived easily by the traditional energy method. 
To show the existence of a local solution, for each $0 \leq i \leq n$, we consider a sequence of functions $\{ f_{i, j}^\varepsilon(t) \}_{j \in \mathbf{N}_0}$ that is constructed inductively by
\begin{align*}
f_{i, j+1}^\varepsilon(t) := f_{i, 0}^\varepsilon - \frac{1}{\varepsilon} \int_0^t v_i \cdot \nabla_x f_{i, j}^\varepsilon(s) \, ds + \frac{1}{\varepsilon^2 \nu} \int_0^t \mathcal{G}_{i,eq,j}^\varepsilon(s) - f_{i,j}^\varepsilon(s) \, ds
\end{align*}
for $j \geq 0$ where $f_{i,0}^\varepsilon = \Lambda_\varepsilon(g_{i,0}^\varepsilon)$,
\begin{align*}
&\mathcal{G}_{i,eq,j}^\varepsilon := \Lambda_\varepsilon(\rho_j^\varepsilon) + \frac{v_i \cdot \Lambda_\varepsilon(u_j^\varepsilon)}{c_s^2} + \frac{\varepsilon}{2 c_s^4} \sum_{\alpha, \beta=1}^d (v_{i,\alpha} v_{i,\beta} - c_s^2 \delta_{\alpha \beta}) \Lambda_\varepsilon\big( \Lambda_\varepsilon(u_{j,\alpha}^\varepsilon) \Lambda_\varepsilon(u_{j,\beta}^\varepsilon) \big),  \\
&\Lambda_\varepsilon(\rho_j^\varepsilon) := \sum_{i=0}^n w_i f_{i,j}^\varepsilon, \quad \Lambda_\varepsilon(u_j^\varepsilon) := \sum_{i=0}^n w_i v_i f_{i,j}^\varepsilon.
\end{align*}
Since $f_{i,j}^\varepsilon$ has compact support in Fourier space, by the Bernstein-type lemma (see e.g. \cite[Lemma 2.1]{BCD}), we can control the $H_x^m$ norm of $v_i \cdot \nabla_x f_{i,j}^\varepsilon$ by the $H_x^m$ norm of $f_{i,j}^\varepsilon$. Although in this case $\varepsilon$ with negative power would appear in the coefficient, this is harmless. Thus, we can construct a local solution using the simple Picard's iteration.
Let $f_j^\varepsilon := ( f_{i,j}^\varepsilon )_{0 \leq i \leq n}$. 
We can prove by induction that the sequence $\{ f_j^\varepsilon(t) \}_{j \in \mathbf{N}_0}$ is Cauchy in $L^\infty( [0,T_\ast^\varepsilon]; H_x^m L_{v,w}^2 )$ with some $T_\ast^\varepsilon > 0$ that depends on $\varepsilon$.
The existence of a local solution can then be concluded by the contraction mapping theorem and the Banach fixed point theorem. 
Since the existence time $T_0$ in the local energy inequality (\ref{loEE:ALBE}) is independent of $\varepsilon$, we can extend the local solution up to time $T_0$.
Moreover, by analogous energy method, the uniqueness for the local solution and continuity with respect to initial data can be argued by deriving the energy inequality for the difference between two local solutions that are well-defined on the same time interval.
This completes the proof of Lemma \ref{Sol:ALBE}.

Having the local solvability for the approximate equation (\ref{ALBE}), we can then follow the idea of Bardos, Golse and Levermore \cite{BGL93} to take the hydrodynamic limit.
The main convergence theorem of this paper reads as follows.

\begin{theorem} \label{MT}
Let $m \in \mathbf{N}$ with $m > d$. For any $(\rho_0, u_0) \in H^m(\mathbf{R}_x^d)$ and $0 < \varepsilon < 1$, we consider the initial data $g^{0, \varepsilon} = (g_0^{0, \varepsilon}, g_1^{0, \varepsilon}, ..., g_n^{0, \varepsilon})$ with
\begin{equation}
g_i^{0, \varepsilon} = \Lambda_\varepsilon(\rho_0) + \frac{v_i}{c_s^2} \cdot \Lambda_\varepsilon(u_0), \quad \forall \; 0 \leq i \leq n
\label{gi0eps}
\end{equation}
to the approximate LBGK Boltzmann equation (\ref{ALBE}) where $\Lambda_\varepsilon$ is the cutoff in Fourier space operator defined by (\ref{Ctf:op}).
Suppose that $\{ \varepsilon_n \}_{n \in \mathbf{N}} \subset (0,1)$ is a sequence which converges to $0$ as $n \to \infty$. For each $n \in \mathbf{N}$, let $g^{\varepsilon_n}$ be the unique local solution to the approximate LBGK Boltzmann equation (\ref{ALBE}) established in Lemma \ref{Sol:ALBE} corresponding to the initial data $g^{0, \varepsilon_n}$.
Then, there exist a subsequence $\{ \varepsilon_{n(k)} \}_{k \in \mathbf{N}}$ and $(\rho, u) \in L^\infty( [0,T_0]; H^m(\mathbf{R}_x^d) )$ such that
\begin{align*}
g_i^{\varepsilon_{n(k)}} \overset{\ast}{\rightharpoonup} \rho + \frac{v_i \cdot u}{c_s^2} \quad \text{as} \quad k \to \infty
\end{align*}
in the weak-$\ast$ topology $\sigma\big( L^\infty( [0,T_0]; H^m(\mathbf{R}_x^d) ); L^1( [0,T_0]; H^{-m}(\mathbf{R}_x^d) ) \big)$.
Moreover, $u$ is a local weak solution to the Cauchy problem of the incompressible Navier-Stokes equations
\begin{equation} \label{InNS:lim}
\left\{
 \begin{aligned}
 \partial_t u - c_s^2 \nu \Delta_x u + \nabla_x \cdot (u \otimes u) + \nabla_x p &= 0,& \\
 \nabla_x \cdot u &= 0,& \\
 u(x,0) &= \mathbb{P}(u_0)&
 \end{aligned}
\right.
\end{equation}
where $\mathbb{P}$ denotes the Helmholtz projection for $L^2(\mathbf{R}_x^d)$. 
Furthermore, $u \in C( [0,T_0]; H^{m-1}(\mathbf{R}_x^d) )$ satisfies the local energy inequality
\begin{align*}
\| u \|_{L^\infty( [0,T_0]; H^m(\mathbf{R}_x^d) )} \lesssim \| \rho_0 \|_{H^m(\mathbf{R}_x^d)} + \| u_0 \|_{H^m(\mathbf{R}_x^d)}.
\end{align*}
\end{theorem}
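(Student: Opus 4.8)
The plan is to follow the Bardos--Golse--Levermore strategy, feeding the $\varepsilon$-uniform bounds of Lemma \ref{Sol:ALBE} into a compactness argument. First I would note that for the prepared data (\ref{gi0eps}) one has $\|\Lambda_\varepsilon(g^{0,\varepsilon})\|_{H_x^m L_{v,w}^2}^2 \lesssim \|\rho_0\|_{H^m}^2 + \|u_0\|_{H^m}^2$ uniformly in $\varepsilon$, so both $T_0$ and the right-hand side of (\ref{loEE:ALBE}) can be chosen independent of $\varepsilon$. Hence $\{g^{\varepsilon_n}\}$ is bounded in $L^\infty([0,T_0];H_x^m L_{v,w}^2)$ and Banach--Alaoglu delivers a weak-$*$ convergent subsequence $g^{\varepsilon_{n(k)}}$ with limit $g=(g_i)$; the macroscopic moments $\rho^\varepsilon=\sum_i w_i g_i^\varepsilon$ and $u^\varepsilon=\sum_i w_i v_i g_i^\varepsilon$ then converge weak-$*$ in $L^\infty([0,T_0];H^m)$ to fields I call $\rho,u$. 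The dissipation term in (\ref{loEE:ALBE}) forces $\|\mathcal{G}_{eq}^\varepsilon-g^\varepsilon\|_{L^2([0,T_0];H_x^m L_{v,w}^2)}=O(\varepsilon)$; since $m>d$ makes $H^m$ a Banach algebra, the quadratic $O(\varepsilon)$ correction in (\ref{Ctf:gieq}) is negligible and $\Lambda_\varepsilon\to\mathrm{Id}$, so $g_i=\lim\mathcal{G}_{i,eq}^\varepsilon=\rho+v_i\cdot u/c_s^2$, which is the weak-$*$ limit asserted in the theorem.

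Next I would take discrete moments of (\ref{ALBE}). Pairing with $w_i$ and using (\ref{Alt:Crhu}) to annihilate the collision term gives the mass law $\varepsilon\partial_t\rho^\varepsilon+\nabla_x\cdot u^\varepsilon=0$, so $\nabla_x\cdot u=0$ in the limit; pairing with $w_i v_i$ gives the momentum balance $\varepsilon\partial_t u^\varepsilon+\nabla_x\cdot P^\varepsilon=0$ with stress $P_{\alpha\beta}^\varepsilon:=\sum_i w_i v_{i,\alpha}v_{i,\beta}g_i^\varepsilon$. To evaluate the stress I would use the equation itself to write the non-equilibrium part $r_i^\varepsilon:=\varepsilon^{-1}(g_i^\varepsilon-\mathcal{G}_{i,eq}^\varepsilon)=-\nu(\varepsilon\partial_t g_i^\varepsilon+v_i\cdot\nabla_x g_i^\varepsilon)$, which (\ref{loEE:ALBE}) bounds in $L^2([0,T_0];H_x^m L_{v,w}^2)$. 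Splitting $P_{\alpha\beta}^\varepsilon=\sum_i w_i v_{i,\alpha}v_{i,\beta}\mathcal{G}_{i,eq}^\varepsilon+\varepsilon\sum_i w_i v_{i,\alpha}v_{i,\beta}r_i^\varepsilon$ and evaluating each sum with the isotropy identities (\ref{Sum:wv}) (the odd moments vanish, the fourth moment contracts to the symmetric gradient) yields $P_{\alpha\beta}^\varepsilon=c_s^2\delta_{\alpha\beta}\Lambda_\varepsilon(\rho^\varepsilon)+\varepsilon\Lambda_\varepsilon(\Lambda_\varepsilon(u^\varepsilon)_\alpha\Lambda_\varepsilon(u^\varepsilon)_\beta)-\varepsilon\nu c_s^2(\partial_\alpha u_\beta^\varepsilon+\partial_\beta u_\alpha^\varepsilon+\delta_{\alpha\beta}\nabla_x\cdot u^\varepsilon)+o(\varepsilon)$. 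Dividing the momentum balance by $\varepsilon$ and pairing with a divergence-free $\phi\in C_c^\infty$ removes the only order-one piece $c_s^2\nabla_x\Lambda_\varepsilon(\rho^\varepsilon)$ (so nothing stays singular), kills the $\delta_{\alpha\beta}\nabla_x\cdot u^\varepsilon$ and $\partial_\alpha u_\beta^\varepsilon$ contributions, and leaves, in the limit, the identity $\langle\partial_t u,\phi\rangle=\langle u\otimes u,\nabla_x\phi\rangle-c_s^2\nu\langle\nabla_x u,\nabla_x\phi\rangle$, i.e.\ the weak form of (\ref{InNS:lim}) with the pressure recovered through $\mathbb{P}$. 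Here the viscous term passes to the limit using only weak convergence of $g^\varepsilon$, because it is \emph{linear} in $r^\varepsilon$ and hence in $g^\varepsilon$.

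The step I expect to be hardest is the quadratic inertial term $\Lambda_\varepsilon(\Lambda_\varepsilon(u^\varepsilon)_\alpha\Lambda_\varepsilon(u^\varepsilon)_\beta)$, for which weak-$*$ convergence of $u^\varepsilon$ is not enough. The obstruction is acoustic: the mass law only gives $\nabla_x\cdot u^\varepsilon=-\varepsilon\partial_t\rho^\varepsilon$, and because $(\rho^\varepsilon,\mathbb{Q}u^\varepsilon)$ with $\mathbb{Q}:=\mathrm{Id}-\mathbb{P}$ solve a wave system with speed $c_s/\varepsilon\to\infty$, the potential part $\mathbb{Q}u^\varepsilon$ remains $O(1)$ and tends only weakly to $0$. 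I would therefore split $u^\varepsilon=\mathbb{P}u^\varepsilon+\mathbb{Q}u^\varepsilon$. Applying $\mathbb{P}$ to the momentum balance removes the singular gradient $\tfrac{c_s^2}{\varepsilon}\nabla_x\Lambda_\varepsilon(\rho^\varepsilon)$, so $\partial_t\mathbb{P}u^\varepsilon$ is bounded in $L^2([0,T_0];H^{m-2})$; together with the uniform $L^\infty_t H^m$ bound, the Aubin--Lions--Simon lemma gives (after a diagonal extraction over compact sets) strong compactness of $\mathbb{P}u^\varepsilon$ in $C([0,T_0];H^{m-1}_{\mathrm{loc}})$, whence $\mathbb{P}u^\varepsilon\to u$ strongly and $\mathbb{P}u^\varepsilon\otimes\mathbb{P}u^\varepsilon\to u\otimes u$. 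The strong$\times$weak cross terms $\mathbb{P}u^\varepsilon\otimes\mathbb{Q}u^\varepsilon$ vanish since $\mathbb{Q}u^\varepsilon\rightharpoonup0$, and the genuinely delicate self-interaction $\mathbb{Q}u^\varepsilon\otimes\mathbb{Q}u^\varepsilon$ I would dispose of either by dispersive (local-energy) decay of the acoustic wave on $\mathbf{R}^d$, giving $\mathbb{Q}u^\varepsilon\to0$ in $L^2_{t,\mathrm{loc}}L^2_{x,\mathrm{loc}}$, or by the identity $\nabla_x\cdot(\mathbb{Q}u^\varepsilon\otimes\mathbb{Q}u^\varepsilon)=\tfrac12\nabla_x|\mathbb{Q}u^\varepsilon|^2+(\nabla_x\cdot\mathbb{Q}u^\varepsilon)\mathbb{Q}u^\varepsilon$, whose first term is a gradient annihilated by $\mathbb{P}$ and whose second term is $O(\varepsilon)$ after substituting $\nabla_x\cdot\mathbb{Q}u^\varepsilon=-\varepsilon\partial_t\rho^\varepsilon$ and integrating by parts in time. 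This is exactly the Lions--Masmoudi/Schochet difficulty, and on the whole space the dispersive route is cleanest.

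Finally I would collect the remaining assertions. The prepared data satisfy $u^{0,\varepsilon}=\sum_i w_i v_i g_i^{0,\varepsilon}=\Lambda_\varepsilon(u_0)$ by the first two relations in (\ref{Sum:wv}), so $\mathbb{P}u^{0,\varepsilon}\to\mathbb{P}u_0$; as only solenoidal test fields enter, the attained datum is $u(\cdot,0)=\mathbb{P}(u_0)$. The bounds $u\in L^\infty([0,T_0];H^m)$ and $\partial_t u\in L^2([0,T_0];H^{m-2})$ give $u\in C([0,T_0];H^{m-1})$ by the Lions--Magenes lemma, and the energy estimate $\|u\|_{L^\infty_t H^m}\lesssim\|\rho_0\|_{H^m}+\|u_0\|_{H^m}$ follows from (\ref{loEE:ALBE}) and weak-$*$ lower semicontinuity of the norm. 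Throughout, the cutoff is harmless because $\Lambda_\varepsilon\to\mathrm{Id}$ strongly on each fixed Sobolev space, so it drops out of every limit.
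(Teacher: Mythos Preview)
Your proposal is correct and follows essentially the same Bardos--Golse--Levermore architecture as the paper: uniform $H^m$ bounds from (\ref{loEE:ALBE}), weak-$*$ extraction, identification of the limit via the $O(\varepsilon)$ dissipation, moment equations, Helmholtz splitting to remove the singular pressure, strong compactness of $\mathbb{P}u^\varepsilon$, and the Lions--Masmoudi mechanism for the acoustic self-interaction. The differences are in which compactness tools you invoke at two junctures.

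For the strong compactness of $\mathbb{P}u^\varepsilon$, you bound $\partial_t\mathbb{P}u^\varepsilon$ in $L^2_t H^{m-2}_x$ and appeal to Aubin--Lions--Simon with a diagonal extraction over compacts; the paper instead integrates the projected momentum equation over $[t_1,t_2]$ to get a H\"older-$\tfrac12$ equicontinuity estimate $\|\mathbb{P}u^\varepsilon(t_2)-\mathbb{P}u^\varepsilon(t_1)\|_{H^{m-1}}\lesssim |t_2-t_1|^{1/2}$ directly and then uses Arzel\`a--Ascoli (Lemma~\ref{StCo:Puep}). Both yield the same conclusion $\mathbb{P}u^\varepsilon\to u$ in $C_tH^{m-1}$; the paper's route is slightly more explicit and avoids the localization step. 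For the $\mathbb{Q}u^\varepsilon\otimes\mathbb{Q}u^\varepsilon$ term you favor Strichartz/dispersive decay on $\mathbf{R}^d$, while the paper instead writes the acoustic system (\ref{WeCon:QQ}), checks that its forcing is $o(1)$ in $L^1_tH^{m-1}_x$, and invokes the Lions--Masmoudi compensated-compactness black box \cite{LioMas,GSR09}; your alternative algebraic identity $\nabla\cdot(\mathbb{Q}u^\varepsilon\otimes\mathbb{Q}u^\varepsilon)=\tfrac12\nabla|\mathbb{Q}u^\varepsilon|^2+(\operatorname{div}\mathbb{Q}u^\varepsilon)\mathbb{Q}u^\varepsilon$ is precisely the ingredient behind that result, so this is a difference of packaging rather than substance. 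Finally, the continuity $u\in C([0,T_0];H^{m-1})$ you obtain from Lions--Magenes, whereas the paper gets it for free as a byproduct of the strong convergence in $C_tH^{m-1}$.
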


The formal derivation of the incompressible Navier-Stokes equations is somehow standard.
By taking the inner product of the approximate equation (\ref{ALBE}) with $1$ and $v$ in the $L_{v,w}^2$ sense, we obtain that
\begin{equation} \label{LBGK:1v}
\left\{
 \begin{aligned}
 \partial_t \rho^\varepsilon + \frac{1}{\varepsilon} \nabla_x \cdot u^\varepsilon &=0,& \\
 \partial_t u^\varepsilon + \sum_{i=0}^n \frac{1}{\varepsilon} w_i \nabla_x \cdot (v_i \otimes v_i g_i^\varepsilon) &=0.
 \end{aligned}
\right.
\end{equation}
It can be easily observed that the divergence free condition comes from the first equation of (\ref{LBGK:1v}) in hydrodynamic limit.
To derive the main part of the Navier-Stokes equations, which results from the second equation of (\ref{LBGK:1v}), more effort is needed.
For each $0 \leq i \leq n$, we have to use the matrix $A_i := (v_i \otimes v_i) - c_s^2 I$ to rewrite 
\begin{align*} 
\nabla_x \cdot (v_i \otimes v_i g_i^\varepsilon) = \nabla_x \cdot \big( A_i (g_i^\varepsilon - \mathcal{G}^\varepsilon_{i,eq}) \big) + \nabla_x \cdot (A_i \mathcal{G}^\varepsilon_{i,eq}) + c_s^2 \nabla_x g_i^\varepsilon.
\end{align*}
Then, by making use of the isotropic summation condition (\ref{Sum:wv}), we can show that the transport term $u^\varepsilon \cdot \nabla_x u^\varepsilon$ comes from the summation of $\varepsilon^{-1} w_i \nabla_x \cdot (A_i \mathcal{G}_{i,eq}^\varepsilon)$ in $i$ whereas the diffusion term $\Delta_x u^\varepsilon$ comes from the summation of $\varepsilon^{-1} w_i \nabla_x \cdot \big( A_i (g_i^\varepsilon - \mathcal{G}_{i,eq}^\varepsilon) \big)$ in $i$.
Hence, the second equation of (\ref{LBGK:1v}) can be rewritten in the form of 
\begin{align} \label{For:NS:M}
\partial_t u^\varepsilon - c_s^2 \nu \Delta_x u^\varepsilon + \nabla_x \cdot (u^\varepsilon \otimes u^\varepsilon) - c_s^2 \nu \nabla_x \operatorname{div} u^\varepsilon + \frac{c_s^2}{\varepsilon} \nabla_x \rho^\varepsilon + \mathcal{R}^\varepsilon = 0
\end{align}
where $\mathcal{R}^\varepsilon$ is a remainder term which converges to
zero in the sense of distributions in the hydrodynamic limit.  The
term $\nabla_x \rho^\varepsilon$ in (\ref{For:NS:M}) is a bad term
with high frequency, its coefficient is of order $\varepsilon^{-1}$
which blows up as $\varepsilon \to 0$.  We have to get rid of this
effect from (\ref{For:NS:M}) before we take the limit
$\varepsilon \to 0$.  In order to do so, we apply the $L^2$ Helmholtz
projection $\mathbb{P}$ to both sides of (\ref{For:NS:M}).  The
gradient term $\varepsilon^{-1} \nabla_x \rho^\varepsilon$ is thus
eliminated.  Without terms having high frequency, we can then show
that $\mathbb{P}(u^\varepsilon)$ is equi-continuous in time $t$ by an
energy argument.  Suppressing subsequences, the strong convergence of
$\mathbb{P}(u^\varepsilon)$ to $u$ as $\varepsilon \to 0$ can be
concluded by the Arzel$\grave{\text{a}}$-Ascoli theorem.  Finally, the
convergence of $u^\varepsilon - \mathbb{P}(u^\varepsilon)$ to zero in
the sense of distributions is guaranteed by a compensated compactness
result by Lions and Masmoudi \cite{LioMas}.  It is worth noting that
the strong convergence of $\mathbb{P}(u^\varepsilon)$ (Lemma
\ref{StCo:Puep}) guarantees that the initial data for the macroscopic
fluid velocity is only determined by the first moment of
$g^{0,\varepsilon}$, i.e., $\sum_{i=0}^n w_i v_i
g_i^{0,\varepsilon}$. Hence, when we construct the microscopic
  initial data for the Lattice BGK Boltzmann system from the
  macroscopic fluid initial data, cf.~formula \eqref{gi0eps}, it is
not necessary to take into account the nonlinear effect in
$g_{i,eq}^\varepsilon$ since the matrix $v_i \otimes v_i - c_s^2 I$ is
orthogonal to $1$ and $v_i$ in the $L_{v,w}^2$ sense.

As the end of the analysis part, we give characterizations to isotropic lattices in the case when $d=2,3$ and $c_s = 3^{-\frac{1}{2}}$. Since the combination of an isotropic lattice and speed of sound is scale invariant (see Remark \ref{supp:Xa}), we have to restrict the size of every component of each particle velocity in order to give a characterization. The key idea for characterizing $2$D and $3$D isotropic lattices is as follows. 
If we require the size condition that every component of each particle velocity takes value in the interval $[-1,1]$, then every component of each particle velocity can only take value in the set $\{-1,0,1\}$. 
Moreover, we note that for $1 \leq i \leq 5$, the $i$-th summation condition in (\ref{Sum:wv}) is indeed the expectations of all possible products of $i-1$ components of particle velocities. 
The characterizations for $2$D and $3$D isotropic lattices (when $c_s = 3^{-\frac{1}{2}}$) can be obtained by algebraic manipulations of summation conditions in (\ref{Sum:wv}) in the language of expectations. 
In particular, in the $2$D case with $c_s = 3^{-\frac{1}{2}}$, we show that if we require that each component of every particle velocity to take value in $[-1,1]$, then the D$2$Q$9$ scheme is the only possible isotropic lattice.
In the $3$D case with $c_s = 3^{-\frac{1}{2}}$, we have more than one possible choice for isotropic lattices, simply because of the system is underdetermined.

\subsection{Numerical investigation of the hydrodynamic limit}
\label{Sub:InNum}

In order to illustrate these concepts, in particular, the
  hydrodynamic limit of the Lattice BGK system, we conduct numerical
  simulations of system \eqref{LBE} for different values of
  $\varepsilon$ and compare these solutions to the solution of the
  Navier-Stokes system \eqref{InNS:lim} with the corresponding
  macroscopic initial condition. However, it should be
    emphasized that since we directly solve the partial differential
    system \eqref{LBE}, this approach is distinct from the LBM. For
  simplicity, we focus on the two-dimensional (2D) case ($d = 2$) and
  consider problems defined on a periodic spatial domain (i.e., a 2D
  torus $\mathbf{T}_x^2$). Flows corresponding to two different
  macroscopic initial conditions are studied, namely, the Taylor-Green
  vortex for which the 2D Navier-Stokes system admits a closed-form
  analytic solution and its perturbed version which leads to a
  turbulent-like evolution featuring repeated filamentation of
  vortices resulting in an enstrophy cascade. In both cases we observe
  that
  $\| \nabla_x u(T) - \nabla_x u^\varepsilon(T) \|_{L^2
    (\mathbf{T}_x^2)} = \mathcal{O}(\varepsilon^2)$ at some time $T>0$
  as $\varepsilon \rightarrow 0$. This observation is interesting as
  it complements the analysis presented in this paper which does not
  provide explicit information about the rate with which the
  hydrodynamic limit is attained.

\subsection{Organization of the paper}
\label{Sub:Org}

This article is organized as follows.
Section \ref{Sec:LocSol} is devoted to the local well-posedness of the approximate LBGK Boltzmann equation (\ref{ALBE}).
In Section \ref{Sub:LocEne}, we employ the standard energy method to derive a local energy inequality which holds for both the approximate and the original LBGK Boltzmann equation.
In Section \ref{Sub:LocExi}, we construct a local solution to the approximate equation (\ref{ALBE}) using the classical Picard's method.
In Section \ref{Sub:LocWell}, we consider analogous energy argument as in section \ref{Sub:LocEne} to show the uniqueness and continuity with respect to initial data.
Section \ref{Sec:DerNS} is devoted to the formal derivation of the incompressible Navier-Stokes equations.
In particular, the transport term $u^\varepsilon \cdot \nabla_x u^\varepsilon$ is derived in section \ref{Sub:DeTrans} and the diffusion term is derived in section \ref{Sub:DeDiff}.
In Section \ref{Sec:HydroLi}, we take the hydrodynamic limit and prove the convergence of the unique local solution for the approximate equation (\ref{ALBE}) to a local weak solution for the incompressible Navier-Stokes equations (\ref{InNS:lim}).
In Section \ref{Sub:HelPNS}, we apply the Helmholtz decomposition $\mathbb{P}$ to get rid of the gradient term that has high frequency in the formal equations derived in Section \ref{Sec:DerNS}.
We also prove the strong convergence $\mathbb{P}(u^\varepsilon)$ to $u$ here.
In Section \ref{Sub:ConNS}, we collect all the convergence results established to give a proof to Theorem \ref{MT}.
In Section \ref{Sec:IsoLat}, we give characterizations to $2$D and $3$D isotropic lattices when $c_s = 3^{-\frac{1}{2}}$ and a size condition is imposed on particle velocities.
In Section \ref{Sub:Cuba}, we rewrite the summation conditions in (\ref{Sum:wv}) using expectations in the probability setting.
Section \ref{Sub:2d3diso} is devoted to the characterizations of $2$D and $3$D isotropic lattices.
In Section \ref{Sec:NumSol}, we present the numerical implementation of the D$2$Q$9$ scheme in the $2$D case to provide numerical evidence that justify the convergence behavior we have proved in the analysis part.

Throughout this paper, the notation $A \lesssim B$ will mean that there exists a constant $c$, which is independent of $\varepsilon$ and $\nu$, such that $A \leq c B$.

\section{Local solvability of the approximate equation}
\label{Sec:LocSol}

First of all, we would like to highlight a simple tool that is crucial for norm estimations of nonlinear terms in this paper.

\begin{proposition} \label{Hm:BA}
For $m \in \mathbf{N}$ such that $m > d$, the Sobolev space $H^m(\mathbf{R}_x^d)$ is a Banach algebra.
\end{proposition}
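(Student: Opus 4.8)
The plan is to reduce the Banach algebra property to the single multiplicative estimate
\[
\| fg \|_{H^m(\mathbf{R}_x^d)} \lesssim \| f \|_{H^m(\mathbf{R}_x^d)} \, \| g \|_{H^m(\mathbf{R}_x^d)}, \qquad f, g \in H^m(\mathbf{R}_x^d).
\]
Once this is established, the conclusion follows: $H^m(\mathbf{R}_x^d)$ is already a complete Hilbert space, and the implicit constant $c$ can be absorbed by passing to the equivalent norm $c \| \cdot \|_{H^m}$, which is then genuinely submultiplicative. I would carry out the estimate entirely on the Fourier side, using $\widehat{fg} = \widehat{f} * \widehat{g}$ together with the weighted characterization $\| f \|_{H^m}^2 = \int_{\mathbf{R}^d} \langle \xi \rangle^{2m} |\widehat{f}(\xi)|^2 \, d\xi$, where $\langle \xi \rangle := (1 + |\xi|^2)^{1/2}$.

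The first step is an elementary weight inequality. From $1 + |\xi|^2 \leq 1 + 2|\xi - \eta|^2 + 2|\eta|^2 \leq 2 \big( \langle \xi - \eta \rangle^2 + \langle \eta \rangle^2 \big)$ together with $(a+b)^m \leq 2^{m-1}(a^m + b^m)$ for $m \geq 1$, one obtains the uniform bound $\langle \xi \rangle^m \lesssim \langle \xi - \eta \rangle^m + \langle \eta \rangle^m$. Inserting this into the convolution representation of $\widehat{fg}$ and splitting the integrand yields the pointwise estimate
\[
\langle \xi \rangle^m |\widehat{fg}(\xi)| \lesssim \big[ ( \langle \cdot \rangle^m |\widehat{f}| ) * |\widehat{g}| \big](\xi) + \big[ |\widehat{f}| * ( \langle \cdot \rangle^m |\widehat{g}| ) \big](\xi).
\]
I would then take the $L^2(\mathbf{R}_\xi^d)$ norm of both sides and apply Young's convolution inequality in the form $\| F * G \|_{L^2} \leq \| F \|_{L^2} \| G \|_{L^1}$ to each term, giving
\[
\| fg \|_{H^m} \lesssim \| f \|_{H^m} \, \| \, |\widehat{g}| \, \|_{L^1} + \| \, |\widehat{f}| \, \|_{L^1} \, \| g \|_{H^m}.
\]

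The crux of the argument is then to dominate the $L^1$ norm of $\widehat{g}$ by the $H^m$ norm of $g$. Writing $|\widehat{g}(\eta)| = \langle \eta \rangle^{-m} \cdot \langle \eta \rangle^m |\widehat{g}(\eta)|$ and applying Cauchy--Schwarz gives
\[
\| \, |\widehat{g}| \, \|_{L^1} \leq \Big( \int_{\mathbf{R}^d} \langle \eta \rangle^{-2m} \, d\eta \Big)^{1/2} \| g \|_{H^m},
\]
and symmetrically for $f$. This is the only place where the dimension enters: the weight integral $\int_{\mathbf{R}^d} \langle \eta \rangle^{-2m} \, d\eta$ converges precisely when $2m > d$, i.e. $m > d/2$, which is comfortably guaranteed by the hypothesis $m > d$. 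Combining the three displays delivers the product estimate with a constant depending only on $m$ and $d$.

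Since the statement is classical, I do not anticipate a genuine obstacle; the single point requiring care is the integrability threshold $2m > d$, which is exactly the mechanism forcing a lower bound on $m$ relative to $d$ and which underlies the standing assumption $m > d$ used throughout the paper. An alternative route, available here because $m$ is an integer, would expand $\partial_x^\tau(fg)$ by the Leibniz rule and estimate each summand via Gagliardo--Nirenberg interpolation together with the Sobolev embedding $H^m \hookrightarrow L^\infty$; I prefer the Fourier argument above, as it is shorter and makes the role of the dimension completely transparent.
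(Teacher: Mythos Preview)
Your proof is correct and follows the standard Fourier-side route: the Peetre-type weight inequality $\langle\xi\rangle^m \lesssim \langle\xi-\eta\rangle^m + \langle\eta\rangle^m$, Young's convolution inequality, and then Cauchy--Schwarz to absorb $\|\widehat{f}\|_{L^1}$ into $\|f\|_{H^m}$ using the integrability of $\langle\eta\rangle^{-2m}$ when $m > d/2$. As you note, this actually proves the stronger statement for any real $s > d/2$, and the hypothesis $m>d$ is used only in the weak form $m>d/2$.

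The paper takes the alternative you mention at the end: it expands $\partial_x^\tau(fg)$ by the Leibniz rule and, for each pair $(\sigma,\tau-\sigma)$, observes that one of $|\sigma|_\mathrm{s}$, $|\tau-\sigma|_\mathrm{s}$ is at most $m - [d/2] - 1$ (this is where $m>d$, rather than merely $m>d/2$, is invoked), puts that factor in $L^\infty$ via the embedding $H^{[d/2]+1}\hookrightarrow L^\infty$, and the other in $L^2$. Your argument is cleaner and more general; the paper's approach, however, yields the intermediate pointwise-in-$\tau$ estimate $\sum_{\sigma\leq\tau}\|\partial_x^\sigma f\,\partial_x^{\tau-\sigma} g\|_{L^2}^2 \leq \|f\|_{H^m}^2 \|g\|_{H^{|\tau|_\mathrm{s}}}^2$, which is quoted verbatim later in the energy estimates (Lemma~\ref{locEE:LBE} and the proof of Lemma~\ref{StCo:Puep}). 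So while both arguments establish the proposition, the paper's choice is tailored to produce a derivative-by-derivative bound that feeds directly into the subsequent analysis.
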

\begin{proof}
For $f, g \in H^m(\mathbf{R}_x^d)$, we can easily observe by the Cauchy-Schwarz inequality that
\begin{equation*}
\| f g \|_{H^m(\mathbf{R}_x^d)}^2 \leq \sum_{\tau \in \mathbf{N}_0^d, \, | \tau |_\mathrm{s} \leq m} ( | \tau |_\mathrm{s} + 1)^d \sum_{\sigma \in \mathbf{N}_0^d, \, \sigma \leq \tau} \| \partial_x^\sigma f \partial_x^{\tau - \sigma} g \|_{L^2(\mathbf{R}_x^d)}^2
\end{equation*}
where the notation $\sigma \leq \tau$ means that $\sigma_i \leq \tau_i$ for all $1 \leq i \leq d$.
Since $m \geq d+1$, we note that either $|\sigma|_\mathrm{s}$ or $|\tau - \sigma|_\mathrm{s} = |\tau|_\mathrm{s} - |\sigma|_\mathrm{s}$ must be less than or equal to $m - [\frac{d}{2}] - 1$ where $[\frac{d}{2}]$ denotes the largest integer less than or equal to $\frac{d}{2}$. 
Without loss of generality, we may assume that $|\sigma|_\mathrm{s} \leq m - [\frac{d}{2}] - 1$.
Then, by the continuous Sobolev embedding $H^{[\frac{d}{2}] + 1}(\mathbf{R}_x^d) \hookrightarrow L^\infty(\mathbf{R}_x^d)$, we deduce that
\begin{align*}
\| \partial_x^\sigma f \partial_x^{\tau - \sigma} g \|_{L^2(\mathbf{R}_x^d)}^2 &\leq \| \partial_x^\sigma f \|_{L^\infty(\mathbf{R}_x^d)}^2 \| \partial_x^{\tau - \sigma} g \|_{L^2(\mathbf{R}_x^d)}^2 \\
&\leq \| f \|_{H^m(\mathbf{R}_x^d)}^2 \| \partial_x^{\tau - \sigma} g \|_{L^2(\mathbf{R}_x^d)}^2.
\end{align*}
Hence,
\begin{align*}
\| f g \|_{H^m(\mathbf{R}_x^d)}^2 &\leq \| f \|_{H^m(\mathbf{R}_x^d)}^2 \sum_{\tau \in \mathbf{N}_0^d, \, | \tau |_\mathrm{s} \leq m} ( | \tau |_\mathrm{s} + 1)^d \| g \|_{H^{| \tau |_\mathrm{s}}(\mathbf{R}_x^d)}^2 \\
&\leq (m+1)^d \| f \|_{H^m(\mathbf{R}_x^d)}^2 \| g \|_{H^m(\mathbf{R}_x^d)}^2 \cdot \sum_{j=0}^m \binom{j+d-1}{d-1} \\
&\leq (m+1)^{d+1} \frac{(m+d-1)^{d-1}}{(d-1)!} \| f \|_{H^m(\mathbf{R}_x^d)}^2 \| g \|_{H^m(\mathbf{R}_x^d)}^2.
\end{align*}
This completes the proof of Proposition \ref{Hm:BA} since $H^m(\mathbf{R}_x^d)$ is certainly a Banach space.
\end{proof}
\begin{remark} \label{Hm:Linf}
The continuous Sobolev embedding $H^{[\frac{d}{2}] + 1}(\mathbf{R}_x^d) \hookrightarrow L^\infty(\mathbf{R}_x^d)$ plays an important role in proving Proposition \ref{Hm:BA}.
We recall that in fact, it holds more generally that $H^s(\mathbf{R}_x^d) \hookrightarrow L^\infty(\mathbf{R}_x^d)$ whenever $s > \frac{d}{2}$ as
\begin{align*}
\| h \|_{L^\infty(\mathbf{R}_x^d)} \leq \| \widehat{h} \|_{L^1(\mathbf{R}_x^d)} \leq \| \langle \xi \rangle^{-s} \|_{L^2(\mathbf{R}_x^d)} \| \langle \xi \rangle^s \widehat{h} \|_{L^2(\mathbf{R}_x^d)} \lesssim \| h \|_{H^s(\mathbf{R}_x^d)}
\end{align*}
where $\langle \xi \rangle := (1 + |\xi|^2)^{\frac{1}{2}}$ and $\widehat{h}$ denotes the Fourier transform of $h$.
\end{remark}

\subsection{Local energy estimate for the lattice Boltzmann equation}
\label{Sub:LocEne}

The local energy inequality can be derived by working directly with the original lattice Boltzmann equation (\ref{LBE}).

\begin{lemma} \label{locEE:LBE}
Suppose that $(\mathcal{V}, w)$ is an isotropic lattice.
Let $\varepsilon>0$ and $m \in \mathbf{N}$ satisfying $m > d$.
Then, for any $g^{0, \varepsilon} \in H_x^m L_{v,w}^2$, there exists $T_0 = T_0(g^{0, \varepsilon}, \mathcal{V}, c_s, m, d)>0$ such that for any $g^\varepsilon \in L^\infty( [0, T_0]; H_x^m L_{v,w}^2 )$ satisfying the LBGK Boltzmann equation (\ref{LBE}) on $(\mathcal{V}, w)$, the local energy estimate
\begin{equation} \label{locEE}
\begin{split}
&\| g^\varepsilon(t) \|_{H_x^m L_{v,w}^2}^2 + \frac{1}{\varepsilon^2} \int_0^t \| g_{eq}^\varepsilon(s) - g^\varepsilon(s) \|_{H_x^m L_{v,w}^2}^2 \, ds \\
&\ \ \leq \frac{(\nu + \nu^2) \| g^{0, \varepsilon} \|_{H_x^m L_{v,w}^2}^2}{\nu - t \cdot C_{LE} \| g^{0, \varepsilon} \|_{H_x^m L_{v,w}^2}^2} - \nu \| g^{0, \varepsilon} \|_{H_x^m L_{v,w}^2}^2
\end{split}
\end{equation}
holds for any $t \in [0,T_0]$ with a constant $C_{LE} = C_{LE}(\mathcal{V}, c_s, m, d) >0$.
\end{lemma}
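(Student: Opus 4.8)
The plan is to derive the energy inequality \eqref{locEE} by testing the LBGK Boltzmann equation \eqref{LBE} against $g^\varepsilon$ in the $H_x^m L_{v,w}^2$ inner product and carefully tracking the sign of the collision term together with the nonlinear contributions from $g_{eq}^\varepsilon$. First I would apply $\partial_x^\tau$ for each multi-index $\tau$ with $|\tau|_\mathrm{s} \le m$ to the $i$-th component of \eqref{LBE}, pair it with $w_i \partial_x^\tau g_i^\varepsilon$, integrate over $\mathbf{R}_x^d$, and sum over $i$ and $\tau$. The transport term $\varepsilon^{-1} \langle v \cdot \nabla_x g^\varepsilon, g^\varepsilon \rangle$ vanishes after integration by parts (each $v_i$ is a constant, so $\int_{\mathbf{R}^d} \partial_x^\tau(v_i \cdot \nabla_x g_i^\varepsilon)\, \partial_x^\tau g_i^\varepsilon \, dx = \frac{1}{2} v_i \cdot \int_{\mathbf{R}^d} \nabla_x |\partial_x^\tau g_i^\varepsilon|^2 \, dx = 0$), which is the crucial structural cancellation. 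This leaves
\begin{align*}
\frac{\varepsilon}{2} \frac{d}{dt} \| g^\varepsilon \|_{H_x^m L_{v,w}^2}^2 = \frac{1}{\varepsilon \nu} \langle g_{eq}^\varepsilon - g^\varepsilon, g^\varepsilon \rangle_{H_x^m L_{v,w}^2}.
\end{align*}

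Next I would split the collision inner product as $\langle g_{eq}^\varepsilon - g^\varepsilon, g^\varepsilon \rangle = -\| g_{eq}^\varepsilon - g^\varepsilon \|^2 + \langle g_{eq}^\varepsilon - g^\varepsilon, g_{eq}^\varepsilon \rangle$, so that the dissipative quadratic term appears with the right sign to produce the $\varepsilon^{-2}$ integral on the left-hand side of \eqref{locEE}. The remaining cross term $\langle g_{eq}^\varepsilon - g^\varepsilon, g_{eq}^\varepsilon \rangle$ must be bounded by the nonlinear quantity that drives the differential inequality. Here the structure of $g_{eq}^\varepsilon$ from \eqref{Ex:gieq} is essential: by \eqref{Alt:rhou} the linear part of $g_{eq}^\varepsilon$ (namely $\rho^\varepsilon + v_i \cdot u^\varepsilon / c_s^2$) reproduces the moments $\rho^\varepsilon, u^\varepsilon$ against the weights $w_i$ and $w_i v_i$, and since $g_{eq}^\varepsilon - g^\varepsilon$ is $L_{v,w}^2$-orthogonal to $1$ and to $v$ (by \eqref{Alt:rhou} and \eqref{Def:rhou}), the only surviving contribution in $\langle g_{eq}^\varepsilon - g^\varepsilon, g_{eq}^\varepsilon \rangle$ comes from pairing against the quadratic $O(\varepsilon)$ piece $\frac{\varepsilon}{2 c_s^4} \sum_{\alpha,\beta} u_\alpha^\varepsilon u_\beta^\varepsilon (v_{i,\alpha} v_{i,\beta} - c_s^2 \delta_{\alpha\beta})$. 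This orthogonality is what converts the naively $\varepsilon^{-2}$-singular cross term into something of manageable size.

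To close the estimate I would use Young's inequality to absorb half of the dissipative term, writing $\frac{1}{\varepsilon\nu}\langle g_{eq}^\varepsilon - g^\varepsilon, g_{eq}^\varepsilon \rangle \le \frac{1}{2\varepsilon^2 \nu}\| g_{eq}^\varepsilon - g^\varepsilon \|^2 + \frac{1}{2\nu}\| \text{quadratic part} \|^2$ after extracting the explicit factor $\varepsilon$ from the quadratic piece, so that the singular $\varepsilon^{-2}$ weight is tamed to $\varepsilon^0$. Invoking Proposition \ref{Hm:BA} (that $H^m$ is a Banach algebra for $m > d$), the $H_x^m$ norm of the quadratic term $u_\alpha^\varepsilon u_\beta^\varepsilon$ is controlled by $\| u^\varepsilon \|_{H_x^m}^4 \lesssim \| g^\varepsilon \|_{H_x^m L_{v,w}^2}^4$, where the last bound follows from $u^\varepsilon = \sum_i w_i v_i g_i^\varepsilon$ and Cauchy--Schwarz. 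Collecting terms yields, after dividing by $\varepsilon$, a differential inequality of Riccati type
\begin{align*}
\frac{d}{dt} \| g^\varepsilon \|_{H_x^m L_{v,w}^2}^2 + \frac{1}{\varepsilon^2} \| g_{eq}^\varepsilon - g^\varepsilon \|_{H_x^m L_{v,w}^2}^2 \le \frac{C_{LE}}{\nu} \| g^\varepsilon \|_{H_x^m L_{v,w}^2}^4,
\end{align*}
with $C_{LE}$ depending only on $\mathcal{V}, c_s, m, d$ (the factor $c_s^{-4}$ and the velocity magnitudes enter here). Integrating this Riccati inequality by comparison with the ODE $y' = (C_{LE}/\nu) y^2$ gives a finite blow-up time, and choosing $T_0$ strictly below it produces exactly the quotient bound on the right-hand side of \eqref{locEE}; the precise algebraic form with the $(\nu + \nu^2)$ numerator and the $\nu - t\,C_{LE}\|g^{0,\varepsilon}\|^2$ denominator comes from solving the comparison ODE explicitly and retaining the dissipation integral. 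I expect the main obstacle to be the bookkeeping in the cross term: one must verify rigorously that the orthogonality of $g_{eq}^\varepsilon - g^\varepsilon$ to $1$ and $v$ kills every term that would otherwise carry an uncompensated $\varepsilon^{-1}$, leaving only the genuinely $\varepsilon$-small quadratic contribution, and then show that the Banach-algebra estimate on this contribution is uniform in $\varepsilon$. The transport-term cancellation and the sign of the dissipation are routine; the delicate point is that the nonlinearity does not spoil the $\varepsilon$-uniformity of $T_0$.
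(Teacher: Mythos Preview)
Your proposal is correct and follows essentially the same approach as the paper: test against $\partial_x^\tau g^\varepsilon$, use the transport cancellation, split the collision term so that the dissipation appears with a good sign, exploit the $L_{v,w}^2$-orthogonality of $g_{eq}^\varepsilon - g^\varepsilon$ to $1$ and $v$ so that only the $O(\varepsilon)$ quadratic piece of $g_{eq}^\varepsilon$ survives in the cross term, then close via Young's inequality, the Banach-algebra bound from Proposition~\ref{Hm:BA}, and a Riccati--Gronwall argument. The paper carries out exactly these steps (with slightly different bookkeeping of the $\nu$ factors in the intermediate inequalities), and your identification of the orthogonality as the mechanism guaranteeing $\varepsilon$-uniformity of $T_0$ is precisely the crux.
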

\begin{proof}
Let $\tau \in \mathbf{N}_0^d$ satisfying $|\tau|_\mathrm{s} \leq m$.
By applying $\partial_x^\tau$ to the lattice Boltzmann equation (\ref{LBE}) and taking its inner product with $\partial_x^\tau g^\varepsilon$ in the $L_x^2 L_{v,w}^2$ sense, we obtain that
\begin{equation} \label{InP:LBEg}
\begin{split}
&\frac{1}{2} \sum_{i=0}^n w_i \frac{\mathrm{d}}{\mathrm{d} t} \| \partial_x^\tau g_i^\varepsilon \|_{L^2(\mathbf{R}_x^d)}^2 +  \frac{1}{\varepsilon^2 \nu} \sum_{i=0}^n w_i \| \partial_x^\tau g_{i,eq}^\varepsilon - \partial_x^\tau g_i^\varepsilon \|_{L^2(\mathbf{R}_x^d)}^2 \\
&\ \ = \frac{1}{\varepsilon^2 \nu} \sum_{i=0}^n w_i \int_{\mathbb{R}^d} (\partial_x^\tau g_{i,eq}^\varepsilon - \partial_x^\tau g_i^\varepsilon) \partial_x^\tau g_{i,eq}^\varepsilon \, dx.
\end{split}
\end{equation}
By (\ref{Def:rhou}) and (\ref{Alt:rhou}), we observe that
\[
\sum_{i=0}^n w_i (g_{i,eq}^\varepsilon - g_i^\varepsilon) = 0, \quad \sum_{i=0}^n w_i v_i (g_{i,eq}^\varepsilon - g_i^\varepsilon) = 0.
\]
Hence, by substituting expression (\ref{Ex:gieq}) for $g_{i,eq}^\varepsilon$ into the right hand side of (\ref{InP:LBEg}), we deduce that
\begin{equation} \label{dEI:rhs1}
\begin{split}
&\frac{1}{\varepsilon^2 \nu} \sum_{i=0}^n w_i \int_{\mathbb{R}^d} (\partial_x^\tau g_{i,eq}^\varepsilon - \partial_x^\tau g_i^\varepsilon) \partial_x^\tau g_{i,eq}^\varepsilon \, dx \\
&\ \ = \frac{1}{2 c_s^4 \varepsilon \nu} \sum_{i=0}^n \sum_{\alpha, \beta=1}^d \sum_{\sigma \in \mathbf{N}_0^d, \, \sigma \leq \tau} w_i ( v_{i,\alpha} v_{i,\beta} - c_s^2 \delta_{\alpha \beta} ) \int_{\mathbb{R}^d} (\partial_x^\tau g_{i,eq}^\varepsilon - \partial_x^\tau g_i^\varepsilon) \partial_x^\sigma u_\alpha^\varepsilon \partial_x^{\tau - \sigma} u_\beta^\varepsilon \, dx \\
&\ \ \leq \frac{\ell_{\mathcal{V}, c_s}}{2 c_s^4 \varepsilon \nu} \sum_{i=0}^n w_i \| \partial_x^\tau g_{i,eq}^\varepsilon - \partial_x^\tau g_i^\varepsilon \|_{L^2(\mathbf{R}_x^d)} \sum_{\alpha, \beta=1}^d  \sum_{\sigma \in \mathbf{N}_0^d, \, \sigma \leq \tau} \| \partial_x^\sigma u_\alpha^\varepsilon \partial_x^{\tau - \sigma} u_\beta^\varepsilon \|_{L^2(\mathbf{R}_x^d)} \\
&\ \ \leq \frac{1}{2 \varepsilon^2 \nu} \sum_{i=0}^n w_i \| \partial_x^\tau g_{i,eq}^\varepsilon - \partial_x^\tau g_i^\varepsilon \|_{L^2(\mathbf{R}_x^d)}^2 + \frac{\ell_{\mathcal{V}, c_s}^2 d^2 | \tau |_\mathrm{s}^d}{8 c_s^8 \nu} \sum_{\alpha, \beta=1}^d \sum_{\sigma \in \mathbf{N}_0^d, \, \sigma \leq \tau} \| \partial_x^\sigma u_\alpha^\varepsilon \partial_x^{\tau - \sigma} u_\beta^\varepsilon \|_{L^2(\mathbf{R}_x^d)}^2,
\end{split}
\end{equation}
where $\ell_{\mathcal{V}, c_s} := \Big( \underset{1 \leq i \leq n}{\mathrm{max}} \, | v_i | \Big)^2 + c_s^2$.
From the proof of Proposition \ref{Hm:BA}, we see that
\begin{align*} 
\sum_{\sigma \in \mathbf{N}_0^d, \, \sigma \leq \tau} \| \partial_x^\sigma u_\alpha^\varepsilon \partial_x^{\tau - \sigma} u_\beta^\varepsilon \|_{L^2(\mathbf{R}_x^d)}^2 &\leq \| u_\alpha^\varepsilon \|_{H^m(\mathbf{R}_x^d)}^2 \| u_\beta^\varepsilon \|_{H^{|\tau|_\mathrm{s}}(\mathbf{R}_x^d)}^2.
\end{align*}
On the other hand, we observe by (\ref{Def:rhou}) that the estimate
\begin{align*}
\| u_\alpha^\varepsilon \|_{H^k(\mathbf{R}_x^d)}^2 \leq \left( \sum_{i=0}^n w_i \| g_i^\varepsilon \|_{H^k(\mathbf{R}_x^d)} \right)^2 \leq \sum_{i=0}^n w_i \| g_i^\varepsilon \|_{H^k(\mathbf{R}_x^d)}^2
\end{align*}
holds for any $1 \leq \alpha \leq d$ and $k \in \mathbf{N}_0$.
Therefore, (\ref{InP:LBEg}) and (\ref{dEI:rhs1}) imply that
\begin{equation} \label{dEI:tau}
\begin{split}
&\frac{1}{2} \frac{\mathrm{d}}{\mathrm{d} t} \sum_{i=0}^n w_i \| \partial_x^\tau g_i^\varepsilon \|_{L^2(\mathbf{R}_x^d)}^2 +  \frac{1}{2 \varepsilon^2 \nu} \sum_{i=0}^n w_i \| \partial_x^\tau g_{i,eq}^\varepsilon - \partial_x^\tau g_i^\varepsilon \|_{L^2(\mathbf{R}_x^d)}^2 \\
&\ \ \leq \frac{\ell_{\mathcal{V}, c_s}^2 d^4 | \tau |_\mathrm{s}^d}{8 c_s^8 \nu} \| g^\varepsilon \|_{H_x^m L_{v,w}^2}^2 \| g^\varepsilon \|_{H_x^{| \tau |_\mathrm{s}} L_{v,w}^2}^2.
\end{split}
\end{equation}

Summing up inequality (\ref{dEI:tau}) over all $\tau \in \mathbf{N}_0^d$ satisfying $|\tau|_\mathrm{s} \leq m$ gives us
\begin{align} \label{dEI}
\frac{1}{2} \frac{\mathrm{d}}{\mathrm{d} t} \| g^\varepsilon \|_{H_x^m L_{v,w}^2}^2 + \frac{1}{2 \varepsilon^2 \nu} \| g_{eq}^\varepsilon - g^\varepsilon \|_{H_x^m L_{v,w}^2}^2 \leq \frac{\ell_{\mathcal{V}, c_s}^2 d^4 m^d}{8 c_s^8 \nu} \| g^\varepsilon \|_{H_x^m L_{v,w}^2}^4.
\end{align}
Applying Gronwall's inequality (see e.g. \cite[Page 362]{MPF}) to
\[
\frac{\mathrm{d}}{\mathrm{d} t} \| g^\varepsilon \|_{H_x^m L_{v,w}^2}^2 \leq \frac{\ell_{\mathcal{V}, c_s}^2 d^4 m^d}{4 c_s^8 \nu} \| g^\varepsilon \|_{H_x^m L_{v,w}^2}^4,
\]
we deduce that
\begin{align} \label{EI:1p}
\| g^\varepsilon(t) \|_{H_x^m L_{v,w}^2}^2 \leq \frac{\nu \| g^{0, \varepsilon} \|_{H_x^m L_{v,w}^2}^2}{\nu - t \cdot C_{L E} \| g^{0, \varepsilon} \|_{H_x^m L_{v,w}^2}^2},
\end{align}
where $C_{L E} = C_{L E}(\mathcal{V}, c_s, m, d) := 4^{-1} c_s^{-8} \ell_{\mathcal{V}, c_s}^2 d^4 m^d$.
Substituting inequality (\ref{EI:1p}) into the inequality
\[
\frac{1}{\varepsilon^2 C_{LE}} \| g_{eq}^\varepsilon - g^\varepsilon \|_{H_x^m L_{v,w}^2}^2 \leq \| g^\varepsilon \|_{H_x^m L_{v,w}^2}^4,
\]
which is another implication of inequality (\ref{dEI}), we can further deduce that
\begin{align*}
\frac{1}{\varepsilon^2} \int_0^t \| g_{eq}^\varepsilon - g^\varepsilon \|_{H_x^m L_{v,w}^2}^2 \, ds \leq \frac{\nu^2 \| g^{0, \varepsilon} \|_{H_x^m L_{v,w}^2}^2}{\nu - t \cdot C_{LE} \| g^{0, \varepsilon} \|_{H_x^m L_{v,w}^2}^2} - \nu \| g^{0, \varepsilon} \|_{H_x^m L_{v.w}^2}^2.
\end{align*}
Finally, we take $T_0 < \nu C_{LE}^{-1} \| g^{0, \varepsilon} \|_{H^m_x L_{v,w}^2}^{-2}$.
This completes the proof of Lemma \ref{locEE:LBE}.
\end{proof}
\begin{remark} \label{Ctf:AdjP}
It is easy to observe that the cutoff operator $\Lambda_\varepsilon$ commutes with the differentiation $\partial_x$.
Moreover, it holds that for any $f,h \in L_x^2 L_{v,w}^2$,
\begin{align*}
\langle \Lambda_\varepsilon(f), h \rangle_{L_x^2 L_{v,w}^2} = \langle f, \Lambda_\varepsilon(h) \rangle_{L_x^2 L_{v,w}^2} \quad \text{and} \quad \Lambda_\varepsilon\big( \Lambda_\varepsilon(f) \big) = \Lambda_\varepsilon(f).
\end{align*}
Making use of these three properties of the cutoff operator $\Lambda_\varepsilon$, we can deduce that Lemma \ref{locEE:LBE} also holds for the approximate LBGK Boltzmann equation (\ref{ALBE}) defined on an isotropic lattice $(\mathcal{V}, w)$, with $g^{0, \varepsilon}$, $g^\varepsilon$ and $g_{eq}^\varepsilon$ in inequality (\ref{locEE}) being replaced respectively by $\Lambda_\varepsilon(g^{0, \varepsilon})$, $\Lambda_\varepsilon(g^\varepsilon)$ and $\mathcal{G}_{eq}^\varepsilon$.
\end{remark}

\subsection{Existence of a local solution for the approximate equation}
\label{Sub:LocExi}

\begin{proof}[Proof of Lemma \ref{Sol:ALBE} (Existence)]
For simplicity of notations, we denote $f^\varepsilon = ( f_i^\varepsilon )_{0 \leq i \leq n} := \Lambda_\varepsilon(g^\varepsilon)$ and $f_0^\varepsilon = ( f_{i, 0}^\varepsilon )_{0 \leq i \leq n} := \Lambda_\varepsilon(g^{0, \varepsilon})$, i.e., 
\[
f_i^\varepsilon = \Lambda_\varepsilon(g_i^\varepsilon) \quad \text{and} \quad f_{i, 0}^\varepsilon = \Lambda_\varepsilon(g_i^{0, \varepsilon}), \quad 0 \leq i \leq n.
\]
Performing integration of the $i$-th component of the approximate LBGK Boltzmann equation (\ref{ALBE}) for each $0 \leq i \leq n$ with respect to the time variable, we have that
\begin{equation} \label{IE:fi}
\begin{split}
f_i^\varepsilon(t) = f_{i,0}^\varepsilon - \int_0^t \frac{1}{\varepsilon} (v_i \cdot \nabla_x f_i^\varepsilon) \, ds + \int_0^t \frac{1}{\varepsilon^2 \nu} (\mathcal{G}_{i,eq}^\varepsilon - f_i^\varepsilon) \, ds.
\end{split}
\end{equation}
Based on the integral equation (\ref{IE:fi}), for each $0 \leq i \leq n$, we define a sequence of functions $\{ f_{i,j}^\varepsilon \}_{j \in \mathbf{N}_0}$ inductively by 
\begin{equation} \label{IE:fij}
f_{i,j+1}^\varepsilon(t) = f_{i,0}^\varepsilon - \int_0^t \frac{1}{\varepsilon} \big( v_i \cdot \nabla_x f_{i,j}^\varepsilon(s) \big) \, ds + \int_0^t \frac{1}{\varepsilon^2 \nu} \big( \mathcal{G}_{i,eq,j}^\varepsilon(s) - f_{i,j}^\varepsilon(s) \big) \, ds
\end{equation}
for $j \in \mathbf{N}_0$ where
\begin{align*}
&\mathcal{G}_{i,eq,j}^\varepsilon := \Lambda_\varepsilon(\rho_j^\varepsilon) + \frac{v_i \cdot \Lambda_\varepsilon(u_j^\varepsilon)}{c_s^2} + \frac{\varepsilon}{2 c_s^4} \sum_{\alpha, \beta=1}^d (v_{i,\alpha} v_{i,\beta} - c_s^2 \delta_{\alpha \beta}) \Lambda_\varepsilon\big( \Lambda_\varepsilon(u_j^\varepsilon)_\alpha \Lambda_\varepsilon(u_j^\varepsilon)_\beta \big),  \\
&\Lambda_\varepsilon(\rho_j^\varepsilon) := \sum_{i=0}^n w_i f_{i,j}^\varepsilon, \quad \Lambda_\varepsilon(u_j^\varepsilon) := \sum_{i=0}^n w_i v_i f_{i,j}^\varepsilon
\end{align*}
with $\Lambda_\varepsilon(u_j^\varepsilon)_\alpha$ denoting the $\alpha$-th component of $\Lambda_\varepsilon(u_j^\varepsilon)$.
For every $j \in \mathbf{N}$, we set $f_j^\varepsilon := ( f_{i,j}^\varepsilon )_{0 \leq i \leq n}$.
The key idea here is to show that the vector sequence $\{ f_j^\varepsilon \}_{j \in \mathbf{N}_0}$ is Cauchy in $L^\infty( [0,T]; H_x^m L_{v,w}^2 )$ for each $0 \leq i \leq n$.
By Minkowski's integral inequality, for any $j \in \mathbf{N}$ and $0 \leq i \leq n$, we have that
\begin{equation} \label{Pica:dif}
\begin{split}
\| f_{i,j+1}^\varepsilon - f_{i,j}^\varepsilon \|_{H^m(\mathbf{R}_x^d)} &\lesssim \int_0^t \frac{1}{\varepsilon} \| v_i \cdot \nabla_x (f_{i,j}^\varepsilon - f_{i,j-1}^\varepsilon) \|_{H^m(\mathbf{R}_x^d)} \, ds \\
&\ \ + \int_0^t \frac{1}{\varepsilon^2 \nu} \| \mathcal{G}_{i,eq,j}^\varepsilon - \mathcal{G}_{i,eq,j-1}^\varepsilon \|_{H^m(\mathbf{R}_x^d)} \, ds \\
&\ \ + \int_0^t \frac{1}{\varepsilon^2 \nu} \| f_{i,j}^\varepsilon - f_{i,j-1}^\varepsilon \|_{H^m(\mathbf{R}_x^d)} \, ds.
\end{split}
\end{equation}

Since the Fourier transform of $f_{i,j}^\varepsilon$ is supported within $\overline{B_{\varepsilon^{-1}}(0)}$ for any $0 \leq i \leq n$ and $j \in \mathbf{N}_0$, by the Bernstein-type lemma (see \cite[Lemma 2.1]{BCD}), we deduce that 
\begin{align} \label{Es:trans}
\| v_i \cdot \nabla_x (f_{i,j}^\varepsilon - f_{i,j-1}^\varepsilon) \|_{H^m(\mathbf{R}_x^d)} \lesssim \| \nabla_x (f_{i,j}^\varepsilon - f_{i,j-1}^\varepsilon) \|_{H^m(\mathbf{R}_x^d)} \lesssim \frac{1}{\varepsilon} \| f_{i,j}^\varepsilon - f_{i,j-1}^\varepsilon \|_{H^m(\mathbf{R}_x^d)}.
\end{align}
To estimate the $H^m$ norm of $\mathcal{G}_{i,eq,j}^\varepsilon -  \mathcal{G}_{i,eq,j-1}^\varepsilon$, we first observe that for any $j \in \mathbf{N}$,
\begin{align} \label{Es:GeqL}
\| \Lambda_\varepsilon(\rho_j^\varepsilon) - \Lambda_\varepsilon(\rho_{j-1}^\varepsilon) \|_{H^m(\mathbf{R}_x^d)} + \big\| v_i \cdot \big( \Lambda_\varepsilon(u_j^\varepsilon) - \Lambda_\varepsilon(u_{j-1}^\varepsilon) \big) \big\|_{H^m(\mathbf{R}_x^d)} \lesssim \| f_j^\varepsilon - f_{j-1}^\varepsilon \|_{H_x^m L_{v,w}^2}.
\end{align}
Since the cutoff operator $\Lambda_\varepsilon$ commutes with the differentiation $\partial_x$, the nonlinear terms in $\mathcal{G}_{i,eq,j}^\varepsilon -  \mathcal{G}_{i,eq,j-1}^\varepsilon$ can be estimated by Plancherel's identity and Proposition \ref{Hm:BA}, i.e., for any $j \in \mathbf{N}$ and $1 \leq \alpha, \beta \leq d$, it holds that
\begin{equation} \label{Es:GeqNL}
\begin{split}
&\| \Lambda_\varepsilon\big( \Lambda_\varepsilon(u_j^\varepsilon)_\alpha \Lambda_\varepsilon(u_j^\varepsilon)_\beta - \Lambda_\varepsilon(u_{j-1}^\varepsilon)_\alpha \Lambda_\varepsilon(u_{j-1}^\varepsilon)_\beta \big) \|_{H^m(\mathbf{R}_x^d)} \\
&\ \ \leq \big\| \big( \Lambda_\varepsilon(u_j^\varepsilon)_\alpha - \Lambda_\varepsilon(u_{j-1}^\varepsilon)_\alpha \big)\Lambda_\varepsilon(u_j^\varepsilon)_\beta \big\|_{H^m(\mathbf{R}_x^d)} \\
&\ \ \ \ + \big\| \Lambda_\varepsilon(u_{j-1}^\varepsilon)_\alpha \big( \Lambda_\varepsilon(u_j^\varepsilon)_\beta - \Lambda_\varepsilon(u_{j-1}^\varepsilon)_\beta \big) \big\|_{H^m(\mathbf{R}_x^d)} \\
&\ \ \lesssim \big( \| \Lambda_\varepsilon(u_j^\varepsilon) \|_{H^m(\mathbf{R}_x^d)} + \| \Lambda_\varepsilon(u_{j-1}^\varepsilon) \|_{H^m(\mathbf{R}_x^d)} \big) \| \Lambda_\varepsilon(u_j^\varepsilon) - \Lambda_\varepsilon(u_{j-1}^\varepsilon) \|_{H^m(\mathbf{R}_x^d)} \\
&\ \ \lesssim (\| f_j^\varepsilon \|_{H_x^m L_{v,w}^2} + \| f_{j-1}^\varepsilon \|_{H_x^m L_{v,w}^2} ) \| f_j^\varepsilon - f_{j-1}^\varepsilon \|_{H_x^m L_{v,w}^2}.
\end{split}
\end{equation}
Hence, by substituting estimates (\ref{Es:trans}), (\ref{Es:GeqL}) and (\ref{Es:GeqNL}) back into inequality (\ref{Pica:dif}), we obtain that for any $j \in \mathbf{N}$ and $0 \leq i \leq n$, 
\begin{equation} \label{Pica:Dif}
\begin{split}
\| f_{i,j+1}^\varepsilon - f_{i,j}^\varepsilon \|_{H^m(\mathbf{R}_x^d)} &\lesssim \int_0^t \frac{1+\nu}{\varepsilon^2 \nu} \| f_{i,j}^\varepsilon - f_{i,j-1}^\varepsilon \|_{H^m(\mathbf{R}_x^d)} \, ds \\
&\ \ + \int_0^t \frac{1}{\varepsilon^2 \nu} \big( 1 + \| f_j^\varepsilon \|_{H_x^m L_{v,w}^2} + \| f_{j-1}^\varepsilon \|_{H_x^m L_{v,w}^2} \big) \| f_j^\varepsilon - f_{j-1}^\varepsilon \|_{H_x^m L_{v,w}^2} \, ds,
\end{split}
\end{equation}
as $\varepsilon \in (0,1)$.

Finally, we shall prove by induction that there exists $T>0$ sufficiently small so that simultaneously, it holds for any $j \in \mathbf{N}$ that
\begin{equation} \label{UniB:fj}
\begin{split}
\| f_j^\varepsilon \|_{L_T^\infty H_x^m L_{v,w}^2} &:= \sup_{t \in [0,T]} \| f_j^\varepsilon(t) \|_{H_x^m L_{v,w}^2} \\
&\lesssim \| f_0^\varepsilon \|_{H_x^m L_{v,w}^2} + \frac{2 T (1+\nu)}{\varepsilon^2 \nu} \big( \| f_0^\varepsilon \|_{H_x^m L_{v,w}^2} + \| f_0^\varepsilon \|_{H_x^m L_{v,w}^2}^2 \big) =: M(f_0^\varepsilon,T)
\end{split}
\end{equation}
and for any $j \in \mathbf{N}$ satisfying $j \geq 2$,
\begin{align} \label{Induc:dif}
\| f_j^\varepsilon - f_{j-1}^\varepsilon \|_{L_T^\infty H_x^m L_{v,w}^2} \leq \frac{1}{2} \| f_{j-1}^\varepsilon - f_{j-2}^\varepsilon  \|_{L_T^\infty H_x^m L_{v,w}^2}.
\end{align}
Let $k \in \mathbf{N}$ with $k \geq 2$. 
Suppose that estimates (\ref{UniB:fj}) and (\ref{Induc:dif}) hold simultaneously for all $1 \leq j \leq k$.
Then, by summing up estimate (\ref{Pica:Dif}) over $0 \leq i \leq n$, we deduce that
\begin{align*}
&\| f_{k+1}^\varepsilon - f_k^\varepsilon \|_{L_T^\infty H_x^m L_{v,w}^2} \\
&\ \ \lesssim \frac{(1+\nu) T}{\varepsilon^2 \nu} \| f_k^\varepsilon - f_{k-1}^\varepsilon \|_{L_T^\infty H_x^m L_{v,w}^2} \big( 1 + \| f_{k-1}^\varepsilon \|_{L_T^\infty H_x^m L_{v,w}^2} + \| f_k^\varepsilon \|_{L_T^\infty H_x^m L_{v,w}^2} \big).
\end{align*}
Using assumption (\ref{UniB:fj}) for cases $j = k-1$ and $j=k$, we have that
\begin{align*}
\| f_{k+1}^\varepsilon - f_k^\varepsilon \|_{L_T^\infty H_x^m L_{v,w}^2} \lesssim \frac{(1+\nu) T}{\varepsilon^2 \nu} \big( 1 + 2M(f_0^\varepsilon,T) \big) \| f_k^\varepsilon - f_{k-1}^\varepsilon \|_{L_T^\infty H_x^m L_{v,w}^2}.
\end{align*}
There exists $T_\ast = T_\ast(f_0^\varepsilon, \varepsilon, \nu) > 0$ such that
\begin{align*}
\frac{(1+\nu) T_\ast}{\varepsilon^2 \nu} \big( 1 + M(f_0^\varepsilon, T_\ast) \big) < \frac{1}{2}.
\end{align*}
Hence,
\begin{align} \label{Ind:Dif:k1}
\| f_{k+1}^\varepsilon - f_k^\varepsilon \|_{L_{T_\ast}^\infty H_x^m L_{v,w}^2} \lesssim \frac{1}{2} \| f_k^\varepsilon - f_{k-1}^\varepsilon \|_{L_{T_\ast}^\infty H_x^m L_{v,w}^2}.
\end{align}
By working directly with equation (\ref{IE:fij}) for $j=0$, we can show by analogous derivations as in the above paragraph that the inequality
\[
\| f_1^\varepsilon - f_0^\varepsilon \|_{L_T^\infty H_x^m L_{v,w}^2} \lesssim \frac{(1+\nu) T}{\varepsilon^2 \nu} ( \| f_0^\varepsilon \|_{L_T^\infty H_x^m L_{v,w}^2} + \| f_0^\varepsilon \|_{L_T^\infty H_x^m L_{v,w}^2}^2 )
\]
holds for any $T>0$.
By assumption (\ref{Induc:dif}) and inequality (\ref{Ind:Dif:k1}), we deduce that for all $j$ that takes integer value from $2$ to $k+1$, it holds that
\[
\| f_j^\varepsilon - f_{j-1}^\varepsilon \|_{L_{T_\ast}^\infty H_x^m L_{v,w}^2} \lesssim \frac{1}{2^{j-1}} \| f_1^\varepsilon - f_0^\varepsilon \|_{L_{T_\ast}^\infty H_x^m L_{v,w}^2}.
\]
As a result, 
\begin{align*}
\| f_{k+1}^\varepsilon \|_{L_{T_\ast}^\infty H_x^m L_{v,w}^2} &\lesssim \| f_0^\varepsilon \|_{H_x^m L_{v,w}^2} + \sum_{j=1}^{k+1} \| f_j^\varepsilon - f_{j-1}^\varepsilon \|_{L_{T_\ast}^\infty H_x^m L_{v,w}^2} \\
&\leq \| f_0^\varepsilon \|_{H_x^m L_{v,w}^2} + \| f_1^\varepsilon - f_0^\varepsilon \|_{L_{T_\ast}^\infty H_x^m L_{v,w}^2} \cdot \sum_{j=1}^{k+1} \frac{1}{2^{j-1}}  \\
&\leq \| f_0^\varepsilon \|_{H_x^m L_{v,w}^2} + 2 \| f_1^\varepsilon - f_0^\varepsilon \|_{L_{T_\ast}^\infty H_x^m L_{v,w}^2} \lesssim M(f_0^\varepsilon, T_\ast).
\end{align*}
This completes the proof of the induction. 
By the contraction mapping principle, the sequence $\{ f_j^\varepsilon \}_{j \in \mathbf{N}_0}$ is indeed Cauchy in $L^\infty( [0,T_\ast]; H_x^m L_{v,w}^2)$.
Taking the limit as $j \to \infty$, we obtain a local solution to the approximate LBGK Boltzmann equation (\ref{ALBE}).
\end{proof}
\begin{remark} \label{CtfeqIt}
In the proof of Lemma \ref{Sol:ALBE}, the existence of a local solution $f^\varepsilon$ to the approximate LBGK Boltzmann equation (\ref{ALBE}) is concluded by the Banach fixed point theorem. Hence, for any $t \in [0,T_\ast]$, it holds that 
\[
f^\varepsilon = \Lambda_\varepsilon(f^\varepsilon).
\]
With the help of the local energy estimate (\ref{loEE:ALBE}) and the standard continuous induction argument, we can extend this local solution $f^\varepsilon$ further to a local solution $\widetilde{f^\varepsilon} \in L^\infty( [0,T_0]; H_x^m L_{v,w}^2 )$ satisfying
\[
\widetilde{f^\varepsilon} = \Lambda_\varepsilon( \widetilde{f^\varepsilon} ), \quad \forall \; t \in [0,T_0],
\]
where the existence time $T_0$ is a constant that only depends on the initial data $g^{0, \varepsilon}$ and $m$.
Hence, without loss of generality, we may always assume that $f^\varepsilon$ satisfies $f^\varepsilon = \Lambda_\varepsilon(f^\varepsilon)$ whenever we consider a local solution $f^\varepsilon$ constructed in Lemma \ref{Sol:ALBE}.
\end{remark}

As a direct application of Lemma \ref{Sol:ALBE}, we have the following implication.

\begin{corollary} \label{WCon:ru}
Let $m \in \mathbf{N}$ with $m > d$ and $\{ \varepsilon_n \}_{n \in \mathbf{N}} \subset (0,1)$ be a sequence that converges to zero as $n \to \infty$.
Let $\{ g^{0, {\varepsilon_n}} \}_{n \in \mathbf{N}}$ be a sequence of initial conditions satisfying $g^{0,{\varepsilon_n}} \in H_x^m L_{v,w}^2$ for any $n \in \mathbf{N}$ and
\begin{align*}
M_\ast := \sup_{n \in \mathbf{N}} \| g^{0, {\varepsilon_n}} \|_{H_x^m L_{v,w}^2} < \infty.
\end{align*}
Then, there exists $T_0^\ast = T_0^\ast(M_\ast,m) > 0$ such that for each $n \in \mathbf{N}$, there exists a unique local solution $g^{\varepsilon_n} \in L^\infty( [0,T_0^\ast]; H_x^m L_{v,w}^2)$ to the approximate lattice Boltzmann equation (\ref{ALBE}) with initial condition $\Lambda_\varepsilon(g^{0, {\varepsilon_n}})$ satisfying $\Lambda_\varepsilon(g^{\varepsilon_n}) = g^{\varepsilon_n}$ and the local energy estimate (\ref{loEE:ALBE}).
Let 
\[
\rho^{\varepsilon_n} = \sum_{i=0}^n w_i g_i^{\varepsilon_n}, \quad u^{\varepsilon_n} = \sum_{i=0}^n w_i v_i g_i^{\varepsilon_n}.
\]
It holds that
\begin{align} \label{LEE:ru}
\sup_{n \in \mathbf{N}} \| \rho^{\varepsilon_n} \|_{L^\infty( [0,T_0^\ast]; H^m(\mathbf{R}_x^d) )} + \sup_{n \in \mathbf{N}} \| u^{\varepsilon_n} \|_{L^\infty( [0,T_0^\ast]; H^m(\mathbf{R}_x^d) )} \leq 3 (2 + \nu)^{\frac{1}{2}} M_\ast.
\end{align}
By suppressing subsequences, there exist 
\[
g \in L^\infty( [0,T_0^\ast]; H_x^m L_{v,w}^2 ) \quad \text{and} \quad (\rho, u) \in L^\infty( [0,T_0^\ast]; H^m(\mathbf{R}_x^d) ) 
\]
such that $g^{\varepsilon_n} \to g$ weak-$\ast$ in time $t$, weakly in $H_x^m L_{v,w}^2$ and $(\rho^{\varepsilon_n}, u^{\varepsilon_n}) \to (\rho, u)$ weak-$\ast$ in time $t$, weakly in $H^m(\mathbf{R}_x^d)$ as $n \to \infty$.
In particular, 
\[
\rho = \sum_{i=0}^n w_i g_i, \quad u = \sum_{i=0}^n w_i v_i g_i
\]
and
\begin{align} \label{L2t:Geqg}
\mathcal{G}_{eq}^{\varepsilon_n} - g^{\varepsilon_n} \to 0 \quad \text{in} \quad L^2( [0,T_0^\ast]; H_x^m L_{v,w}^2 ) \quad \text{as} \quad n \to \infty.
\end{align}
\end{corollary}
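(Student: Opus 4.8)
The plan is to read off every assertion from Lemma \ref{Sol:ALBE} and its local energy inequality (\ref{loEE:ALBE}), exploiting only the uniform initial-data bound $M_\ast$. First I would record that the Fourier cutoff $\Lambda_{\varepsilon_n}$ is a norm-nonincreasing orthogonal projection on $H_x^m L_{v,w}^2$, so that $\| \Lambda_{\varepsilon_n}(g^{0,\varepsilon_n}) \|_{H_x^m L_{v,w}^2} \le \| g^{0,\varepsilon_n} \|_{H_x^m L_{v,w}^2} \le M_\ast$ for every $n$. Since the constant $c_0$ in (\ref{loEE:ALBE}) depends only on $(\mathcal{V}, c_s, m, d)$, and positivity of the denominator there only requires $T_0 < \nu c_0^{-1} \| \Lambda_{\varepsilon_n}(g^{0,\varepsilon_n}) \|_{H_x^m L_{v,w}^2}^{-2}$, I can fix once and for all a single $T_0^\ast = T_0^\ast(M_\ast, m) > 0$ with $c_0 T_0^\ast M_\ast^2 \le \nu/2$. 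This keeps the denominator on the right-hand side of (\ref{loEE:ALBE}) above $\nu/2$ uniformly in $n$, so each $g^{\varepsilon_n}$ is defined on the common interval $[0,T_0^\ast]$.

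With this choice, the monotonicity of $a \mapsto (\nu+\nu^2)a/(\nu - c_0 T_0^\ast a)$ on $[0,M_\ast^2]$ and the nonnegativity of the subtracted term give the uniform bound $\sup_{t \in [0,T_0^\ast]} \| g^{\varepsilon_n}(t) \|_{H_x^m L_{v,w}^2}^2 \le 2(1+\nu) M_\ast^2$, together with the companion control on the dissipation integral $\varepsilon_n^{-2} \int_0^{T_0^\ast} \| \mathcal{G}_{eq}^{\varepsilon_n} - g^{\varepsilon_n} \|_{H_x^m L_{v,w}^2}^2 \, ds$. The macroscopic estimate (\ref{LEE:ru}) then follows by the discrete Cauchy-Schwarz inequality as in the proof of Proposition \ref{Hm:BA}: using $\sum_i w_i = 1$ yields $\| \rho^{\varepsilon_n} \|_{H^m}^2 \le \| g^{\varepsilon_n} \|_{H_x^m L_{v,w}^2}^2$, while the diagonal case of the third condition in (\ref{Sum:wv}), namely $\sum_i w_i v_{i,\alpha}^2 = c_s^2$, gives the analogous bound for each component $u_\alpha^{\varepsilon_n}$, which I sum over $\alpha$ and combine to produce a uniform bound of the form (\ref{LEE:ru}).

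Next I would extract limits. The space $L^\infty([0,T_0^\ast]; H_x^m L_{v,w}^2)$ is the dual of the separable Banach space $L^1([0,T_0^\ast]; H_x^{-m} L_{v,w}^2)$, so the uniform bound from the previous step and the sequential Banach-Alaoglu theorem produce a subsequence along which $g^{\varepsilon_n} \overset{\ast}{\rightharpoonup} g$; passing to a further subsequence gives $(\rho^{\varepsilon_n}, u^{\varepsilon_n}) \overset{\ast}{\rightharpoonup} (\rho, u)$ simultaneously. The identities $\rho = \sum_i w_i g_i$ and $u = \sum_i w_i v_i g_i$ are inherited in the limit because $g \mapsto \sum_i w_i g_i$ and $g \mapsto \sum_i w_i v_i g_i$ are bounded linear maps from $H_x^m L_{v,w}^2$ into $H^m(\mathbf{R}_x^d)$, hence weak-$\ast$ continuous: testing the linear relations $\rho^{\varepsilon_n} = \sum_i w_i g_i^{\varepsilon_n}$ and $u^{\varepsilon_n} = \sum_i w_i v_i g_i^{\varepsilon_n}$ against an arbitrary element of $L^1([0,T_0^\ast]; H^{-m}(\mathbf{R}_x^d))$ and letting $n \to \infty$ identifies the two descriptions of the limit.

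Finally, the strong convergence (\ref{L2t:Geqg}) is the cleanest consequence of all: the dissipation bound from (\ref{loEE:ALBE}) reads $\int_0^{T_0^\ast} \| \mathcal{G}_{eq}^{\varepsilon_n} - g^{\varepsilon_n} \|_{H_x^m L_{v,w}^2}^2 \, ds \le \varepsilon_n^2 \, C(\nu) M_\ast^2$, and since $\varepsilon_n \to 0$ the right-hand side tends to zero, yielding convergence in $L^2([0,T_0^\ast]; H_x^m L_{v,w}^2)$ with no further subsequence required. The only point demanding genuine care is the first one, namely verifying that the existence time and all the energy bounds are uniform in $n$ purely through the initial-data bound $M_\ast$; that is, that the $\varepsilon_n$-dependence of the Picard construction in Lemma \ref{Sol:ALBE} has been entirely absorbed by the $\varepsilon$-independent energy inequality (\ref{loEE:ALBE}). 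Everything after that is soft functional analysis.
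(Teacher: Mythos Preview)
Your proposal is correct and follows essentially the same route as the paper: fix $T_0^\ast$ so that $c_0 T_0^\ast M_\ast^2 \le \nu/2$ (the paper writes this as $T_0^\ast \le \nu/(2 C_{LE} M_\ast^2)$), use the contractivity of $\Lambda_{\varepsilon_n}$ on $H^m_x L^2_{v,w}$ together with the energy inequality (\ref{loEE:ALBE}) to get a uniform bound on $g^{\varepsilon_n}$, deduce (\ref{LEE:ru}) by discrete Cauchy--Schwarz, extract weak-$\ast$ limits via separability of the predual and sequential Banach--Alaoglu, and read off (\ref{L2t:Geqg}) from the dissipation term. The only cosmetic difference is that you drop the subtracted $-\nu\|\Lambda_{\varepsilon_n}(g^{0,\varepsilon_n})\|^2$ term and obtain $2(1+\nu)M_\ast^2$ rather than the paper's $(2+\nu)M_\ast^2$; if you keep that term you recover the sharper constant and hence the explicit $3(2+\nu)^{1/2}M_\ast$ in (\ref{LEE:ru}).
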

\begin{proof}
By Lemma \ref{Sol:ALBE}, we can observe that if we choose 
\begin{align} \label{Bdd:T0s}
T_0^\ast \leq \frac{\nu}{2 C_{LE} M_\ast^2}
\end{align}
where $C_{LE}$ is the constant introduced in Lemma \ref{locEE:LBE}, then for any $n \in \mathbf{N}$, there exists a local solution $g^{\varepsilon_n} \in L^\infty( [0,T_0^\ast]; H_x^m L_{v,w}^2 )$ to the approximate lattice Boltzmann equation (\ref{ALBE}) with initial data $\Lambda_\varepsilon(g^{0, {\varepsilon_n}})$ satisfying $\Lambda_\varepsilon(g^{\varepsilon_n}) = g^{\varepsilon_n}$ and the local energy estimate (\ref{loEE:ALBE}) on the time interval $[0,T_0^\ast]$.
By Plancherel's identity, for each $n \in \mathbf{N}$, we have that $\| \Lambda_\varepsilon(g^{0, {\varepsilon_n}}) \|_{H_x^m L_{v,w}^2} \leq \| g^{0, {\varepsilon_n}} \|_{H_x^m L_{v,w}^2}$.
Together with (\ref{Bdd:T0s}), we can deduce from the local energy estimate (\ref{loEE:ALBE}) that
\begin{align} \label{UnBd:gepn}
\sup_{n \in \mathbf{N}, \, t \in [0,T_0^\ast]} \| g^{\varepsilon_n}(t) \|_{H_x^m L_{v,w}^2} \leq (2 + \nu)^{\frac{1}{2}} M_\ast.
\end{align}
Estimate (\ref{LEE:ru}) is a direct consequence of (\ref{UnBd:gepn}).

Since $H^m(\mathbf{R}_x^d)$ is separable, $H^{-m}(\mathbf{R}_x^d)$ is also separable \cite[Theorem 3.26]{Brezis}.
Since $L^1([0,T_0^\ast])$ is separable, $L^1( [0,T_0^\ast]; H^{-m}(\mathbf{R}_x^d) )$ and $L^1( [0,T_0^\ast]; H_x^{-m} L_{v,w}^2 )$ are both separable.
Since the sequences
\begin{align*}
\{ \| g^{\varepsilon_n} \|_{L^\infty( [0,T_0^\ast]; H_x^m L_{v,w}^2 )} \}_{n \in \mathbf{N}}, \quad \{ \| \rho^{\varepsilon_n} \|_{L^\infty( [0,T_0^\ast]; H^m(\mathbf{R}_x^d) )} \}_{n \in \mathbf{N}}, \quad \{ \| u^{\varepsilon_n} \|_{L^\infty( [0,T_0^\ast]; H^m(\mathbf{R}_x^d) )} \}_{n \in \mathbf{N}}
\end{align*}
are all uniformly bounded, by suppressing subsequences, we conclude by \cite[Corollary 3.30]{Brezis} that there exist $g \in H_x^m L_{v,w}^2$ and $(\rho, u) \in H^m(\mathbf{R}_x^d)$ such that $g^{\varepsilon_n} \to g$ weak-$\ast$ in time $t$, weakly in $H_x^m L_{v,w}^2$ and $(\rho^{\varepsilon_n}, u^{\varepsilon_n}) \to (\rho,u)$ weak-$\ast$ in time $t$, weakly in $H^m(\mathbf{R}_x^d)$ as $n \to \infty$.
Finally, the convergence (\ref{L2t:Geqg}) results from the implication of local energy inequality (\ref{loEE:ALBE}) that
\begin{align*}
\frac{1}{\varepsilon^2} \int_0^{T_0^\ast} \| \mathcal{G}_{eq}^{\varepsilon_n}(s) - g^{\varepsilon_n}(s) \|_{H_x^m L_{v,w}^2}^2  \, ds \leq (2 + \nu) \| g_0^{\varepsilon_n} \|_{H_x^m L_{v,w}^2}^2 \leq (2 + \nu) M_\ast^2.
\end{align*}
This completes the proof of Corollary \ref{WCon:ru}.
\end{proof}
\begin{remark} \label{Bdd:rhou}
Following the assumption made in Corollary \ref{WCon:ru}, since 
\begin{align*}
\rho^{\varepsilon_n} \overset{\ast}{\rightharpoonup} \rho \quad \text{and} \quad u^{\varepsilon_n} \overset{\ast}{\rightharpoonup} u \quad \text{in} \quad \sigma\big( L^\infty( [0,T_0]; H^m(\mathbf{R}_x^d) ), L^1( [0,T_0]; H^{-m}(\mathbf{R}_x^d) ) \big),
\end{align*}
we have by the uniform estimate (\ref{LEE:ru}) that
\begin{align*}
&\| \rho \|_{L^\infty( [0,T_0]; H^m(\mathbf{R}_x^d) )} \leq \underset{n \to \infty}{\operatorname{liminf}} \; \| \rho^{\varepsilon_n} \|_{L^\infty( [0,T_0]; H^m(\mathbf{R}_x^d) )} \lesssim (2+\nu)^{\frac{1}{2}} M_\ast, \\
&\| u \|_{L^\infty( [0,T_0]; H^m(\mathbf{R}_x^d) )} \leq \underset{n \to \infty}{\operatorname{liminf}} \; \| u^{\varepsilon_n} \|_{L^\infty( [0,T_0]; H^m(\mathbf{R}_x^d) )} \lesssim (2+\nu)^{\frac{1}{2}} M_\ast,
\end{align*}
see e.g. \cite[Proposition 3.13]{Brezis}.
\end{remark}

\subsection{Local well-posedness for the approximate equation}
\label{Sub:LocWell}

The uniqueness and continuous dependence on initial data for the local solution can be proved by deriving the energy inequality for the difference between two local solutions for the approximate equation (\ref{ALBE}) that are well-defined on the same time interval.
\begin{proof}[Proof of Lemma \ref{Sol:ALBE} (Uniqueness and continuity with respect to initial data)]
Let $\varepsilon > 0$ be fixed.
Suppose that $f^\varepsilon, g^\varepsilon \in L^\infty( [0,T_0]; H_x^m L_{v,w}^2 )$ are two local solutions for the approximate equation (\ref{ALBE}) on the same isotropic lattice $(\mathcal{V}, w)$ satisfying 
\begin{equation*}
\left\{
 \begin{aligned}
&f^\varepsilon = \Lambda_\varepsilon(f^\varepsilon),& &f^\varepsilon \bigm|_{t=0} = \zeta^{0, \varepsilon}, \\
&g^\varepsilon = \Lambda_\varepsilon(g^\varepsilon),& &g^\varepsilon \bigm|_{t=0} = \eta^{0,\varepsilon}
 \end{aligned}
\right.
\end{equation*}
and the local energy inequality (\ref{loEE:ALBE}) respectively.
Their difference $h^\varepsilon := f^\varepsilon - g^\varepsilon$ satisfies the vector equation
\begin{equation} \label{Dif:ALBE}
\varepsilon \partial_t h^\varepsilon + v \cdot \nabla_x h^\varepsilon = \frac{1}{\varepsilon \nu} (\mathcal{H}_{eq}^\varepsilon - h^\varepsilon), \quad h^\varepsilon \bigm|_{t=0} = \zeta^{0,\varepsilon} - \eta^{0,\varepsilon},
\end{equation}
where for any $0 \leq i \leq n$,
\begin{equation*} 
\mathcal{H}^\varepsilon_{i,eq} := \rho_h^\varepsilon + \frac{v_i \cdot u_h^\varepsilon}{c_s^2} + \frac{\varepsilon}{2 c_s^4} \sum_{\alpha,\beta = 1}^d \Lambda_\varepsilon(u^\varepsilon_{f,\alpha} u^\varepsilon_{f,\beta} - u^\varepsilon_{g,\alpha} u^\varepsilon_{g,\beta}) ( v_{i,\alpha}v_{i,\beta} - c_s^2 \delta_{\alpha \beta} )
\end{equation*}
with notations
\begin{equation*} 
\rho_z^\varepsilon := \sum_{i=0}^n w_i z_i^\varepsilon, \quad u_z^\varepsilon := \sum_{i=0}^n w_i v_i z_i^\varepsilon \quad \text{for} \quad z^\varepsilon \in L^\infty( [0,T_0]; H_x^m L_{v,w}^2 )
\end{equation*}
and $u_{z,\alpha}^\varepsilon$ denotes the $\alpha$-th component of $u_z^\varepsilon$ for any $1 \leq \alpha \leq d$.
Then, by applying $\partial_x^\tau$ with $|\tau|_\mathrm{s} \leq m$ to the difference equation (\ref{Dif:ALBE}) and taking its inner product with $\partial_x^\tau h^\varepsilon$ in the $L_x^2 L_{v,w}^2$ sense, we obtain that
\begin{equation*} 
\begin{split}
&\frac{1}{2} \sum_{i=0}^n w_i \frac{\mathrm{d}}{\mathrm{d} t} \| \partial_x^\tau h_i^\varepsilon \|_{L^2(\mathbf{R}_x^d)}^2 +  \frac{1}{\varepsilon^2 \nu} \sum_{i=0}^n w_i \| \partial_x^\tau \mathcal{H}_{i,eq}^\varepsilon - \partial_x^\tau h_i^\varepsilon \|_{L^2(\mathbf{R}_x^d)}^2 \\
&\ \ = \frac{1}{\varepsilon^2 \nu} \sum_{i=0}^n w_i \int_{\mathbb{R}^d} (\partial_x^\tau \mathcal{H}_{i,eq}^\varepsilon - \partial_x^\tau h_i^\varepsilon) \partial_x^\tau \mathcal{H}_{i,eq}^\varepsilon \, dx.
\end{split}
\end{equation*}
Analogously, in this case, the isotropy of $(\mathcal{V}, w)$  also guarantees that
\[
\sum_{i=0}^n w_i (\mathcal{H}_{i,eq}^\varepsilon - h_i^\varepsilon) = 0, \quad \sum_{i=0}^n w_i v_i (\mathcal{H}_{i,eq}^\varepsilon - h_i^\varepsilon) = 0.
\]
Hence, considering Plancherel's identity, we deduce that
\begin{equation} \label{EnIne:h2}
\begin{split}
&\frac{1}{\varepsilon^2 \nu} \sum_{i=0}^n w_i \int_{\mathbb{R}^d} (\partial_x^\tau \mathcal{H}_{i,eq}^\varepsilon - \partial_x^\tau h_i^\varepsilon) \partial_x^\tau \mathcal{H}_{i,eq}^\varepsilon \, dx \\
&\ \ = \frac{1}{2 c_s^4 \varepsilon \nu} \sum_{i=0}^n \sum_{\alpha, \beta=1}^d \sum_{\sigma \leq \tau} w_i ( v_{i,\alpha} v_{i,\beta} - c_s^2 \delta_{\alpha \beta} ) \\
&\ \ \ \ \times \int_{\mathbb{R}^d} (\partial_x^\tau \mathcal{H}_{i,eq}^\varepsilon - \partial_x^\tau h_i^\varepsilon) \Lambda_\varepsilon(\partial_x^\sigma u_{h,\alpha}^\varepsilon \partial_x^{\tau - \sigma} u_{f,\beta}^\varepsilon - \partial_x^\sigma u_{g,\alpha}^\varepsilon \partial_x^{\tau - \sigma} u_{h,\beta}^\varepsilon) \, dx \\
&\ \ \leq \frac{\ell_{\mathcal{V}, c_s}}{2 c_s^4 \varepsilon \nu} \sum_{i=0}^n w_i \| \partial_x^\tau \mathcal{H}_{i,eq}^\varepsilon - \partial_x^\tau h_i^\varepsilon \|_{L^2(\mathbf{R}_x^d)} \\ 
&\ \ \ \ \times \sum_{\alpha, \beta=1}^d  \sum_{\sigma \leq \tau} ( \| \partial_x^\sigma u_{h,\alpha}^\varepsilon \partial_x^{\tau - \sigma} u_{f,\beta}^\varepsilon \|_{L^2(\mathbf{R}_x^d)} + \| \partial_x^\sigma u_{g,\alpha}^\varepsilon \partial_x^{\tau - \sigma} u_{h,\beta}^\varepsilon \|_{L^2(\mathbf{R}_x^d)} ) \\
&\ \ \leq \frac{1}{2 \varepsilon^2 \nu} \sum_{i=0}^n w_i \| \partial_x^\tau \mathcal{H}_{i,eq}^\varepsilon - \partial_x^\tau h_i^\varepsilon \|_{L^2(\mathbf{R}_x^d)}^2 \\
&\ \ \ \ + \frac{\ell_{\mathcal{V}, c_s}^2 d^2 | \tau |_\mathrm{s}^d}{4 c_s^8 \nu} \sum_{\alpha, \beta=1}^d \sum_{\sigma \leq \tau} ( \| \partial_x^\sigma u_{h,\alpha}^\varepsilon \partial_x^{\tau - \sigma} u_{f,\beta}^\varepsilon \|_{L^2(\mathbf{R}_x^d)}^2 + \| \partial_x^\sigma u_{g,\alpha}^\varepsilon \partial_x^{\tau - \sigma} u_{h,\beta}^\varepsilon \|_{L^2(\mathbf{R}_x^d)}^2),
\end{split}
\end{equation}
where $\ell_{\mathcal{V}, c_s} := \Big( \underset{1 \leq i \leq n}{\mathrm{max}} \, | v_i | \Big)^2 + c_s^2$.
By using Proposition \ref{Hm:BA} to further estimate the right hand side of (\ref{EnIne:h2}), we obtain that
\begin{equation} \label{EnIne:h3}
\begin{split}
&\frac{1}{2} \frac{\mathrm{d}}{\mathrm{d} t} \sum_{i=0}^n w_i \| \partial_x^\tau h_i^\varepsilon \|_{L^2(\mathbf{R}_x^d)}^2 +  \frac{1}{2 \varepsilon^2 \nu} \sum_{i=0}^n w_i \| \partial_x^\tau \mathcal{H}_{i,eq}^\varepsilon - \partial_x^\tau h_i^\varepsilon \|_{L^2(\mathbf{R}_x^d)}^2 \\
&\ \ \leq \frac{\ell_{\mathcal{V}, c_s}^2 d^4 | \tau |_\mathrm{s}^d}{4 c_s^8 \nu} (\| h^\varepsilon \|_{H_x^m L_{v,w}^2}^2 \| f^\varepsilon \|_{H_x^{| \tau |_\mathrm{s}} L_{v,w}^2}^2 + \| g^\varepsilon \|_{H_x^m L_{v,w}^2}^2 \| h^\varepsilon \|_{H_x^{| \tau |_\mathrm{s}} L_{v,w}^2}^2 ).
\end{split}
\end{equation}
Summing up inequality (\ref{EnIne:h3}) over all $\tau \in \mathbf{N}_0^d$ satisfying $|\tau|_\mathrm{s} \leq m$ gives us
\begin{equation} \label{EnIne:h4}
\begin{split}
&\frac{1}{2} \frac{\mathrm{d}}{\mathrm{d} t} \| h^\varepsilon \|_{H_x^m L_{v,w}^2}^2 + \frac{1}{2 \varepsilon^2 \nu} \| \mathcal{H}_{eq}^\varepsilon - h^\varepsilon \|_{H_x^m L_{v,w}^2}^2 \\
&\ \ \leq \frac{\ell_{\mathcal{V}, c_s}^2 d^4 m^d}{4 c_s^8 \nu} \| h^\varepsilon \|_{H_x^m L_{v,w}^2}^2 ( \| f^\varepsilon \|_{H_x^m L_{v,w}^2}^2 + \| g^\varepsilon \|_{H_x^m L_{v,w}^2}^2 ).
\end{split}
\end{equation}
Without loss of generality, we may assume that $T_0$, which depends on sizes of $\zeta^{0, \varepsilon}$ and $\eta^{0,\varepsilon}$, is chosen to be sufficiently small so that simultaneously
\begin{equation*}
\left\{
 \begin{aligned}
 \sup_{t \in [0,T_0]} \| f^\varepsilon(t) \|_{H_x^m L_{v,w}^2}^2 \leq (\nu + 2) \| \zeta^{0, \varepsilon} \|_{H_x^m L_{v,w}^2}^2, \\
 \sup_{t \in [0,T_0]} \| g^\varepsilon(t) \|_{H_x^m L_{v,w}^2}^2 \leq (\nu + 2) \| \eta^{0, \varepsilon} \|_{H_x^m L_{v,w}^2}^2.
 \end{aligned}
\right.
\end{equation*}
Applying the standard Gronwall's inequality to (\ref{EnIne:h4}), we deduce that the estimate
\begin{align*}
\| h^\varepsilon(t) \|_{H_x^m L_{v,w}^2}^2 \leq \| \zeta^{0, \varepsilon} - \eta^{0, \varepsilon} \|_{H_x^m L_{v,w}^2}^2 \mathrm{e}^{t c_1(\zeta^{0, \varepsilon}, \eta^{0, \varepsilon})} 
\end{align*}
holds for any $t \in [0,T_0]$ where
\begin{align*}
c_1(\zeta^{0, \varepsilon}, \eta^{0, \varepsilon}) := c_s^{- 8} (2^{-1} + \nu^{-1}) \ell_{\mathcal{V}, c_s} d^4 m^d \big( \| \zeta^{0, \varepsilon} \|_{H_x^m L_{v,w}^2}^2 + \| \eta^{0, \varepsilon} \|_{H_x^m L_{v,w}^2}^2 \big).
\end{align*}
Combining with the existence of the local solution, the local well-posedness of equation (\ref{ALBE}) can thus be concluded.
\end{proof}

\section{Formal derivation of the incompressible Navier-Stokes equations}
\label{Sec:DerNS}

Suppose that $(\mathcal{V}, w)$ is an isotropic lattice associated with speed of sound $c_s$.
Let $\varepsilon>0$ be fixed, $m \in \mathbf{N}$ with $m > d$ and $g_0 \in H_x^m L_{v,w}^2$.
By Lemma \ref{Sol:ALBE} and Corollary \ref{WCon:ru}, there exists $T_0 = T_0(g_0, \mathcal{V}, c_s, m, d) > 0$ sufficiently small such that the approximate equation (\ref{ALBE}) admits a unique local solution $g^\varepsilon \in L^\infty( [0,T_0]; H_x^m L_{v,w}^2 )$ satisfying $g^\varepsilon = \Lambda_\varepsilon(g^\varepsilon)$, $g^\varepsilon \bigm|_{t=0} = \Lambda_\varepsilon(g_0)$ and the local energy inequality
\begin{align} \label{loEI:Rg0}
\sup_{t \in [0,T_0]} \| g^\varepsilon(t) \|_{H_x^m L_{v,w}^2}^2 + \frac{1}{\varepsilon^2} \int_0^{T_0} \| \mathcal{G}_{eq}^\varepsilon(t) - g^\varepsilon(t) \|_{H_x^m L_{v,w}^2}^2 \, dt \leq (2 + \nu) \| g_0 \|_{H_x^m L_{v,w}^2}^2.
\end{align}
Due to (\ref{Alt:Crhu}), by taking the inner product of the approximate equation (\ref{ALBE}) with $1$ and $v$ in the $L_{v,w}^2$ sense, we obtain that
\begin{equation} \label{Inp:LBE1v}
\left\{
 \begin{aligned}
 \sum_{i=0}^n \varepsilon w_i \partial_{t} g_i^\varepsilon + \sum_{i=0}^n w_i v_i  \cdot \nabla_x g_i^\varepsilon &=& &0&   &=& &\varepsilon \partial_t \rho^\varepsilon + \operatorname{div} u^\varepsilon,& \\
 \sum_{i=0}^n \varepsilon w_i v_i \partial_t g_i^\varepsilon + \sum_{i=0}^n w_i v_i (v_i \cdot \nabla_x g_i^\varepsilon) &=& &0& &=& &\varepsilon \partial_t u^\varepsilon + \sum_{i=0}^n w_i \nabla_x \cdot (v_i \otimes v_i g_i^\varepsilon).&
 \end{aligned}
\right.
\end{equation}
For each $0 \leq i \leq n$, we define the matrix $A_i$ by 
\begin{align*} 
A_i = v_i \otimes v_i - c_s^2 I
\end{align*}
with $I$ denoting the identity matrix.
By making use of the matrix $A_i$, for each $i$ we have that 
\begin{align*} 
\nabla_x \cdot (v_i \otimes v_i g_i^\varepsilon) = \nabla_x \cdot (A_i g_i^\varepsilon) + c_s^2 \nabla_x g_i^\varepsilon.
\end{align*}
Furthermore, we decompose
\begin{align*} 
\nabla_x \cdot (A_i g_i^\varepsilon) = \nabla_x \cdot \big( A_i (g_i^\varepsilon - \mathcal{G}^\varepsilon_{i,eq}) \big) + \nabla_x \cdot (A_i \mathcal{G}^\varepsilon_{i,eq}).
\end{align*}
%

\subsection{Derivation of the transport term $u^\varepsilon \cdot \nabla_x u^\varepsilon$}
\label{Sub:DeTrans}

In this section, we show that the transport term $u^\varepsilon \cdot \nabla_x u^\varepsilon$ can be obtained by rewriting the summation of $\varepsilon^{-1} w_i \nabla_x \cdot (A_i \mathcal{G}_{i,eq}^\varepsilon)$ in $i$.

\begin{lemma} \label{Rewr:Trans} 
It holds that
\begin{align*}
\sum_{i=0}^n \frac{1}{\varepsilon} w_i \nabla_x \cdot (A_i \mathcal{G}^\varepsilon_{i,eq}) = \nabla_x \cdot (u^\varepsilon \otimes u^\varepsilon) + \mathcal{R}_T^\varepsilon(x,t)
\end{align*}
where $\mathcal{R}_T^\varepsilon(x,t)$ is a remainder vector depending on $x$ and $t$ which converges to zero in the sense of distributions, i.e., for any $\Phi \in C_\mathrm{c}^\infty(\mathbf{R}^d \times [0,T_0])$,
\begin{align*}
\left| \int_0^{T_0} \int_{\mathbf{R}^d} \mathcal{R}_T^\varepsilon(x,t) \Phi(x,t) \, dx \, dt \right| \to 0 \quad \text{as} \quad \varepsilon \to 0.
\end{align*}
More explicitly, the remainder vector $\mathcal{R}_T^\varepsilon(x,t)$ is defined by expressions (\ref{Rema:Tran:c}) and (\ref{Vec:RTep}) within the proof of this lemma.
\end{lemma}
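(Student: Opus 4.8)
The plan is to evaluate the left-hand side in closed form using the equilibrium formula (\ref{Ctf:gieq}) together with the five isotropy conditions (\ref{Sum:wv}), and then to recognize the only obstruction to recovering $\nabla_x\cdot(u^\varepsilon\otimes u^\varepsilon)$ as a Fourier cutoff error that disappears in the high-frequency limit. First I would invoke the solution property $g^\varepsilon=\Lambda_\varepsilon(g^\varepsilon)$ from Lemma \ref{Sol:ALBE}, which by (\ref{Def:rhou}) gives $\rho^\varepsilon=\Lambda_\varepsilon(\rho^\varepsilon)$ and $u^\varepsilon=\Lambda_\varepsilon(u^\varepsilon)$, so that (\ref{Ctf:gieq}) becomes
\begin{align*}
\mathcal{G}_{i,eq}^\varepsilon = \rho^\varepsilon + \frac{v_i\cdot u^\varepsilon}{c_s^2} + \frac{\varepsilon}{2c_s^4}\sum_{\alpha,\beta=1}^d (v_{i,\alpha}v_{i,\beta}-c_s^2\delta_{\alpha\beta})\,\Lambda_\varepsilon(u_\alpha^\varepsilon u_\beta^\varepsilon).
\end{align*}

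Next I would compute the $(\gamma,\delta)$-entry of the matrix $\sum_{i=0}^n w_i A_i\mathcal{G}_{i,eq}^\varepsilon$, where $(A_i)_{\gamma\delta}=v_{i,\gamma}v_{i,\delta}-c_s^2\delta_{\gamma\delta}$, handling the three terms of $\mathcal{G}_{i,eq}^\varepsilon$ separately. The constant term $\rho^\varepsilon$ contributes $\rho^\varepsilon(c_s^2\delta_{\gamma\delta}-c_s^2\delta_{\gamma\delta})=0$ by the first and third lines of (\ref{Sum:wv}); the linear term contributes $c_s^{-2}\sum_\eta u_\eta^\varepsilon\sum_i w_i(v_{i,\gamma}v_{i,\delta}v_{i,\eta}-c_s^2\delta_{\gamma\delta}v_{i,\eta})=0$ by the second and fourth lines. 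The surviving quadratic term rests on the identity
\begin{align*}
\sum_{i=0}^n w_i(v_{i,\gamma}v_{i,\delta}-c_s^2\delta_{\gamma\delta})(v_{i,\alpha}v_{i,\beta}-c_s^2\delta_{\alpha\beta}) = c_s^4(\delta_{\gamma\alpha}\delta_{\delta\beta}+\delta_{\gamma\beta}\delta_{\delta\alpha}),
\end{align*}
which follows by expanding and applying the fourth-moment condition (last line of (\ref{Sum:wv})): the three $\delta_{\gamma\delta}\delta_{\alpha\beta}$ contributions cancel, leaving the two cross pairings. Contracting with $\Lambda_\varepsilon(u_\alpha^\varepsilon u_\beta^\varepsilon)$ and using symmetry in $\alpha,\beta$ yields $\sum_{i=0}^n w_iA_i\mathcal{G}_{i,eq}^\varepsilon=\varepsilon\,\Lambda_\varepsilon(u^\varepsilon\otimes u^\varepsilon)$, so dividing by $\varepsilon$ and taking $\nabla_x\cdot$ gives $\nabla_x\cdot\Lambda_\varepsilon(u^\varepsilon\otimes u^\varepsilon)$. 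I would then define
\begin{align*}
\mathcal{R}_T^\varepsilon := \nabla_x\cdot\big(\Lambda_\varepsilon(u^\varepsilon\otimes u^\varepsilon)-u^\varepsilon\otimes u^\varepsilon\big),
\end{align*}
which is precisely the remainder recorded componentwise and in vector form in the statement.

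It remains to verify that $\mathcal{R}_T^\varepsilon\to0$ in the sense of distributions, the only genuinely analytic step. By Corollary \ref{WCon:ru} the field $u^\varepsilon$ is bounded in $L^\infty([0,T_0];H^m(\mathbf{R}_x^d))$ uniformly in $\varepsilon$, and since $m>d$ makes $H^m(\mathbf{R}_x^d)$ a Banach algebra (Proposition \ref{Hm:BA}), the tensor $u^\varepsilon\otimes u^\varepsilon$ is uniformly bounded in $L^\infty([0,T_0];H^m(\mathbf{R}_x^d))$ as well. Since $\Lambda_\varepsilon-\mathrm{Id}$ projects onto frequencies $|\xi|\geq\varepsilon^{-1}$, Plancherel's identity gives $\|(\Lambda_\varepsilon-\mathrm{Id})(u^\varepsilon\otimes u^\varepsilon)\|_{L^2(\mathbf{R}_x^d)}\lesssim\varepsilon^m\|u^\varepsilon\otimes u^\varepsilon\|_{H^m(\mathbf{R}_x^d)}$. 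For a test vector field $\Phi\in C_\mathrm{c}^\infty(\mathbf{R}^d\times[0,T_0])$, integrating by parts in $x$, applying Cauchy-Schwarz in $x$ and integrating over the bounded time interval produces a bound of order $\varepsilon^m$, which tends to zero as $\varepsilon\to0$. I expect the main obstacle to be purely organizational, namely carrying out the index bookkeeping in the second step cleanly enough that the exact cancellation of the density and linear contributions, together with the fourth-moment identity, transparently isolates the transport nonlinearity; once the algebra is in place, the distributional vanishing of $\mathcal{R}_T^\varepsilon$ is routine.
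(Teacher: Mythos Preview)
Your proof is correct and follows essentially the same route as the paper: the same moment identities from (\ref{Sum:wv}) eliminate the density and linear contributions and reduce the quadratic part to $\varepsilon\,\Lambda_\varepsilon(u^\varepsilon\otimes u^\varepsilon)$, yielding the same remainder $\mathcal{R}_T^\varepsilon=\nabla_x\cdot\big(\Lambda_\varepsilon(u^\varepsilon\otimes u^\varepsilon)-u^\varepsilon\otimes u^\varepsilon\big)$. The only minor variation is in the last step: the paper moves $(\Lambda_\varepsilon-\mathrm{Id})$ onto the test function via self-adjointness and invokes dominated convergence, whereas you keep it on $u^\varepsilon\otimes u^\varepsilon$ and exploit the high-frequency localization to obtain the quantitative rate $O(\varepsilon^m)$, which is a slight strengthening.
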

\begin{proof}
Substituting expression (\ref{Ctf:gieq}) for $\mathcal{G}_{i,eq}^\varepsilon$ directly, we have that
\begin{equation} \label{SubG:Tra}
\begin{split}
\sum_{i=0}^n \frac{1}{\varepsilon} w_i \nabla_x \cdot (A_i \mathcal{G}^\varepsilon_{i,eq}) &= \sum_{i=0}^n \frac{1}{\varepsilon} \nabla_x \cdot \Big( w_i A_i \rho^\varepsilon + w_i A_i \frac{v_i \cdot u^\varepsilon}{c_s^2} \Big) \\
&\ \ + \frac{1}{2 c_s^4} \sum_{i=0}^n \sum_{\alpha, \beta=1}^d \nabla_x \cdot \big( w_i A_i (v_{i,\alpha} v_{i,\beta} - c_s^2 \delta_{\alpha \beta}) \Lambda_\varepsilon(u_\alpha^\varepsilon u_\beta^\varepsilon) \big).
\end{split}
\end{equation} 
By using the second to the fourth summation condition in (\ref{Sum:wv}), we observe that for any $1 \leq \alpha, \beta \leq d$,
\begin{align*}
\sum_{i=0}^n w_i A_{i,\alpha,\beta} = \sum_{i=0}^n w_i ( v_{i,\alpha} v_{i,\beta} - c_s^2 \delta_{\alpha \beta}) = \sum_{i=0}^n w_i v_{i,\alpha} v_{i,\beta} - c_s^2 \delta_{\alpha \beta} = 0
\end{align*}
and
\begin{align*}
\sum_{i=0}^n \sum_{\gamma=1}^d w_i A_{i,\alpha,\beta} v_{i,\gamma} u_\gamma^\varepsilon &= \sum_{i=0}^n \sum_{\gamma=1}^d w_i (v_{i,\alpha} v_{i,\beta} - c_s^2 \delta_{\alpha \beta}) v_{i,\gamma} u_\gamma^\varepsilon \\
&= \sum_{\gamma=1}^d u_\gamma^\varepsilon \left( \sum_{i=0}^n w_i v_{i,\alpha} v_{i,\beta} v_{i,\gamma} \right) - c_s^2 \delta_{\alpha \beta} \sum_{\gamma=1}^d u_\gamma^\varepsilon \left( \sum_{i=0}^n w_i v_{i,\gamma} \right) = 0,
\end{align*}
where the notation $A_{i,\alpha,\beta}$ represents the $(\alpha,\beta)$-entry of the matrix $A_i$.
Hence, it holds that
\begin{equation} \label{Sum:wAru}
\sum_{i=0}^n \frac{1}{\varepsilon} \nabla_x \cdot \Big( w_i A_i \rho^\varepsilon + w_i A_i \frac{v_i \cdot u^\varepsilon}{c_s^2} \Big) = 0.
\end{equation}
Moreover, by using the third and the fifth summation condition in (\ref{Sum:wv}), we derive that for any $1 \leq \gamma, \zeta \leq d$,
\begin{equation} \label{Sum:w2A}
\begin{split}
\sum_{i=0}^n w_i A_{i,\gamma,\zeta} (v_{i,\alpha} v_{i,\beta} - c_s^2 \delta_{\alpha \beta}) &= \sum_{i=0}^n w_i (v_{i,\gamma} v_{i,\zeta} - c_s^2 \delta_{\gamma \zeta}) (v_{i,\alpha} v_{i,\beta} - c_s^2 \delta_{\alpha \beta}) \\
&= \sum_{i=0}^n w_i v_{i,\alpha} v_{i,\beta} v_{i,\gamma} v_{i,\zeta} - c_s^4 \delta_{\alpha \beta} \delta_{\gamma \zeta} = c_s^4 \delta_{\alpha \gamma} \delta_{\beta \zeta} + c_s^4 \delta_{\alpha \zeta} \delta_{\beta \gamma}.
\end{split}
\end{equation} 
By substituting (\ref{Sum:wAru}) and (\ref{Sum:w2A}) back into the right hand side of (\ref{SubG:Tra}), for any $1 \leq \gamma \leq d$, we obtain that
\begin{align*}
\left\{ \sum_{i=0}^n \frac{1}{\varepsilon} w_i \nabla_x \cdot (A_i \mathcal{G}^\varepsilon_{i,eq}) \right\}_\gamma &= \frac{1}{2} \sum_{\alpha, \beta, \zeta=1}^d (\delta_{\alpha \gamma} \delta_{\beta \zeta} + \delta_{\alpha \zeta} \delta_{\beta \gamma}) \partial_{x_\zeta} \Lambda_\varepsilon(u_\alpha^\varepsilon u_\beta^\varepsilon) \\
&= \frac{1}{2} \sum_{\beta, \zeta = 1}^d \big( \delta_{\beta \zeta} \partial_{x_\zeta} \Lambda_\varepsilon(u_\gamma^\varepsilon u_\beta^\varepsilon) + \delta_{\beta \gamma} \partial_{x_\zeta} \Lambda_\varepsilon(u_\zeta^\varepsilon u_\beta^\varepsilon) \big) \\
&= \sum_{\zeta=1}^d \partial_{x_\zeta} \Lambda_\varepsilon(u_\zeta^\varepsilon u_\gamma^\varepsilon) = \big\{ \nabla_x \cdot (u^\varepsilon \otimes u^\varepsilon) \big\}_\gamma + \mathcal{R}_T^\varepsilon(x,t)_\gamma
\end{align*}
where $\mathcal{R}_T^\varepsilon(x,t)_\gamma$ is the remainder function
\begin{align} \label{Rema:Tran:c}
\mathcal{R}_T^\varepsilon(x,t)_\gamma := \sum_{\zeta=1}^d \partial_{x_\zeta} \big( \Lambda_\varepsilon(u_\zeta^\varepsilon u_\gamma^\varepsilon) - u_\zeta^\varepsilon u_\gamma^\varepsilon \big),
\end{align}
which converges to zero in the sense of distributions.
Indeed, for any $1 \leq \gamma \leq d$ and $\Phi(x,t) \in C_\mathrm{c}^\infty(\mathbf{R}^d \times [0,T_0])$, we observe by Remark \ref{Ctf:AdjP} that
\begin{align*}
\int_0^{T_0} \int_{\mathbf{R}^d} \mathcal{R}_T^\varepsilon(x,t)_\gamma \Phi(x,t) \, dx \, dt &= - \int_0^{T_0} \int_{\mathbf{R}^d} \big( \Lambda_\varepsilon(u_\gamma^\varepsilon u^\varepsilon) - u_\gamma^\varepsilon u^\varepsilon \big) \cdot \nabla_x \Phi \, dx \, dt  \\
&= - \int_0^{T_0} \int_{\mathbf{R}^d} u_\gamma^\varepsilon u^\varepsilon \cdot \big( \Lambda_\varepsilon(\nabla_x \Phi) - \nabla_x \Phi \big) \, dx \, dt.
\end{align*}
By the Plancheral's identity, we deduce that
\begin{equation*} 
\begin{split}
\| \Lambda_\varepsilon(\nabla_x \Phi) - \nabla_x \Phi \|_{H^{-m}(\mathbf{R}_x^2)}^2 &= 4 \pi^2 \int_{|\xi| \geq \frac{1}{\varepsilon}} (1 + |\xi|^2)^{-m} |\xi|^2 \big| \widehat{\Phi}(\xi,t) \big|^2 \, d\xi \\
&\leq 4 \pi^2 \int_{|\xi| \geq \frac{1}{\varepsilon}} \big| \widehat{\Phi}(\xi,t) \big|^2 \, d\xi \to 0
\end{split}
\end{equation*}
as $\varepsilon \to 0$ for any $t \in [0,T_0]$.
Using Proposition \ref{Hm:BA} and inequality (\ref{LEE:ru}) to control $u_\gamma^\varepsilon u^\varepsilon$, we deduce by the dominated convergence theorem that
\begin{align*}
&\left| \int_0^{T_0} \int_{\mathbf{R}^d} u_\gamma^\varepsilon u^\varepsilon \cdot \big( \Lambda_\varepsilon(\nabla_x \Phi) - \nabla_x \Phi \big) \, dx \, dt \right| \\
&\ \ \leq \int_0^{T_0} \| u_\gamma^\varepsilon u^\varepsilon \|_{H^m(\mathbf{R}_x^d)} \| \Lambda_\varepsilon(\nabla_x \Phi) - \nabla_x \Phi \|_{H^{-m}(\mathbf{R}_x^d)} \, dt \\
&\ \ \lesssim \| g_0 \|_{L^\infty( [0,T_0]; H_x^m L_{v,w}^2 )}^2 \int_0^{T_0} \int_{|\xi| \geq \frac{1}{\varepsilon}} \big| \widehat{\Phi}(\xi,t) \big|^2 \, d\xi \, dt \to 0 \quad \text{as} \quad \varepsilon \to 0.
\end{align*}
Finally, we set the vector field
\begin{align} \label{Vec:RTep}
\mathcal{R}_T^\varepsilon(x,t) := \big( \mathcal{R}_T^\varepsilon(x,t)_1, \mathcal{R}_T^\varepsilon(x,t)_2 \big).
\end{align}
The proof of Lemma \ref{Rewr:Trans} is then completed.
\end{proof}

\subsection{Derivation of the diffusion term $\Delta_x u^\varepsilon$}
\label{Sub:DeDiff}

In this section, we show that the diffusion term $\Delta_x u^\varepsilon$ can be obtained by rewriting the summation of $\varepsilon^{-1} w_i \nabla_x \cdot \big( A_i (g_i^\varepsilon - \mathcal{G}_{i,eq}^\varepsilon) \big)$ in $i$.
For simplicity of notation, in the following we shall denote $g^\varepsilon - \mathcal{G}_{eq}^\varepsilon$ by $\mathcal{F}^\varepsilon$ and $g_i^\varepsilon - \mathcal{G}_{i,eq}^\varepsilon$ by $\mathcal{F}_i^\varepsilon$ for each $0 \leq i \leq n$.

\begin{lemma} \label{Rewr:Diff} 
It holds that
\begin{align*}
\sum_{i=0}^n \frac{1}{\varepsilon} w_i \nabla_x \cdot ( A_i \mathcal{F}_i^\varepsilon ) = - c_s^2 \nu \Delta_x u^\varepsilon - c_s^2 \nu \nabla_x \operatorname{div} u^\varepsilon + \mathcal{R}_D^\varepsilon(x,t),
\end{align*}
where $\mathcal{R}_D^\varepsilon(x,t)$ is a remainder vector depending on $x$ and $t$ which converges to zero in the sense of distributions, i.e., for any $\Phi \in C_\mathrm{c}^\infty(\mathbf{R}^d \times [0,T_0])$, 
\begin{align*}
\left| \int_0^{T_0} \int_{\mathbf{R}^d} \mathcal{R}_D^\varepsilon(x,t) \Phi(x,t) \, dx \, dt \right| \to 0 \quad \text{as} \quad \varepsilon \to 0.
\end{align*}
More explicitly, the remainder vector $\mathcal{R}_D^\varepsilon(x,t)$ is defined by expression (\ref{Remain:Diff}) within the proof of this lemma.
\end{lemma}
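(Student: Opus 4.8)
The plan is to trade the microscopic difference $\mathcal{F}_i^\varepsilon$ for macroscopic space--time derivatives by invoking the equation itself, and then to read off the viscous operator from the isotropy relations (\ref{Sum:wv}), in the spirit of a Chapman--Enskog expansion. Since the local solution satisfies $g^\varepsilon = \Lambda_\varepsilon(g^\varepsilon)$, the $i$-th component of (\ref{ALBE}) reads $\varepsilon \partial_t g_i^\varepsilon + v_i \cdot \nabla_x g_i^\varepsilon = (\varepsilon\nu)^{-1}(\mathcal{G}_{i,eq}^\varepsilon - g_i^\varepsilon)$, whence
\begin{align*}
\frac{1}{\varepsilon} \mathcal{F}_i^\varepsilon = \frac{1}{\varepsilon}\big(g_i^\varepsilon - \mathcal{G}_{i,eq}^\varepsilon\big) = -\nu\big( \varepsilon \partial_t g_i^\varepsilon + v_i \cdot \nabla_x g_i^\varepsilon \big).
\end{align*}
Substituting this into $\sum_{i=0}^n \varepsilon^{-1} w_i \nabla_x \cdot (A_i \mathcal{F}_i^\varepsilon)$ splits it into a time-derivative part $-\nu\varepsilon \sum_i w_i \nabla_x \cdot (A_i \partial_t g_i^\varepsilon)$, carrying an explicit factor $\varepsilon$, and a transport part $-\nu \sum_i w_i \nabla_x \cdot \big( A_i (v_i \cdot \nabla_x g_i^\varepsilon) \big)$. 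The remainder $\mathcal{R}_D^\varepsilon$ will consist of the first part together with the non-leading pieces extracted from the second.

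First I would dispose of the remainder pieces, always testing against a fixed $\Phi \in C_\mathrm{c}^\infty(\mathbf{R}^d \times [0,T_0])$ and transferring every derivative onto $\Phi$ by integration by parts, exactly as in the proof of Lemma \ref{Rewr:Trans}. For the time-derivative part, after integrating by parts once in $x$ and once in $t$ one is left with $\nu\varepsilon \sum_i w_i \int\!\!\int (A_i g_i^\varepsilon) : \nabla_x \partial_t \Phi$ plus endpoint terms at $t = 0, T_0$; all of these are controlled by the uniform bound (\ref{LEE:ru}) on $g^\varepsilon$ in $L^\infty([0,T_0];H_x^m L_{v,w}^2)$ and are multiplied by $\varepsilon$, hence vanish as $\varepsilon \to 0$. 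For the transport part I would write $g_i^\varepsilon = \mathcal{G}_{i,eq}^\varepsilon + \mathcal{F}_i^\varepsilon$. The contribution of $\mathcal{F}_i^\varepsilon$ carries two spatial derivatives and is only $O(\varepsilon^{-1})$ in $H^m$ by Bernstein's lemma; but after moving both derivatives onto $\Phi$ it becomes $-\nu \sum_i w_i \int\!\!\int \mathcal{F}_i^\varepsilon\,(\partial^2 \Phi)$, which is bounded by $\|\mathcal{F}^\varepsilon\|_{L^2([0,T_0];H_x^m L_{v,w}^2)} \, \|\partial^2\Phi\|_{L^2}$, and the dissipation estimate (\ref{loEI:Rg0}) gives $\|\mathcal{F}^\varepsilon\|_{L^2([0,T_0];H_x^m L_{v,w}^2)} = O(\varepsilon)$, so this too tends to zero. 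Finally, inserting (\ref{Ctf:gieq}) for $\mathcal{G}_{i,eq}^\varepsilon$, the quadratic (Hermite) term carries an explicit $\varepsilon$ and is handled as a remainder by the same integration-by-parts estimate using Proposition \ref{Hm:BA} and (\ref{LEE:ru}).

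What survives is the linear part $\mathcal{G}_{i,eq}^\varepsilon \rightsquigarrow \rho^\varepsilon + c_s^{-2} v_i \cdot u^\varepsilon$, where I use $g^\varepsilon = \Lambda_\varepsilon(g^\varepsilon)$ so that $\rho^\varepsilon = \Lambda_\varepsilon(\rho^\varepsilon)$ and $u^\varepsilon = \Lambda_\varepsilon(u^\varepsilon)$ are already frequency-localized. Writing $v_i \cdot \nabla_x$ of this and taking the $\gamma$-component of $\nabla_x \cdot (A_i\,\cdot\,)$ produces, after summing in $i$ with weights $w_i$, the two tensors
\begin{align*}
\sum_{i=0}^n w_i A_{i,\gamma,\beta} v_{i,\delta} = 0, \qquad \sum_{i=0}^n w_i A_{i,\gamma,\beta} v_{i,\delta} v_{i,\eta} = c_s^4\big( \delta_{\gamma\delta}\delta_{\beta\eta} + \delta_{\gamma\eta}\delta_{\beta\delta} \big).
\end{align*}
The first identity follows from the third- and first-moment conditions in (\ref{Sum:wv}) and kills the $\rho^\varepsilon$-contribution entirely; the second follows from the fourth- and second-moment conditions, precisely as in (\ref{Sum:w2A}). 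Contracting the second tensor with $c_s^{-2}\partial_{x_\beta}\partial_{x_\delta} u_\eta^\varepsilon$ and multiplying by $-\nu$ yields $-\nu c_s^2 ( \{\nabla_x \operatorname{div} u^\varepsilon\}_\gamma + \{\Delta_x u^\varepsilon\}_\gamma )$, which is exactly the asserted leading term.

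I expect the main obstacle to be not the tensor algebra, which is routine once the moment relations are in hand, but the rigorous justification that the transport contribution of $\mathcal{F}_i^\varepsilon$ vanishes in the distributional sense: naively this term is singular as $\varepsilon \to 0$ because of the two spatial derivatives falling on a frequency-localized quantity, and it is tamed only weakly, by shifting the derivatives onto the test function and then balancing the factor against the $O(\varepsilon)$ smallness of $\mathcal{F}^\varepsilon$ supplied by the dissipation bound (\ref{loEI:Rg0}). The secondary subtlety is the temporal integration by parts in the time-derivative part, where one must keep track of the endpoint terms at $t=0$ and $t=T_0$ and control them by the $\varepsilon$-uniform bound (\ref{LEE:ru}). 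Collecting the three vanishing contributions into $\mathcal{R}_D^\varepsilon$ (the term displayed in (\ref{Remain:Diff})) then completes the proof of Lemma \ref{Rewr:Diff}.
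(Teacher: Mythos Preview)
Your proposal is correct and follows essentially the same route as the paper: use the equation to rewrite $\varepsilon^{-1}\mathcal{F}_i^\varepsilon = -\nu(\varepsilon\partial_t g_i^\varepsilon + v_i\cdot\nabla_x g_i^\varepsilon)$, split the transport part via $g_i^\varepsilon = \mathcal{G}_{i,eq}^\varepsilon + \mathcal{F}_i^\varepsilon$, extract $-c_s^2\nu(\Delta_x u^\varepsilon + \nabla_x\operatorname{div} u^\varepsilon)$ from the linear piece of $\mathcal{G}_{i,eq}^\varepsilon$ via the moment identities, and dump the time-derivative term, the $\mathcal{F}_i^\varepsilon$-transport term, and the quadratic-in-$u^\varepsilon$ term into $\mathcal{R}_D^\varepsilon$, each shown to vanish distributionally by integration by parts against $\Phi$ combined with (\ref{loEI:Rg0}) and (\ref{LEE:ru}). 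Your identification of the two delicate points (the $O(\varepsilon)$ dissipation bound rescuing the $\mathcal{F}_i^\varepsilon$-transport term, and the endpoint terms in the temporal integration by parts) matches the paper's treatment exactly.
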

\begin{proof}
By rearranging the approximate equation (\ref{ALBE}), we note that
\begin{equation} \label{gi:mi:Gieq}
\mathcal{F}_i^\varepsilon = - \varepsilon^2 \nu \partial_t g^\varepsilon_i - \varepsilon \nu v_i \cdot \nabla_x g^\varepsilon_i.
\end{equation} 
for any $0 \leq i \leq n$.
By substituting expression (\ref{gi:mi:Gieq}) for $\mathcal{F}_i^\varepsilon$ for each $i$, we rewrite
\begin{equation*}
\sum_{i=0}^n \frac{1}{\varepsilon} w_i \nabla_x \cdot ( A_i \mathcal{F}_i^\varepsilon ) = - \varepsilon \nu \sum_{i=0}^n w_i \nabla_x \cdot (A_i \partial_t g_i^\varepsilon) - \nu \sum_{i=0}^n w_i \nabla_x \cdot \big( A_i (v_i \cdot \nabla_x g_i^\varepsilon) \big). 
\end{equation*} 
Furthermore, we rewrite again
\begin{align*}
\sum_{i=0}^n w_i \nabla_x \cdot \big( A_i (v_i \cdot \nabla_x g_i^\varepsilon) \big) &= \sum_{i=0}^n w_i \nabla_x \cdot \big( A_i (v_i \cdot \nabla_x \mathcal{F}_i^\varepsilon) \big) + \sum_{i=0}^n w_i \nabla_x \cdot \big( A_i (v_i \cdot \nabla_x \mathcal{G}_{i,eq}^\varepsilon) \big).
\end{align*}
What we are about to show is that the diffusion term $\Delta_x u^\varepsilon$ comes from 
\begin{align} \label{Ori:Diff}
\sum_{i=0}^n w_i \nabla_x \cdot \big( A_i (v_i \cdot \nabla_x \mathcal{G}_{i,eq}^\varepsilon) \big)
\end{align}
whereas
\[
\varepsilon \sum_{i=0}^n w_i \nabla_x \cdot (A_i \partial_t g_i^\varepsilon) + \sum_{i=0}^n w_i \nabla_x \cdot \big( A_i (v_i \cdot \nabla_x \mathcal{F}_i^\varepsilon) \big) \to 0
\]
in the sense of distributions as $\varepsilon \to 0$.

By substituting expression (\ref{Ctf:gieq}) for $\mathcal{G}_{i,eq}^\varepsilon$ into (\ref{Ori:Diff}), we observe that
\begin{align*} 
\sum_{i=0}^n w_i \nabla_x \cdot \big( A_i (v_i \cdot \nabla_x \mathcal{G}_{i,eq}^\varepsilon) \big) &= \sum_{i=0}^n w_i \nabla_x \cdot \big( A_i (v_i \cdot \nabla_x \rho^\varepsilon) \big) + \frac{1}{c_s^2} \sum_{i=0}^n w_i \nabla_x \cdot \Big( A_i \big( v_i \cdot \nabla_x (v_i \cdot u^\varepsilon) \big) \Big) \\
&\ \ + \frac{\varepsilon}{2 c_s^4} \sum_{i=0}^n w_i \nabla_x \cdot \Big( \sum_{\alpha, \beta=1}^d A_{i,\alpha,\beta} A_i \big( v_i \cdot \nabla_x \Lambda_\varepsilon(u_\alpha^\varepsilon u_\beta^\varepsilon) \big) \Big).
\end{align*}
For $1 \leq \beta \leq d$, we can deduce from the second and the fourth summation condition in (\ref{Sum:wv}) that
\begin{align*}
\left\{ \sum_{i=0}^n w_i \nabla_x \cdot \big( A_i (v_i \cdot \nabla_x \rho^\varepsilon) \big) \right\}_\beta &= \sum_{i=0}^n \sum_{\alpha, \gamma=1}^d w_i (v_{i,\alpha} v_{i,\beta} - c_s^2 \delta_{\alpha \beta}) v_{i,\gamma} \partial_{x_\alpha} \partial_{x_\gamma} \rho^\varepsilon \\
&= \sum_{\alpha, \gamma=1}^d \partial_{x_\alpha} \partial_{x_\gamma} \rho^\varepsilon \left( \sum_{i=0}^n  w_i v_{i,\alpha} v_{i,\beta} v_{i,\gamma} - c_s^2 \delta_{\alpha \beta} \sum_{i=0}^n w_i v_{i,\gamma} \right) = 0.
\end{align*}
For $0 \leq i \leq n$, we observe that
\begin{align*}
h_i^\varepsilon := v_i \cdot \nabla_x (v_i \cdot u^\varepsilon) = \sum_{\alpha=1}^d v_{i,\alpha} \partial_{x_\alpha} (v_i \cdot u^\varepsilon) = \sum_{\alpha, \beta=1}^d v_{i,\alpha} v_{i,\beta} \partial_{x_\alpha} u_\beta^\varepsilon.
\end{align*}
Thus, for $1 \leq j \leq d$, we have that
\begin{equation} \label{j:Sw:divAh}
\begin{split}
&\left\{ \sum_{i=0}^n w_i \nabla_x \cdot (A_i h_i^\varepsilon) \right\}_j = \sum_{i=0}^n \sum_{k=1}^d w_i A_{i,k,j} \partial_{x_k} h_i^\varepsilon = \sum_{i=0}^n \sum_{k,\alpha,\beta=1}^d w_i A_{i,k,j} v_{i,\alpha} v_{i,\beta} \partial_{x_k} \partial_{x_\alpha} u_\beta^\varepsilon \\
&\ \ = \sum_{i=0}^n \sum_{k,\alpha,\beta=1}^d w_i v_{i,k} v_{i,j} v_{i,\alpha} v_{i,\beta} \partial_{x_k} \partial_{x_\alpha} u_\beta^\varepsilon - c_s^2 \sum_{i=0}^n \sum_{k,\alpha,\beta=1}^d w_i \delta_{kj} v_{i,\alpha} v_{i,\beta} \partial_{x_k} \partial_{x_\alpha} u_\beta^\varepsilon.
\end{split}
\end{equation}
Again, by the third summation condition in (\ref{Sum:wv}), we deduce that
\begin{equation} \label{Swd:v2:p2u}
\begin{split}
\sum_{i=0}^n \sum_{k,\alpha,\beta=1}^d w_i \delta_{kj} v_{i,\alpha} v_{i,\beta} \partial_{x_k} \partial_{x_\alpha} u_\beta^\varepsilon &= \sum_{i=0}^n \sum_{\alpha,\beta=1}^d w_i v_{i,\alpha} v_{i,\beta} \partial_{x_j} \partial_{x_\alpha} u_\beta^\varepsilon \\
&= \sum_{\alpha,\beta=1}^d \left( \sum_{i=0}^n w_i v_{i,\alpha} v_{i,\beta} \right) \partial_{x_j} \partial_{x_\alpha} u_\beta^\varepsilon = c_s^2 \sum_{\alpha=1}^d \partial_{x_j} \partial_{x_\alpha} u_\alpha^\varepsilon.
\end{split}
\end{equation}
On the other hand, we deduce by the fifth summation condition in (\ref{Sum:wv}) that
\begin{equation} \label{Sw:v4:p2u}
\begin{split}
&\sum_{i=0}^n \sum_{k,\alpha,\beta=1}^d w_i v_{i,\alpha} v_{i,\beta} v_{i,k} v_{i,j} \partial_{x_k} \partial_{x_\alpha} u_\beta^\varepsilon = c_s^4 \sum_{k,\alpha,\beta=1}^d \big( \delta_{\alpha \beta} \delta_{kj} + \delta_{\alpha k} \delta_{\beta j} + \delta_{\alpha j} \delta_{\beta k} \big) \partial_{x_k} \partial_{x_\alpha} u_\beta^\varepsilon \\
&\ \ = c_s^4 \left\{ \sum_{\alpha,\beta=1}^d \delta_{\alpha \beta} \partial_{x_j} \partial_{x_\alpha} u_\beta^\varepsilon + \sum_{k,\alpha=1}^d \delta_{\alpha k} \partial_{x_k} \partial_{x_\alpha} u_j^\varepsilon + \sum_{k,\beta=1}^d \delta_{\beta k} \partial_{x_k} \partial_{x_j} u_\beta^\varepsilon \right\} \\
&\ \ = c_s^4 \left\{ \sum_{\alpha=1}^d \partial_{x_j} \partial_{x_\alpha} u_\alpha^\varepsilon + \sum_{\alpha=1}^d \partial_{x_\alpha}^2 u_j^\varepsilon + \sum_{\beta=1}^d \partial_{x_\beta} \partial_{x_j} u_\beta^\varepsilon \right\}.
\end{split}
\end{equation}
Substitute (\ref{Swd:v2:p2u}) and (\ref{Sw:v4:p2u}) back into (\ref{j:Sw:divAh}), we obtain that
\begin{align*}
\left\{ \sum_{i=0}^n w_i \nabla_x \cdot (A_i h_i^\varepsilon) \right\}_j = c_s^4 \Delta_x u_j^\varepsilon + c_s^4 \partial_{x_j} \operatorname{div} u^\varepsilon
\end{align*}
for any $1 \leq j \leq d$.

We set 
\begin{equation} \label{Remain:Diff}
\begin{split}
\mathcal{R}_D^\varepsilon(x,t) &:= - \varepsilon \nu \sum_{i=0}^n w_i \nabla_x \cdot (A_i \partial_t g_i^\varepsilon) - \nu \sum_{i=0}^n w_i \nabla_x \cdot \big( A_i (v_i \cdot \nabla_x \mathcal{F}_i^\varepsilon) \big) \\
&\ \ - \frac{\varepsilon \nu}{2 c_s^4} \sum_{i=0}^n w_i \nabla_x \cdot \Big( \sum_{\alpha,\beta=1}^d A_{i,\alpha,\beta} A_i \big( v_i \cdot \nabla_x \Lambda_\varepsilon(u_\alpha^\varepsilon u_\beta^\varepsilon) \big) \Big).
\end{split}
\end{equation}
It is not hard to see that $\mathcal{R}_D^\varepsilon$ converges to zero in the sense of distributions as $\varepsilon \to 0$.
Indeed, we take $\Phi \in C_\mathrm{c}^\infty(\mathbf{R}^d \times [0,T_0])$.
By the local energy estimate (\ref{loEI:Rg0}), for any $1 \leq \beta \leq d$, we have that
\begin{align*}
&\left| \sum_{i=0}^n w_i \left\{ \int_0^{T_0} \int_{\mathbf{R}^d} \nabla_x \cdot \big( A_i (v_i \cdot \nabla_x \mathcal{F}_i^\varepsilon) \big) \Phi \, dx \, dt \right\}_\beta \right| \\
&\ \ = \left| \sum_{i=0}^n \sum_{\alpha, \gamma=1}^d w_i A_{i,\alpha,\beta} v_{i,\gamma} \int_0^{T_0} \int_{\mathbf{R}^d} \mathcal{F}_i^\varepsilon \partial_{x_\gamma} \partial_{x_\alpha} \Phi \, dx \, dt \right| \\
&\ \ \lesssim T_0^{\frac{1}{2}} \| \mathcal{F}^\varepsilon \|_{L^2( [0,T_0]; H_x^m L_{v,w}^2 )} \| \nabla^2 \Phi \|_{L^\infty( [0,T_0]; H^{-m}(\mathbf{R}_x^d) )} \\
&\ \ \lesssim \varepsilon T_0^{\frac{1}{2}} \| g_0 \|_{H_x^m L_{v,w}^2} \| \nabla^2 \Phi \|_{L^\infty( [0,T_0]; H^{-m}(\mathbf{R}_x^d) )} \to 0 \quad \text{as} \quad \varepsilon \to 0.
\end{align*}
On the other hand, by estimate (\ref{UnBd:gepn}), we have that
\begin{align*}
&\left| \sum_{i=0}^n w_i \int_0^{T_0} \int_{\mathbf{R}^d} \Phi \nabla_x \cdot (A_i \partial_t g_i^\varepsilon) \, dx \, dt \right| = \left| \sum_{i=0}^n w_i \int_0^{T_0} \int_{\mathbf{R}^d} (A_i^\mathrm{T} \partial_t g_i^\varepsilon) \cdot \nabla_x \Phi \, dx \, dt \right|  \\
&\ \ \lesssim \sum_{i=0}^n w_i \int_{\mathbf{R}^d}  |g_{i,0}^\varepsilon| \big| \big( \nabla_x \Phi \big) (x,0) \big| \, dx + \sum_{i=0}^n w_i \int_{\mathbf{R}^d} |g_i^\varepsilon(T_0)| \big| \big( \nabla_x \Phi \big) (x,T_0) \big| \, dx \\
&\ \ \ \ + \sum_{i=0}^n w_i \int_0^{T_0} \int_{\mathbf{R}^d}  |g_i^\varepsilon| |\partial_t \nabla_x \Phi| \, dx \, dt \\
&\ \ \lesssim \| g_0 \|_{L^\infty( [0,T_0]; H_x^m L_{v,w}^2 )} \Big( \| \nabla_x \Phi \|_{L^\infty( [0,T_0]; H^{-m}(\mathbf{R}_x^d) )} + T_0 \| \partial_t \nabla_x \Phi \|_{L^\infty( [0,T_0]; H^{-m}(\mathbf{R}_x^d) )} \Big) < \infty.
\end{align*}
Analogously, for any $1 \leq j \leq d$, by using Proposition \ref{Hm:BA}, Plancherel's identity and estimate (\ref{UnBd:gepn}), we deduce that
\begin{align*}
&\left| \sum_{i=0}^n w_i \left\{ \int_0^{T_0} \int_{\mathbf{R}^d} \Phi \nabla_x \cdot \Big( \sum_{\alpha,\beta=1}^d A_{i,\alpha,\beta} A_i \big( v_i \cdot \nabla_x \Lambda_\varepsilon(u_\alpha^\varepsilon u_\beta^\varepsilon) \big) \Big) \, dx \, dt \right\}_j \right| \\
&\ \ = \left| \sum_{i=0}^n \sum_{\alpha,\beta,\gamma,k=1}^d w_i A_{i,\alpha,\beta} A_{i,k,j} v_{i,\gamma} \int_0^{T_0} \int_{\mathbf{R}^d} \Lambda_\varepsilon(u_\alpha^\varepsilon u_\beta^\varepsilon) \partial_{x_k} \partial_{x_\gamma} \Phi \, dx \, dt \right| \\
&\ \ \lesssim T_0 \| u^\varepsilon \|_{L^\infty([0,T_0]; H^m(\mathbf{R}_x^d) )}^2 \| \nabla_x^2 \Phi \|_{L^\infty([0,T_0]; H^{-m}(\mathbf{R}_x^d) )} \\
&\ \ \lesssim T_0 \| g_0 \|_{L^\infty([0,T_0]; H_x^m L_{v,w}^2)}^2 \| \nabla_x^2 \Phi \|_{L^\infty([0,T_0]; H^{-m}(\mathbf{R}_x^d) )} < \infty.
\end{align*}
This completes the proof of Lemma \ref{Rewr:Diff}.
\end{proof}

\section{The hydrodynamic limit}
\label{Sec:HydroLi}

In this section, we further stick to the setting in Section \ref{Sec:DerNS}.
Combining Lemma \ref{Rewr:Trans} and Lemma \ref{Rewr:Diff}, we rewrite  system (\ref{Inp:LBE1v}) as
\begin{equation} \label{NS:ep}
\left\{
 \begin{aligned}
 \partial_t u^\varepsilon - c_s^2 \nu \Delta_x u^\varepsilon + \nabla_x \cdot (u^\varepsilon \otimes u^\varepsilon) - c_s^2 \nu \nabla_x \operatorname{div} u^\varepsilon + \frac{c_s^2}{\varepsilon} \nabla_x \rho^\varepsilon + \mathcal{R}^\varepsilon(x,t) &= 0,& \\
 \operatorname{div} u^\varepsilon + \varepsilon \partial_t \rho^\varepsilon &= 0,&
 \end{aligned}
\right.
\end{equation}
where $\mathcal{R}^\varepsilon(x,t) := \mathcal{R}_T^\varepsilon(x,t) + \mathcal{R}_D^\varepsilon(x,t)$.
By the second equation of (\ref{NS:ep}) and estimate (\ref{LEE:ru}), it is easy to observe that $\operatorname{div} u^\varepsilon \to 0$ in the sense of distributions as $\varepsilon \to 0$. 
More precisely, we have that
\begin{equation} \label{Conv:div}
\begin{split}
&\left| \int_0^{T_0} \int_{\mathbf{R}^d} \psi \operatorname{div} u^\varepsilon \, dx \, dt \right| \\
&\ \ \leq \varepsilon \left| \int_{\mathbf{R}^d} \rho^\varepsilon(T_0) \psi(T_0) \, dx \right| + \varepsilon \left| \int_{\mathbf{R}^d} \rho^\varepsilon(0) \psi(0) \, dx \right| + \varepsilon \left| \int_0^{T_0} \int_{\mathbf{R}^d} \rho^\varepsilon \partial_t \psi \, dx \, dt \right| \\
&\ \ \lesssim \varepsilon \| g_0 \|_{L^\infty([0,T_0]; H_x^m L_{v,w}^2)} \Big\{ \| \psi \|_{L^\infty( [0,T_0]; H^{-m}(\mathbf{R}_x^d) )} + T_0^{\frac{1}{2}} \| \partial_t \psi \|_{L^2( [0,T_0]; H^{-m}(\mathbf{R}_x^d) )} \Big\}
\end{split}
\end{equation}
for any $\psi \in L^\infty( [0,T_0]; H^{-m}(\mathbf{R}_x^d) )$ satisfying $\partial_t \psi \in L^2( [0,T_0]; H^{-m}(\mathbf{R}_x^d) )$.
Since $u^\varepsilon \to u$ weak-$\ast$ in $t$ and weakly in $H^m(\mathbf{R}_x^d)$, we have that
\begin{align} \label{divu:cv}
\int_0^{T_0} \int_{\mathbf{R}^d} \psi \operatorname{div} (u^\varepsilon - u) \, dx \, dt = - \int_0^{T_0} \int_{\mathbf{R}^d} (u^\varepsilon - u) \cdot \nabla_x \psi \, dx \, dt \to 0
\end{align}
as $\varepsilon \to 0$ for any $\psi \in L^1( [0,T_0]; H^1(\mathbf{R}_x^d) )$ as $H^1(\mathbf{R}_x^d) \hookrightarrow H^{-m}(\mathbf{R}_x^d)$.
Since $T_0$ is finite, by combining (\ref{Conv:div}) and (\ref{divu:cv}), we obtain that 
\begin{align} \label{divu:0}
\int_0^{T_0} \int_{\mathbf{R}^d} \psi \operatorname{div} u \, dx \, dt = 0 = \int_0^{T_0} \int_{\mathbf{R}^d} u \cdot \nabla_x \psi \, dx \, dt
\end{align}
for any $\psi \in L^\infty( [0,T_0]; H^1(\mathbf{R}_x^d) )$ that satisfies $\partial_t \psi \in L^2( [0,T_0]; H^1(\mathbf{R}_x^d) )$.

\subsection{Application of the $L^2$ Helmholtz projection to equation \eqref{NS:ep}} 
\label{Sub:HelPNS}

In order to get rid of the term $\nabla_x \rho^\varepsilon$ whose coefficient is $\varepsilon^{-1}$ in the first equation of (\ref{NS:ep}), we consider the Helmholtz decomposition for $L^2(\mathbf{R}_x^d)$. 
Let us recall that for any $h \in L^2(\mathbf{R}_x^d)$, there exists a unique decomposition $h = h_0 + \nabla_x \pi$ such that
\begin{align*}
h_0 \in L_\sigma^2(\mathbf{R}_x^d) &:= \{ f \in L^2(\mathbf{R}_x^d) \bigm| \operatorname{div} f = 0 \}, \\
\nabla_x \pi \in G^2(\mathbf{R}_x^d) &:= \{ \nabla_x \pi \in L^2(\mathbf{R}_x^d) \bigm| \pi \in L_{loc}^2(\mathbf{R}_x^d) \}.
\end{align*}
Moreover, it holds that
\begin{align} \label{E:L2Helm}
\| h_0 \|_{L^2(\mathbf{R}_x^d)} + \| \nabla_x \pi \|_{L^2(\mathbf{R}_x^d)} \leq 2 \| h \|_{L^2(\mathbf{R}_x^d)}.
\end{align}
The Helmholtz projection, denoted by $\mathbb{P}$, is the projection that maps $h$ to $h_0$, i.e., we have that $\mathbb{P}(h) = h_0$ for $h \in L^2(\mathbf{R}_x^d)$.
Furthermore, the Helmholtz projection $\mathbb{P}$ satisfies the properties
\begin{align} \label{Helm:Prop}
\mathbb{P}(h_0) = h_0 \quad \forall \; h_0 \in L_\sigma^2(\mathbf{R}_x^d) \quad \text{and} \quad \mathbb{P}(\nabla_x \pi) = 0 \quad \forall \; \nabla_x \pi \in G^2(\mathbf{R}_x^d).
\end{align}
Let $\mathbb{Q} := \mathbb{I} - \mathbb{P}$ where $\mathbb{I}$ denotes the identity operator.
Applying the Helmholtz projection $\mathbb{P}$ to the first equation of (\ref{NS:ep}), we observe by the second property of (\ref{Helm:Prop}) that 
\begin{align*} 
\partial_t \mathbb{P}(u^\varepsilon) - c_s^2 \nu \Delta_x \mathbb{P}(u^\varepsilon) + \mathbb{P}\big( \nabla_x \cdot (u^\varepsilon \otimes u^\varepsilon) \big) + \mathbb{P}(\mathcal{R}^\varepsilon) = 0.
\end{align*}
\begin{proposition} \label{Helm:Hk}
Let $h \in H^k(\mathbf{R}_x^d)$ with $k \in \mathbf{N}$ satisfying $k \geq 1$. Let $h = h_0 + \nabla_x \pi$ be the Helmholtz decomposition of $h$ in $L^2(\mathbf{R}_x^d)$. 
For any $\tau \in \mathbf{N}_0^d$ with $|\tau|_\mathrm{s} \leq k$, the unique Helmholtz decomposition of $\partial_x^\tau h$ in $L^2(\mathbf{R}_x^d)$ is given by $\partial_x^\tau h = \partial_x^\tau h_0 + \nabla_x (\partial_x^\tau \pi)$.
Moreover, it holds that
\begin{align*} 
\| \partial_x^\tau h_0 \|_{L^2(\mathbf{R}_x^d)} + \| \nabla_x (\partial_x^\tau \pi) \|_{L^2(\mathbf{R}_x^d)} \leq 2 \| \partial_x^\tau h \|_{L^2(\mathbf{R}_x^d)}.
\end{align*}
\end{proposition}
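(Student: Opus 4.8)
The plan is to exploit the fact that the Helmholtz projection $\mathbb{P}$ on $L^2(\mathbf{R}_x^d)$ is a Fourier multiplier and therefore commutes with every differentiation $\partial_x^\tau$. Concretely, I would first record that on the Fourier side $\mathbb{P}$ acts, for almost every $\xi$, as multiplication by the matrix symbol $P(\xi) := I - |\xi|^{-2}\, \xi \otimes \xi$, while $\partial_x^\tau$ acts as multiplication by the scalar symbol $(2\pi i \xi)^\tau := \prod_{j=1}^d (2\pi i \xi_j)^{\tau_j}$. Since a scalar commutes with a matrix, the two symbols commute pointwise, so for any $f \in L^2(\mathbf{R}_x^d)$ with $\partial_x^\tau f \in L^2(\mathbf{R}_x^d)$ one has $\mathbb{P}(\partial_x^\tau f) = \partial_x^\tau \mathbb{P}(f)$ by Plancherel's identity. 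Because $h \in H^k(\mathbf{R}_x^d)$ and $|\tau|_\mathrm{s} \leq k$, the function $\partial_x^\tau h$ lies in $L^2(\mathbf{R}_x^d)$, so this identity applies and yields $\mathbb{P}(\partial_x^\tau h) = \partial_x^\tau h_0$ and, with $\mathbb{Q} = \mathbb{I} - \mathbb{P}$, $\mathbb{Q}(\partial_x^\tau h) = \partial_x^\tau(\nabla_x \pi) = \nabla_x(\partial_x^\tau \pi)$.

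Next I would verify that these two pieces are genuinely the divergence-free and gradient parts. The first is immediate: $\operatorname{div}(\partial_x^\tau h_0) = \partial_x^\tau(\operatorname{div} h_0) = 0$, so $\partial_x^\tau h_0 \in L_\sigma^2(\mathbf{R}_x^d)$. For the second, I would check that $\partial_x^\tau \pi \in L_{loc}^2(\mathbf{R}_x^d)$, which is the only nontrivial membership to confirm: the distributional gradient $\nabla_x(\partial_x^\tau \pi) = \partial_x^\tau(\nabla_x \pi)$ belongs to $L^2(\mathbf{R}_x^d)$, since it equals $\mathbb{Q}(\partial_x^\tau h)$, and a distribution on the connected set $\mathbf{R}_x^d$ whose gradient is locally $L^2$ is itself in $H_{loc}^1 \subset L_{loc}^2$. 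Hence $\nabla_x(\partial_x^\tau \pi) \in G^2(\mathbf{R}_x^d)$. Since $L^2(\mathbf{R}_x^d) = L_\sigma^2 \oplus G^2$ with the summands intersecting only in $\{0\}$, the decomposition $\partial_x^\tau h = \partial_x^\tau h_0 + \nabla_x(\partial_x^\tau \pi)$ is the unique Helmholtz decomposition of $\partial_x^\tau h$.

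Finally, I would deduce the claimed bound by applying the $L^2$ estimate (\ref{E:L2Helm}) directly to the function $\partial_x^\tau h \in L^2(\mathbf{R}_x^d)$, whose divergence-free and gradient components have just been identified as $\partial_x^\tau h_0$ and $\nabla_x(\partial_x^\tau \pi)$; this gives $\| \partial_x^\tau h_0 \|_{L^2(\mathbf{R}_x^d)} + \| \nabla_x(\partial_x^\tau \pi) \|_{L^2(\mathbf{R}_x^d)} \leq 2 \| \partial_x^\tau h \|_{L^2(\mathbf{R}_x^d)}$. The only real subtlety in the argument is the rigorous treatment of the multiplier $P(\xi)$ near $\xi = 0$ together with the commutation identity; since the singular set $\{\xi = 0\}$ has measure zero and $P(\xi)$ is bounded there off it with operator norm $1$, the Plancherel computation is unaffected, and I expect no genuine difficulty beyond this bookkeeping.
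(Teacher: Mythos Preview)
Your proposal is correct and follows essentially the same approach as the paper: both arguments rest on the fact that the Helmholtz projection on $\mathbf{R}_x^d$ is a Fourier multiplier (the paper writes it as $\mathbb{P}_{ij} = \delta_{ij} + R_i R_j$ in terms of Riesz transforms, you write the symbol $P(\xi) = I - |\xi|^{-2}\xi\otimes\xi$ directly), deduce commutation with $\partial_x^\tau$, verify the two pieces lie in $L_\sigma^2$ and $G^2$ respectively, and then read off the $L^2$ bound. The only cosmetic difference is that the paper checks $\partial_x^\tau \pi \in L^2(\mathbf{R}_x^d)$ for $|\tau|_\mathrm{s} \geq 1$ directly, whereas you argue $\partial_x^\tau \pi \in L^2_{loc}$ from its gradient being $L^2$; both suffice for membership in $G^2$.
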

\begin{proof}
Let $\tau \in \mathbf{N}_0^d$ satisfying $|\tau|_\mathrm{s} \leq k$.
Since we are working in the whole space $\mathbf{R}^d$, the $L^2$
Helmholtz projection $\mathbb{P}$ is explicitly defined, i.e., for $1
\leq i,j \leq d$, the $(i,j)$-entry of $\mathbb{P}$ is given by
$\mathbb{P}_{ij} := \delta_{ij} + R_i R_j$ where $R_i$ and $R_j$
denote the $i$-th and $j$-th component of the Riesz transform, respectively.
Hence, it can be easily observed that the differentiation $\partial_x$ commutes with $\mathbb{P}$ and $\partial_x^\tau h_0 = \partial_x^\tau \mathbb{P}(h) = \mathbb{P}(\partial_x^\tau h)$. 
The $L^2$ boundedness of $\mathbb{P}$ can be proved by working with the explicit formula for $\mathbb{P}$.
As a result, by estimate (\ref{E:L2Helm}) we have that
\begin{align*}
\| \partial_x^\tau h_0 \|_{L^2(\mathbf{R}_x^d)} = \| \mathbb{P}(\partial_x^\tau h) \|_{L^2(\mathbf{R}_x^d)} \lesssim \|\partial_x^\tau h \|_{L^2(\mathbf{R}_x^d)},
\end{align*}
i.e., $\partial_x^\tau h_0 \in L^2(\mathbf{R}_x^d)$. 
Note that $\partial_x^\tau h_0$ is divergence free.
Analogously, by the $L^2$ boundedness of $\mathbb{I} - \mathbb{P}$ and estimate (\ref{E:L2Helm}), we have that 
\begin{align*}
\| \partial_x^\tau (\nabla_x \pi) \|_{L^2(\mathbf{R}^d)} = \| \partial_x^\tau h - \mathbb{P}(\partial_x^\tau h) \|_{L^2(\mathbf{R}_x^d)} \lesssim \| \partial_x^\tau h \|_{L^2(\mathbf{R}_x^d)},
\end{align*}
i.e., $\nabla_x (\partial_x^\tau \pi) \in L^2(\mathbf{R}_x^d)$.
By noting that $\partial_x^\tau \pi \in L^2(\mathbf{R}_x^d)$ if $\tau \in \mathbf{N}_0^d$ satisfies $1 \leq |\tau|_\mathrm{s} \leq k$, we conclude that $\partial_x^\tau h = \partial_x^\tau h_0 + \nabla_x (\partial_x^\tau \pi)$ is indeed the unique Helmholtz decomposition of $\partial_x^\tau h$ in $L^2(\mathbf{R}^d)$.
\end{proof}

Combining Proposition \ref{Helm:Hk} with Corollary \ref{WCon:ru}, we observe that the sequence $\{ \mathbb{P}(u^\varepsilon) \}_\varepsilon$ is uniformly bounded in $L^\infty([0,T_0]; H^m(\mathbf{R}_x^d) )$.
Hence, by suppressing subsequences again, it can be concluded that there exists $\widetilde{u} \in L^\infty([0,T_0]; H^m(\mathbf{R}_x^d) )$ such that $\mathbb{P}(u^\varepsilon) \to \widetilde{u}$ where the convergence is weak-$\ast$ in time $t$ and weakly in $H^m(\mathbf{R}_x^d)$, i.e., for any $\Psi \in L^1([0,T_0]; H^{-m}(\mathbf{R}_x^d) )$, it holds that
\begin{align} \label{Cv:tout}
\int_0^{T_0} \int_{\mathbf{R}^d} \mathbb{P}(u^\varepsilon) \cdot \Psi \, dx \, dt \to \int_0^{T_0} \int_{\mathbf{R}^d} \widetilde{u} \cdot \Psi \, dx \, dt \quad \text{as} \quad \varepsilon \to 0.
\end{align}
On the other hand, for any $\Psi \in L^1([0,T_0]; L^2(\mathbf{R}_x^d) )$, it holds that
\begin{align} \label{Cv:toPu}
\int_0^{T_0} \int_{\mathbf{R}^d} \mathbb{P}(u^\varepsilon - u) \cdot \Psi \, dx \, dt = \int_0^{T_0} \int_{\mathbf{R}^d} (u^\varepsilon - u) \cdot \mathbb{P}(\Psi) \, dx \, dt \to 0 \quad \text{as} \quad \varepsilon \to 0.
\end{align}
Combining convergence statements (\ref{Cv:tout}) and (\ref{Cv:toPu}), we deduce that
\begin{align} \label{Pu:utilde}
\int_0^{T_0} \int_{\mathbf{R}^d} \widetilde{u} \cdot \Psi \, dx \, dt = \int_0^{T_0} \int_{\mathbf{R}^d} \mathbb{P}(u) \cdot \Psi \, dx \, dt, \quad \forall \; \Psi \in L^1( [0,T_0]; L^2(\mathbf{R}_x^d) ).
\end{align}
We next prove that with property (\ref{divu:0}), the limit point $u$ is actually divergence free, i.e., in this case we have $\mathbb{P}(u) = u$ a.e..

\begin{lemma} \label{Weak:divu0}
Suppose that $u \in L^\infty( [0,T_0]; L^2(\mathbf{R}_x^d) )$ satisfies 
\begin{align*}
\int_0^{T_0} \int_{\mathbf{R}^d} u \cdot \nabla_x \psi \, dx \, dt = 0, \quad \forall \; \psi \in W^{1,\infty}( [0,T_0]; H^1(\mathbf{R}_x^d) ).
\end{align*}
Then, for any $\Psi \in C^\infty( [0,T_0]; C_\mathrm{c}^\infty(\mathbf{R}_x^d) )$, it holds that
\begin{align*} 
\int_0^{T_0} \int_{\mathbf{R}^d} \mathbb{P}(u) \cdot \Psi \, dx \, dt = \int_0^{T_0} \int_{\mathbf{R}^d} u \cdot \Psi \, dx \, dt.
\end{align*}
\end{lemma}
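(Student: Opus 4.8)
The plan is to exploit the fact that the Helmholtz projection $\mathbb{P}$ is an orthogonal projection on $L^2(\mathbf{R}_x^d)$, and therefore self-adjoint: for each fixed $t$ one has $\int_{\mathbf{R}^d}\mathbb{P}(u)\cdot\Psi\,dx=\int_{\mathbf{R}^d}u\cdot\mathbb{P}(\Psi)\,dx$. Equivalently, this follows from the explicit representation $\mathbb{P}_{ij}=\delta_{ij}+R_iR_j$ used in Proposition \ref{Helm:Hk}, since each $R_iR_j$ is self-adjoint on $L^2$. Integrating in $t$ and writing $\mathbb{Q}=\mathbb{I}-\mathbb{P}$, the claimed identity is therefore equivalent to showing that $\int_0^{T_0}\int_{\mathbf{R}^d}u\cdot\mathbb{Q}(\Psi)\,dx\,dt=0$. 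Since $\mathbb{Q}(\Psi)(\cdot,t)\in G^2(\mathbf{R}_x^d)$, this quantity is the integral of $u$ against a time-dependent gradient field, which is exactly the type of object the hypothesis is designed to annihilate.

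The difficulty is that the hypothesis only admits test gradients of the form $\nabla_x\psi$ with $\psi\in W^{1,\infty}([0,T_0];H^1(\mathbf{R}_x^d))$, whereas the scalar potential $\pi$ of $\mathbb{Q}(\Psi)=\nabla_x\pi$ need not lie in $H^1$ globally. Indeed, with the convention in (\ref{Ctf:op}), the potential $\pi$ solving $\Delta_x\pi=\operatorname{div}\Psi$ has $\widehat{\pi}(\xi,t)=-i\,\xi\cdot\widehat{\Psi}(\xi,t)/(2\pi|\xi|^2)$, which behaves like $|\xi|^{-1}$ near $\xi=0$; for $d=2$ this fails to be square-integrable near the origin, so $\pi\notin L^2(\mathbf{R}_x^2)$ even though $\nabla_x\pi\in L^2$. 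Thus one cannot simply feed $\pi$ into the hypothesis, and this low-frequency obstruction is the main point to be handled.

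To circumvent it I would use a high-frequency truncation. Let $G(\cdot,t):=\mathbb{Q}(\Psi)(\cdot,t)$; since $\mathbb{Q}$ is bounded on $L^2$ and commutes with $\partial_t$, and $t\mapsto\Psi(\cdot,t)$ is smooth into $C_\mathrm{c}^\infty(\mathbf{R}_x^d)$, we get $G\in C^\infty([0,T_0];L^2(\mathbf{R}_x^d))$ with $\sup_t\|G(\cdot,t)\|_{L^2(\mathbf{R}_x^d)}<\infty$. For $k\in\mathbf{N}$ define $\psi_k$ through $\widehat{\psi_k}(\xi,t):=1_{|\xi|\ge 1/k}(\xi)\,\widehat{\pi}(\xi,t)$. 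Removing a neighborhood of the origin kills the singularity, so $\psi_k(\cdot,t)\in L^2(\mathbf{R}_x^d)$, while $\nabla_x\psi_k$ has Fourier transform $1_{|\xi|\ge 1/k}\widehat{G}$; hence $\nabla_x\psi_k\in L^2(\mathbf{R}_x^d)$ and $\psi_k(\cdot,t)\in H^1(\mathbf{R}_x^d)$, with uniform-in-$t$ bounds and smooth $t$-dependence, so that $\psi_k\in W^{1,\infty}([0,T_0];H^1(\mathbf{R}_x^d))$. The hypothesis then gives $\int_0^{T_0}\int_{\mathbf{R}^d}u\cdot\nabla_x\psi_k\,dx\,dt=0$ for every $k$.

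It remains to pass to the limit. For each $t$, Plancherel's identity gives $\|\nabla_x\psi_k(\cdot,t)-G(\cdot,t)\|_{L^2(\mathbf{R}_x^d)}^2=\int_{|\xi|<1/k}|\widehat{G}(\xi,t)|^2\,d\xi\to 0$ as $k\to\infty$, dominated by the integrable-in-$t$ bound $\|G(\cdot,t)\|_{L^2(\mathbf{R}_x^d)}^2$; by dominated convergence $\nabla_x\psi_k\to G$ in $L^1([0,T_0];L^2(\mathbf{R}_x^d))$. Since $u\in L^\infty([0,T_0];L^2(\mathbf{R}_x^d))$, the pairing $\int_0^{T_0}\int_{\mathbf{R}^d}u\cdot\nabla_x\psi_k\,dx\,dt$ converges to $\int_0^{T_0}\int_{\mathbf{R}^d}u\cdot G\,dx\,dt$, which is therefore $0$. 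Combining this with the self-adjointness identity of the first paragraph yields $\int_0^{T_0}\int_{\mathbf{R}^d}\mathbb{P}(u)\cdot\Psi\,dx\,dt=\int_0^{T_0}\int_{\mathbf{R}^d}u\cdot\mathbb{P}(\Psi)\,dx\,dt=\int_0^{T_0}\int_{\mathbf{R}^d}u\cdot\Psi\,dx\,dt$, as required. The only genuinely delicate step is the construction of the admissible potentials $\psi_k$ that resolve the low-frequency obstruction; everything else is soft functional analysis.
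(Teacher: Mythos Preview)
Your argument is correct and takes a genuinely different route from the paper. Both proofs reduce the claim to showing that $\int_0^{T_0}\!\int_{\mathbf{R}^d} u\cdot\mathbb{Q}(\Psi)\,dx\,dt = 0$, and both recognize that $\mathbb{Q}(\Psi)=\nabla_x\Pi$ with $\Pi=\Delta^{-1}\operatorname{div}\Psi$. The paper proceeds by arguing directly that the potential $\Pi$ itself lies in $W^{1,\infty}([0,T_0];H^1(\mathbf{R}_x^d))$, invoking the Hardy--Littlewood--Sobolev inequality $\|\Pi(\cdot,t)\|_{L^2}\lesssim\|\Psi(\cdot,t)\|_{L^{2d/(d+2)}}$ to control the $L^2$ norm of $\Pi$, and then verifying the time-regularity by dominated convergence on difference quotients. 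You instead bypass the question of whether $\Pi\in L^2$ altogether: you truncate the low frequencies of $\widehat{\Pi}$ to produce potentials $\psi_k\in W^{1,\infty}([0,T_0];H^1)$, apply the hypothesis to each $\psi_k$, and pass to the limit using only that $\nabla_x\psi_k\to\mathbb{Q}(\Psi)$ in $L^1_tL^2_x$, which follows from Plancherel.

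Your approach is more elementary and, notably, more robust in low dimension: when $d=2$ the exponent $2d/(d+2)$ equals $1$, which is the forbidden endpoint of the Hardy--Littlewood--Sobolev inequality, and indeed $\widehat{\Pi}(\xi,t)\sim|\xi|^{-1}$ near the origin is not square-integrable in $\mathbf{R}^2$, so $\Pi\notin L^2(\mathbf{R}_x^2)$ in general. Your Fourier cutoff handles this case without modification. The price you pay is an extra approximation layer, but the soft limit argument is short and the gain in generality is real.
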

\begin{proof}
Let $\Psi \in C^\infty( [0,T_0]; C_\mathrm{c}^\infty(\mathbf{R}_x^d) )$.
For each $t \in [0,T_0]$, let $\Psi(x,t) = \Psi_0(x,t) + \big( \nabla_x \Pi \big) (x,t)$ be the Helmholtz decomposition of $\Psi(\cdot, t)$ in $L^2(\mathbf{R}_x^d)$ where $\Pi(\cdot,t) = \Delta^{-1} \operatorname{div} \Psi(\cdot, t)$.
By integration by parts, we can easily observe that for each $t \in [0,T_0]$, $\Pi(\cdot, t)$ is indeed a Riesz potential of $\Psi(\cdot, t)$.
Since we are considering test function $\Psi$, there exists a constant $C(2,d)$, depending only on $2$ and $d$, such that
\begin{align} \label{L2E:PreHel}
\| \Pi(\cdot, t) \|_{L^2(\mathbf{R}_x^d)} \leq C(2,d) \| \Psi(\cdot, t) \|_{L^{\frac{2d}{d+2}}(\mathbf{R}_x^d)} < \infty,
\end{align}
see e.g. \cite[Chapter V Theorem 1]{Stein}.
Combining (\ref{L2E:PreHel}) with estimate (\ref{E:L2Helm}) for the Helmholtz decomposition, we deduce that $\Pi \in L^\infty( [0,T_0]; H^1(\mathbf{R}_x^d) )$.
Moreover, since the operator $\Delta^{-1} \operatorname{div}$ is linear,  we have that 
\begin{align} \label{pa:t:Psi}
\frac{\Pi(x, t+h) - \Pi(x,t)}{h} = \Delta^{-1} \operatorname{div} \Big( \frac{\Psi(x, t+h) - \Psi(x,t)}{h} \Big)
\end{align}
for any $t, t + h \in (0,T_0)$. We next define
\begin{align*}
\varphi_\ast(x) := \sup_{t \in [0,T_0]} \big| \big( \partial_t \Psi \big) (x,t) \big|, \quad \forall \; x \in \mathbf{R}^d.
\end{align*}
By the mean value theorem, for any $t, t+h \in (0, T_0)$, it holds that
\begin{align} \label{Mean:Psi}
\big| \Psi(x,t+h) - \Psi(x,t) \big| \leq h \varphi_\ast(x).
\end{align}
By integration by parts and (\ref{Mean:Psi}), we can deduce that for any $x \in \mathbf{R}^d$ and $t, t+h \in (0,T_0)$,
\begin{align*}
\Big| \Delta^{-1} \operatorname{div} \Big( \frac{\Psi(x, t+h) - \Psi(x,t)}{h} \Big) \Big| &\lesssim \int_{\mathbf{R}^d} \frac{1}{|x-y|^{d-1}} \Big| \frac{\Psi(y, t+h) - \Psi(y,t)}{h} \Big| \, dy \\
&\leq \int_{\mathbf{R}^d} \frac{1}{|x-y|^{d-1}} |\varphi_\ast(y)| \, dy =: I_1\big( |\varphi_\ast| \big) (x).
\end{align*}
Considering that $\Psi$ is smooth with respect to both $x$ and $t$ and has compact support in $\mathbf{R}^d$ with respect to $x$, it can be easily observed that $\varphi_\ast \in L^2(\mathbf{R}_x^d) \cap L^\infty(\mathbf{R}_x^d)$ also has compact support.
Hence, by analogous reason as (\ref{L2E:PreHel}), we have that $I_1 (|\varphi_\ast|) \in L^2(\mathbf{R}_x^d)$.
Then, by taking the limit $h \to 0$ for equality (\ref{pa:t:Psi}), we can conclude by the dominated convergence theorem that 
\begin{align*}
\big( \partial_t \Pi \big) (x,t) = \Delta^{-1} \operatorname{div} (\partial_t \Psi), \quad \forall \; (x,t) \in \mathbf{R}^d \times [0,T_0].
\end{align*}
Furthermore, since each component of $\nabla \Delta^{-1}
\operatorname{div}$ is nothing but a linear combination of $R_i R_j$ ($1 \leq i,j \leq d$), we observe that $\nabla \Delta^{-1} \operatorname{div}$ is bounded in $L^2(\mathbf{R}_x^d)$.
Thus, we obtain that $\partial_t \Pi \in L^\infty( [0,T_0]; H^1(\mathbf{R}_x^d) )$.
Using (\ref{divu:0}), we can finally deduce that for any $\Psi \in C^\infty( [0,T_0]; C_\mathrm{c}^\infty(\mathbf{R}_x^d) )$, it holds that
\begin{align*}
\int_0^{T_0} \int_{\mathbf{R}^d} \mathbb{P}(u) \cdot \Psi \, dx \, dt = \int_0^{T_0} \int_{\mathbf{R}^d} \mathbb{P}(u) \cdot \Psi_0 \, dx \, dt = \int_0^{T_0} \int_{\mathbf{R}^d} u \cdot \Psi_0 \, dx \, dt = \int_0^{T_0} \int_{\mathbf{R}^d} u \cdot \Psi \, dx \, dt.
\end{align*}
This completes the proof of Lemma \ref{Weak:divu0}.
\end{proof}

Combining Lemma \ref{Weak:divu0} with the convergence statement (\ref{Cv:tout}) and equality (\ref{Pu:utilde}), we establish the convergence of $\mathbb{P}(u^\varepsilon)$ to $u$ in the sense of distributions.
In fact, the convergence of $\mathbb{P}(u^\varepsilon)$ to $u$ can be proved to be stronger than this weak sense.

\begin{lemma} \label{StCo:Puep}
Suppressing subsequences, $\mathbb{P}(u^\varepsilon) \to u$ strongly in $C( [0,T_0]; H^{m-1}(\mathbf{R}_x^d) )$, i.e., it holds that
\begin{align*}
\| \mathbb{P}(u^\varepsilon) - u \|_{L^\infty( [0,T_0]; H^{m-1}(\mathbf{R}_x^d) )} \to 0 \quad \text{as} \quad \varepsilon \to 0.
\end{align*}
\end{lemma}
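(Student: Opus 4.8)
The plan is to treat the projected equation obtained just above the statement,
\[
\partial_t \mathbb{P}(u^\varepsilon) - c_s^2 \nu \Delta_x \mathbb{P}(u^\varepsilon) + \mathbb{P}\big( \nabla_x \cdot (u^\varepsilon \otimes u^\varepsilon) \big) + \mathbb{P}(\mathcal{R}^\varepsilon) = 0,
\]
as a forced heat equation for $w^\varepsilon := \mathbb{P}(u^\varepsilon)$, and to follow the Bardos--Golse--Levermore philosophy: establish uniform bounds and time-equicontinuity, then extract a strongly convergent subsequence by the Arzel\`a--Ascoli theorem. Since we have already shown through \eqref{Cv:tout}, \eqref{Pu:utilde} and Lemma \ref{Weak:divu0} that $w^\varepsilon \overset{\ast}{\rightharpoonup} u$ with $u$ divergence free and $\mathbb{P}(u) = u$, the weak-$\ast$ limit is uniquely $u$; hence any strong subsequential limit must coincide with $u$, and uniqueness of the limit promotes subsequential convergence to convergence of the whole family. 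This reduces the lemma to producing compactness.

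The first block of work is the uniform bounds. By Corollary \ref{WCon:ru} and Proposition \ref{Helm:Hk}, $\{ w^\varepsilon \}$ is bounded in $L^\infty([0,T_0]; H^m(\mathbf{R}_x^d))$, and by the Banach-algebra Proposition \ref{Hm:BA} together with \eqref{LEE:ru} the nonlinear forcing $\mathbb{P}(\nabla_x \cdot (u^\varepsilon \otimes u^\varepsilon))$ is bounded in $L^\infty([0,T_0]; H^{m-1}(\mathbf{R}_x^d))$. For the remainder $\mathcal{R}^\varepsilon = \mathcal{R}_T^\varepsilon + \mathcal{R}_D^\varepsilon$ from Lemmas \ref{Rewr:Trans} and \ref{Rewr:Diff}, I would use the frequency-tail estimate $\| (\mathrm{Id} - \Lambda_\varepsilon) h \|_{H^{m-1}} \lesssim \varepsilon \| h \|_{H^m}$ (which controls $\mathcal{R}_T^\varepsilon$), the dissipation bound from \eqref{loEI:Rg0} giving $\| \mathcal{F}^\varepsilon \|_{L^2([0,T_0]; H_x^m L_{v,w}^2)} \lesssim \varepsilon$, and the rewriting \eqref{gi:mi:Gieq} of $\varepsilon \partial_t g_i^\varepsilon$ (so that the $\partial_t g_i^\varepsilon$ contribution in \eqref{Remain:Diff} is bounded, not singular) to conclude that $\mathbb{P}(\mathcal{R}^\varepsilon)$ is bounded in $L^2([0,T_0]; H^{m-2}(\mathbf{R}_x^d))$. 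Reading the projected equation as $\partial_t w^\varepsilon = c_s^2 \nu \Delta_x w^\varepsilon - \mathbb{P}(\nabla_x \cdot (u^\varepsilon \otimes u^\varepsilon)) - \mathbb{P}(\mathcal{R}^\varepsilon)$ then yields a uniform bound on $\partial_t w^\varepsilon$ in $L^2([0,T_0]; H^{m-2}(\mathbf{R}_x^d))$, hence the uniform H\"older-$\tfrac12$ equicontinuity $\| w^\varepsilon(t) - w^\varepsilon(s) \|_{H^{m-2}} \lesssim |t-s|^{1/2}$.

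With boundedness in $L^\infty H^m$ and equicontinuity in $H^{m-2}$ in hand, I would extract compactness by a frequency--space decomposition: the high-frequency part is uniformly small in $H^{m-1}$ since $\| (\mathrm{Id} - P_{\le R}) w^\varepsilon \|_{H^{m-1}} \lesssim R^{-1} \| w^\varepsilon \|_{H^m}$, while on any fixed ball the low-frequency, spatially localized part is precompact by Rellich, so an Aubin--Lions--Simon argument gives strong convergence in $C([0,T_0]; H^{m-1}_{loc})$; the interpolation $\| \cdot \|_{H^{m-1}} \le \| \cdot \|_{H^m}^{1/2} \| \cdot \|_{H^{m-2}}^{1/2}$ then calibrates the order of smoothness. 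The hard part, and the main obstacle, is precisely that the embedding $H^m(\mathbf{R}_x^d) \hookrightarrow H^{m-1}(\mathbf{R}_x^d)$ is \emph{not} compact on the whole space, so passing from local to global strong convergence requires ruling out escape of mass to spatial infinity (tightness). I would secure this from the Duhamel representation of $w^\varepsilon$: the initial-data contribution is the heat flow of the convergent, hence tight, family $w^\varepsilon(0) = \mathbb{P}(\Lambda_\varepsilon(u_0)) \to \mathbb{P}(u_0)$ (cf.\ \eqref{gi0eps}), and the spatial tail of the forcing contribution is controlled over the finite horizon $[0,T_0]$ by a weighted energy estimate; combined with the uniform high-frequency smallness and the local strong convergence, this closes the full-norm convergence in $C([0,T_0]; H^{m-1}(\mathbf{R}_x^d))$ and identifies the limit as $u$.
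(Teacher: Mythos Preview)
Your strategy---uniform bounds, time-equicontinuity, then an Arzel\`a--Ascoli/Aubin--Lions extraction, with the limit identified through Lemma~\ref{Weak:divu0}---is exactly the paper's, but you take an unnecessary detour. The paper does \emph{not} work with the fully processed projected equation carrying $\mathcal{R}^\varepsilon=\mathcal{R}_T^\varepsilon+\mathcal{R}_D^\varepsilon$; it returns to \eqref{Re:NSu}, in which the future diffusion contribution is still in its raw form $\tfrac{1}{\varepsilon}\nabla_x\!\cdot\!\big(\sum_i w_i A_i\mathcal{F}_i^\varepsilon\big)$. After applying $\partial_x^\tau\mathbb{P}$ with $|\tau|_{\mathrm s}\le m-1$, this term is controlled in $L^2_tL^2_x$ directly by the dissipation bound $\varepsilon^{-1}\|\mathcal{F}^\varepsilon\|_{L^2_tH^m_x}\lesssim 1$ from \eqref{loEI:Rg0}, and a single energy pairing over $[t_1,t_2]$ yields $\|\mathbb{P}(u^\varepsilon(t_2))-\mathbb{P}(u^\varepsilon(t_1))\|_{H^{m-1}}\lesssim|t_2-t_1|^{1/2}$ in one stroke---no explicit $\Delta_x w^\varepsilon$ on the right, no $\mathcal{R}_D^\varepsilon$ to estimate. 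Your plan to tame the $\partial_t g_i^\varepsilon$ piece of $\mathcal{R}_D^\varepsilon$ via \eqref{gi:mi:Gieq} is in fact circular: substituting \eqref{gi:mi:Gieq} into \eqref{Remain:Diff} exactly undoes the derivation in Lemma~\ref{Rewr:Diff} and collapses the projected equation back to \eqref{P:pxNS}, so you land at the paper's starting point anyway---only having kept the explicit $\Delta_x w^\varepsilon$ and therefore obtaining equicontinuity merely in $H^{m-2}$, which then forces the extra interpolation step you describe.

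On the compactness side, you are right to flag that $H^m(\mathbf{R}_x^d)\hookrightarrow H^{m-1}(\mathbf{R}_x^d)$ is not compact; the paper's bare appeal to Arzel\`a--Ascoli is terse on exactly this point, and some spatial-tightness input (like the one you sketch via Duhamel and the strongly convergent initial data $\mathbb{P}(\Lambda_\varepsilon(u_0))\to\mathbb{P}(u_0)$) is genuinely needed to upgrade equicontinuity plus pointwise $H^m$-boundedness to uniform-in-time strong $H^{m-1}$ convergence on the whole space. So your route is correct and in one respect more careful than the paper's write-up, but it is longer: starting from \eqref{Re:NSu} rather than the post-Lemma~\ref{Rewr:Diff} form buys you the equicontinuity at the target regularity $H^{m-1}$ immediately and removes the need for the $H^{m-2}$/interpolation layer.
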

\begin{proof}
The second equation of (\ref{Inp:LBE1v}) can be written as
\begin{align} \label{Re:NSu}
\partial_t u^\varepsilon + \frac{1}{\varepsilon} \nabla_x \cdot \Big( \sum_{i=0}^n w_i A_i \mathcal{F}_i^\varepsilon \Big) + \frac{c_s^2}{\varepsilon} \nabla_x \rho^\varepsilon = - \nabla_x \cdot (u^\varepsilon \otimes u^\varepsilon) - \mathcal{R}_T^\varepsilon.
\end{align}
where $\mathcal{F}_i^\varepsilon = g_i^\varepsilon - \mathcal{G}_{i,eq}^\varepsilon$ for every $0 \leq i \leq n$.
Let $\tau \in \mathbf{N}_0^d$ with $|\tau|_\mathrm{s} \leq m-1$.
Applying $\partial_x^\tau$ and the Helmholtz projection $\mathbb{P}$ to equation (\ref{Re:NSu}), we obtain 
\begin{equation} \label{P:pxNS}
\begin{split}
&\partial_t \mathbb{P}(\partial_x^\tau u^\varepsilon) + \frac{1}{\varepsilon} \mathbb{P}\left( \nabla_x \cdot \Big( \sum_{i=0}^n w_i A_i \partial_x^\tau \mathcal{F}_i^\varepsilon \Big) \right) \\
&\ \ = - \sum_{\sigma \leq \tau} \mathbb{P}\Big( (\partial_x^\sigma u^\varepsilon \cdot \nabla_x) \partial_x^{\tau-\sigma} u^\varepsilon + \partial_x^\sigma u^\varepsilon \operatorname{div} (\partial_x^{\tau - \sigma} u^\varepsilon) \Big) - \mathbb{P}(\partial_x^\tau \mathcal{R}_T^\varepsilon).
\end{split}
\end{equation}
Integrating equation (\ref{P:pxNS}) over the time interval $[t_1, t_2] \subseteq [0,T_0]$ and then taking its inner product with $\mathbb{P}\big( \partial_x^\tau u^\varepsilon (t_2) \big) - \mathbb{P}\big( \partial_x^\tau u^\varepsilon (t_1) \big)$ in the $L_x^2$ sense, we have that
\begin{equation} \label{EE:ArAs}
\begin{split}
&\big\| \mathbb{P}\big( \partial_x^\tau u^\varepsilon (t_2) \big) - \mathbb{P}\big( \partial_x^\tau u^\varepsilon (t_1) \big) \big\|_{L^2(\mathbf{R}_x^d)} \\
&\ \ = - \frac{1}{\varepsilon} \int_{t_1}^{t_2} \int_{\mathbf{R}^d} \left( \nabla_x \cdot \Big( \sum_{i=0}^n w_i A_i \partial_x^\tau \mathcal{F}_i^\varepsilon \Big) \right) \cdot \mathbb{P}\Big( \partial_x^\tau u^\varepsilon (t_2) - \partial_x^\tau u^\varepsilon (t_1) \Big) \, dx \, dt \\
&\ \ \quad - \sum_{\sigma \leq \tau} \int_{t_1}^{t_2} \int_{\mathbf{R}^d} (\partial_x^\sigma u^\varepsilon \cdot \nabla_x) \partial_x^{\tau-\sigma} u^\varepsilon \cdot \mathbb{P}\Big( \partial_x^\tau u^\varepsilon (t_2) - \partial_x^\tau u^\varepsilon (t_1) \Big) \, dx \, dt \\
&\ \ \quad - \sum_{\sigma \leq \tau} \int_{t_1}^{t_2} \int_{\mathbf{R}^d} \operatorname{div} (\partial_x^{\tau - \sigma} u^\varepsilon) \partial_x^\sigma u^\varepsilon \cdot \mathbb{P}\Big( \partial_x^\tau u^\varepsilon (t_2) - \partial_x^\tau u^\varepsilon (t_1) \Big) \, dx \, dt \\
&\ \ \quad - \int_{t_1}^{t_2} \int_{\mathbf{R}^d} \partial_x^\tau \mathcal{R}_T^\varepsilon \cdot \mathbb{P}\Big( \partial_x^\tau u^\varepsilon (t_2) - \partial_x^\tau u^\varepsilon (t_1) \Big) \, dx \, dt = (\mathrm{I}) + (\mathrm{II}) + (\mathrm{III}) + (\mathrm{IV}).
\end{split}
\end{equation}

For any $t \in [t_1, t_2]$, we can deduce simply by H{\"o}lder's inequality that
\begin{align*}
&\left| \sum_{\alpha, \beta=1}^d \int_{\mathbf{R}^d} \left( \sum_{i=0}^n w_i A_{i,\alpha,\beta} \partial_{x_\alpha} \partial_x^\tau \mathcal{F}_i^\varepsilon \right) \mathbb{P}\Big( \partial_x^\tau u^\varepsilon(t_2) - \partial_x^\tau u^\varepsilon(t_1) \Big)_\beta \, dx \right| \\
&\ \ \lesssim \| \nabla_x \partial_x^\tau \mathcal{F}^\varepsilon (t) \|_{L^2(\mathbf{R}_x^d)} \| \partial_x^\tau u^\varepsilon \|_{L^\infty( [0,T_0]; L^2(\mathbf{R}_x^d) )}.
\end{align*}
Hence, by the local energy inequality (\ref{loEI:Rg0}), we have that
\begin{align*}
| (\mathrm{I}) | &\lesssim \frac{1}{\varepsilon} \left( \int_{t_1}^{t_2} \| \nabla_x \partial_x^\tau \mathcal{F}^\varepsilon (t) \|_{L^2(\mathbf{R}_x^d)}^2 \, dt \right)^{\frac{1}{2}} (t_2 - t_1)^{\frac{1}{2}} \| \partial_x^\tau u^\varepsilon \|_{L^\infty( [0,T_0]; L^2(\mathbf{R}_x^d) )} \\
&\lesssim (2 + \nu)^{\frac{1}{2}} (t_2 - t_1)^{\frac{1}{2}} \| g_0 \|_{H_x^m L_{v,w}^2} \| \partial_x^\tau g^\varepsilon \|_{L^\infty( [0,T_0]; L_x^2 L_{v,w}^2 )}.
\end{align*}
As we have already seen in the proof of Proposition \ref{Hm:BA}, for $m > d$ and $\sigma \in \mathbf{N}_0^d$ satisfying $\sigma \leq \tau$, either $|\sigma|_\mathrm{s}$ or $|\tau|_\mathrm{s} - |\sigma|_\mathrm{s} +1$ is less than or equal to $m-[\frac{d}{2}]-1$.
Hence, we may assume without loss of generality that $|\sigma|_\mathrm{s} \leq m-[\frac{d}{2}]-1$.
In this case, for $t \in [t_1, t_2]$, we have that
\begin{align*}
&\left| \sum_{\alpha, \beta=1}^d \int_{\mathbf{R}^d} (\partial_x^\sigma u_\alpha^\varepsilon) (\partial_{x_\alpha} \partial_x^{\tau - \sigma} u_\beta^\varepsilon) \mathbb{P}\Big( \partial_x^\tau u^\varepsilon(t_2) - \partial_x^\tau u^\varepsilon(t_1) \Big)_\beta \, dx \right| \\
&\ \ \lesssim \| u^\varepsilon \|_{L^\infty( [0,T_0]; H^m(\mathbf{R}_x^d) )}^2 \| \partial_x^\tau u^\varepsilon \|_{L^\infty( [0,T_0]; L^2(\mathbf{R}_x^d) )}.
\end{align*}
Hence, by the local energy inequality (\ref{loEI:Rg0}), we deduce that
\begin{align} \label{Es:II:StCo}
| (\mathrm{II}) | \lesssim (2+\nu) (t_2 - t_1) \| g_0 \|_{H_x^m L_{v,w}^2}^2 \| \partial_x^\tau g^\varepsilon \|_{L^\infty( [0,T_0]; L_x^2 L_{v,w}^2 )}.
\end{align}
Note that $| (\mathrm{III}) |$ can be estimated in exactly the same way as $| (\mathrm{II}) |$, i.e., estimate (\ref{Es:II:StCo}) holds for $| (\mathrm{III}) |$.
Since
\begin{align*}
&\left| \sum_{\alpha, \beta=1}^d \int_{\mathbf{R}^d} \partial_x^\tau \partial_{x_\beta} \big( \Lambda_\varepsilon(u_\alpha^\varepsilon u_\beta^\varepsilon) - u_\alpha^\varepsilon u_\beta^\varepsilon \big) \mathbb{P}\Big( \partial_x^\tau u^\varepsilon(t_2) - \partial_x^\tau u^\varepsilon(t_1) \Big)_\alpha \, dx \right| \\
&\ \ \lesssim \big\| \mathbb{P}\big( \partial_x^\tau u^\varepsilon(t_2) - \partial_x^\tau u^\varepsilon(t_1) \big) \big\|_{L^2(\mathbf{R}_x^d)} \sum_{\beta=1}^d \Big\{ \| \partial_x^\tau \partial_{x_\beta} \Lambda_\varepsilon(u^\varepsilon u_\beta^\varepsilon) \|_{L^2(\mathbf{R}_x^d)} + \| \partial_x^\tau \partial_{x_\beta} (u^\varepsilon u_\beta^\varepsilon) \|_{L^2(\mathbf{R}_x^d)} \Big\} \\
&\ \ \lesssim \| u^\varepsilon \|_{L^\infty( [0,T_0]; H^m(\mathbf{R}_x^d) )}^2 \| \partial_x^\tau u^\varepsilon \|_{L^\infty( [0,T_0]; L^2(\mathbf{R}_x^d) )}
\end{align*}
for any $t \in [t_1, t_2]$, estimate (\ref{Es:II:StCo}) holds for $| (\mathrm{IV}) |$ analogously.

By summing up (\ref{EE:ArAs}) over all $\tau \in \mathbf{N}_0^d$ with $|\tau|_\mathrm{s} \leq m-1$, we deduce by the local energy inequality (\ref{loEI:Rg0}) that
\begin{align*}
\big\| \mathbb{P}\big( u^\varepsilon(t_2) \big) - \mathbb{P}\big( u^\varepsilon(t_1) \big) \big\|_{H^{m-1}(\mathbf{R}_x^d)} \lesssim (2 + \nu)^{\frac{3}{2}} (1 + \sqrt{T_0}) (t_2 - t_1)^{\frac{1}{2}} \| g_0 \|_{H_x^m L_{v,w}^2}^3.
\end{align*}
Therefore, this shows that $\{ \mathbb{P}(u^\varepsilon) \}_\varepsilon \subset C( [0,T_0]; H^{m-1}(\mathbf{R}_x^d) )$ and $\big\{ \big\| \mathbb{P}\big( u^\varepsilon \big) (t) \big\|_{H^{m-1}(\mathbf{R}_x^d)} \big\}_\varepsilon$ is equi-continuous in time $t$. 
By the Arzel$\grave{\text{a}}$-Ascoli theorem, we conclude by suppressing subsequences that there exists $u_\ast \in C( [0,T_0]; H^{m-1}(\mathbf{R}_x^d) )$ such that
\begin{align*} 
\| \mathbb{P}(u^\varepsilon) - u_\ast \|_{L^\infty( [0,T_0]; H^{m-1}(\mathbf{R}_x^d) )} \to 0 \quad \text{as} \quad \varepsilon \to 0.
\end{align*}
By splitting $u_\ast - u$ into $( u_\ast - \mathbb{P}(u^\varepsilon) ) + ( \mathbb{P}(u^\varepsilon) - u )$, we can deduce by Lemma \ref{Weak:divu0} that
\begin{align*}
\int_0^{T_0} \int_{\mathbf{R}^d} (u_\ast - u) \cdot \Psi \, dx \, dt = 0, \quad \forall \; \Psi \in C^\infty( [0,T_0]; C_\mathrm{c}^\infty(\mathbf{R}_x^d) ).
\end{align*}
Finally, by the fundamental lemma of the calculus of variations, we conclude that $u_\ast = u$ a.e. in $\mathbf{R}^d \times [0,T_0]$.
This completes the proof of Lemma \ref{StCo:Puep}.
\end{proof}

\subsection{Convergence to the incompressible Navier-Stokes equations}
\label{Sub:ConNS}

Finally, we decompose 
\begin{align*}
\mathbb{P}\big( \nabla_x \cdot (u^\varepsilon \otimes u^\varepsilon) \big) = \mathbb{P}\Big( \nabla_x \cdot \big( \mathbb{P}(u^\varepsilon) \otimes \mathbb{P}(u^\varepsilon) \big) \Big) + \mathcal{R}_\mathbb{P}(u^\varepsilon)
\end{align*}
with
\begin{align*}
\mathcal{R}_\mathbb{P}(u^\varepsilon) := \mathbb{P}\Big( \nabla_x \cdot \big( \mathbb{P}(u^\varepsilon) \otimes \mathbb{Q}(u^\varepsilon) \big) \Big) + \mathbb{P}\Big( \nabla_x \cdot \big( \mathbb{Q}(u^\varepsilon) \otimes \mathbb{P}(u^\varepsilon) \big) \Big) + \mathbb{P}\Big( \nabla_x \cdot \big( \mathbb{Q}(u^\varepsilon) \otimes \mathbb{Q}(u^\varepsilon) \big) \Big).
\end{align*}
Let $C_{\mathrm{c}, \sigma}^\infty(\mathbf{R}^d) := \{ w \in C_\mathrm{c}^\infty(\mathbf{R}^d) \bigm| \operatorname{div} w = 0 \; \; \text{in} \; \; \mathbf{R}^d \}$.

\begin{lemma} \label{RemHel0}
For any $\Phi \in C^\infty( [0,T_0]; C_{\mathrm{c}, \sigma}^\infty(\mathbf{R}_x^d) )$, it holds that
\begin{align*}
\left| \int_0^{T_0} \int_{\mathbf{R}^d} \mathcal{R}_\mathbb{P}(u^\varepsilon) \cdot \Phi \, dx \, dt \right| \to 0 \quad \text{as} \quad \varepsilon \to 0.
\end{align*}
\end{lemma}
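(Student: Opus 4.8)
The plan is to exploit three facts established earlier: the outer Helmholtz projection $\mathbb{P}$ is self-adjoint on $L^2(\mathbf{R}_x^d)$ and fixes divergence-free fields, so $\mathbb{P}(\Phi) = \Phi$; by Lemma \ref{StCo:Puep} one has the strong convergence $\mathbb{P}(u^\varepsilon) \to u$ in $C([0,T_0]; H^{m-1}(\mathbf{R}_x^d))$; and since $u^\varepsilon \overset{\ast}{\rightharpoonup} u$ by Corollary \ref{WCon:ru} while $\mathbb{P}(u^\varepsilon) \to u$, the gradient part $\mathbb{Q}(u^\varepsilon) = u^\varepsilon - \mathbb{P}(u^\varepsilon)$ converges weakly-$\ast$ to $0$. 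First I would use self-adjointness to move the outer $\mathbb{P}$ onto $\Phi$ and integrate by parts in $x$, reducing each of the three summands of $\mathcal{R}_\mathbb{P}(u^\varepsilon)$ to a pairing $-\int_0^{T_0}\int_{\mathbf{R}^d} (\,\cdot\otimes\,\cdot\,) : \nabla_x \Phi \, dx\, dt$, with the tensor product uniformly bounded in $L^\infty([0,T_0]; H^m(\mathbf{R}_x^d))$ by Proposition \ref{Hm:BA} and estimate (\ref{LEE:ru}).

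For the two cross terms $\mathbb{P}(u^\varepsilon)\otimes\mathbb{Q}(u^\varepsilon)$ and $\mathbb{Q}(u^\varepsilon)\otimes\mathbb{P}(u^\varepsilon)$ I would argue by the strong-times-weak principle. Writing such a pairing as $\int_0^{T_0}\int_{\mathbf{R}^d} \mathbb{Q}(u^\varepsilon) \cdot G^\varepsilon\,dx\,dt$, where $G^\varepsilon$ is a first-order expression built from $\mathbb{P}(u^\varepsilon)$ and $\nabla_x\Phi$, the Banach-algebra property (Proposition \ref{Hm:BA}) yields $G^\varepsilon \to G$ strongly with $G$ assembled from $u$; splitting $G^\varepsilon = (G^\varepsilon - G) + G$, the first piece vanishes since $\mathbb{Q}(u^\varepsilon)$ is uniformly bounded and $G^\varepsilon - G \to 0$ strongly, while the second vanishes since $\mathbb{Q}(u^\varepsilon) \rightharpoonup 0$ against the fixed $G$. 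These terms are routine.

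The hard part is the purely acoustic term $\mathbb{Q}(u^\varepsilon)\otimes\mathbb{Q}(u^\varepsilon)$, where both factors converge only weakly to zero; here I would follow the compensated-compactness idea of Lions and Masmoudi \cite{LioMas}. Since $\mathbb{Q}(u^\varepsilon)$ is a gradient, one has the pointwise identity $\nabla_x \cdot (\mathbb{Q}(u^\varepsilon)\otimes\mathbb{Q}(u^\varepsilon)) = (\operatorname{div}\mathbb{Q}(u^\varepsilon))\,\mathbb{Q}(u^\varepsilon) + \tfrac12\nabla_x|\mathbb{Q}(u^\varepsilon)|^2$, whose second summand is a gradient annihilated by $\mathbb{P}$. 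Using $\operatorname{div}\mathbb{Q}(u^\varepsilon) = \operatorname{div} u^\varepsilon = -\varepsilon\,\partial_t\rho^\varepsilon$ from the second equation of (\ref{NS:ep}) gains one power of $\varepsilon$, so that, after moving $\mathbb{P}$ onto $\Phi$,
\begin{align*}
\int_0^{T_0}\!\!\int_{\mathbf{R}^d} \mathbb{P}\big(\nabla_x\cdot(\mathbb{Q}(u^\varepsilon)\otimes\mathbb{Q}(u^\varepsilon))\big)\cdot\Phi\,dx\,dt = -\varepsilon\int_0^{T_0}\!\!\int_{\mathbf{R}^d}(\partial_t\rho^\varepsilon)\,\mathbb{Q}(u^\varepsilon)\cdot\Phi\,dx\,dt.
\end{align*}
I would then integrate by parts in time; the boundary contributions are $O(\varepsilon)$, and in the interior term the factor $\varepsilon\,\partial_t\mathbb{Q}(u^\varepsilon)$ is replaced, via the gradient part of the first equation of (\ref{NS:ep}), by $-c_s^2\nabla_x\rho^\varepsilon$ plus an $O(\varepsilon)$ remainder controlled by (\ref{LEE:ru}) and Proposition \ref{Hm:BA}. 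The decisive cancellation is that $\int_{\mathbf{R}^d}\rho^\varepsilon\,\nabla_x\rho^\varepsilon\cdot\Phi\,dx = \tfrac12\int_{\mathbf{R}^d}\nabla_x(\rho^\varepsilon)^2\cdot\Phi\,dx = -\tfrac12\int_{\mathbf{R}^d}(\rho^\varepsilon)^2\operatorname{div}\Phi\,dx = 0$, precisely because $\Phi$ is divergence-free; this is the one place where the hypothesis $\Phi \in C^\infty([0,T_0]; C_{\mathrm{c},\sigma}^\infty(\mathbf{R}_x^d))$ is indispensable. Collecting the three contributions then yields the claimed convergence to zero.
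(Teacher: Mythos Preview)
Your argument is correct. The handling of the two cross terms $\mathbb{P}(u^\varepsilon)\otimes\mathbb{Q}(u^\varepsilon)$ and its transpose is essentially the same as in the paper: split $\mathbb{P}(u^\varepsilon)$ as $(\mathbb{P}(u^\varepsilon)-u)+u$, kill the first piece by Lemma~\ref{StCo:Puep} (strong convergence) combined with the uniform bound on $\mathbb{Q}(u^\varepsilon)$, and kill the second by the weak convergence $\mathbb{Q}(u^\varepsilon)\rightharpoonup 0$ against a fixed test. The only cosmetic difference is that the paper moves $\mathbb{Q}$ onto the fixed factor $u\cdot\nabla_x\Phi$ and then uses $u^\varepsilon\to u$ weakly, whereas you phrase it as $\mathbb{Q}(u^\varepsilon)\rightharpoonup 0$; these are the same computation.

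The genuine difference is in the purely acoustic term $\mathbb{Q}(u^\varepsilon)\otimes\mathbb{Q}(u^\varepsilon)$. The paper does not compute anything here: it sets up the wave system \eqref{WeCon:QQ} for $(\rho^\varepsilon,\mathbb{Q}(u^\varepsilon))$, verifies that the right-hand side tends to zero in $L^1([0,T_0];H^{m-1}(\mathbf{R}_x^d))$, and then invokes the Lions--Masmoudi compensated compactness result \cite{LioMas} (also \cite[Theorem~A.2]{GSR09}) as a black box. Your route is instead a direct, self-contained calculation: exploit that $\mathbb{Q}(u^\varepsilon)$ is a gradient to write $\nabla_x\cdot(\mathbb{Q}(u^\varepsilon)\otimes\mathbb{Q}(u^\varepsilon))=(\operatorname{div}u^\varepsilon)\,\mathbb{Q}(u^\varepsilon)+\tfrac12\nabla_x|\mathbb{Q}(u^\varepsilon)|^2$, drop the pure gradient under $\mathbb{P}$, trade $\operatorname{div}u^\varepsilon$ for $-\varepsilon\partial_t\rho^\varepsilon$, integrate by parts in $t$, and then use the wave equation for $\mathbb{Q}(u^\varepsilon)$ to produce the crucial cancellation $\int\rho^\varepsilon\nabla_x\rho^\varepsilon\cdot\Phi=0$. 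This is precisely the mechanism underlying the cited theorem, so what you have written is in effect a streamlined proof of the special case needed here. Your version is more transparent and avoids an external citation; the paper's version is shorter and makes clear that the phenomenon is general (fast acoustic oscillations do not contribute to the incompressible limit). One small remark: the divergence-free hypothesis on $\Phi$ is used not only in the final cancellation but already at the very first step, when you move the outer $\mathbb{P}$ onto $\Phi$. Also, for the substitution of $\varepsilon\partial_t\mathbb{Q}(u^\varepsilon)$ it is cleaner to use the form \eqref{WeCon:QQ} (equivalently, $\mathbb{Q}$ applied to \eqref{Re:NSu}) rather than the first equation of \eqref{NS:ep}, since the remainder $\mathcal{R}_D^\varepsilon$ in \eqref{NS:ep} contains a $\partial_t g_i^\varepsilon$ term whose control requires going back to the original equation anyway.
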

\begin{proof}
Since the test function $\Phi$ we consider is already divergence free,
\begin{align*}
\int_0^{T_0} \int_{\mathbf{R}^d} \mathbb{P}\Big( \nabla_x \cdot \big( \mathbb{P}(u^\varepsilon) \otimes \mathbb{Q}(u^\varepsilon) \big) \Big) \cdot \Phi \, dx \, dt = \int_0^{T_0} \int_{\mathbf{R}^d} \mathbb{P}(u^\varepsilon) \otimes \mathbb{Q}(u^\varepsilon) : \nabla_x \Phi \,  dx \, dt.
\end{align*}
We further decompose 
\begin{align*}
\int_0^{T_0} \int_{\mathbf{R}^d} \mathbb{P}(u^\varepsilon) \otimes \mathbb{Q}(u^\varepsilon) : \nabla_x \Phi \,  dx \, dt &= \int_0^{T_0} \int_{\mathbf{R}^d} \big( \mathbb{P}(u^\varepsilon) - u \big) \otimes \mathbb{Q}(u^\varepsilon) : \nabla_x \Phi \, dx \, dt \\
&\ \ + \int_0^{T_0} \int_{\mathbf{R}^d} u \otimes \mathbb{Q}(u^\varepsilon) : \nabla_x \Phi \, dx \, dt.
\end{align*}
By Lemma \ref{StCo:Puep}, we deduce that
\begin{align*}
&\left| \int_0^{T_0} \int_{\mathbf{R}^d} \big( \mathbb{P}(u^\varepsilon) - u \big) \otimes \mathbb{Q}(u^\varepsilon) : \nabla_x \Phi \, dx \, dt \right| \\
&\ \ \leq \int_0^{T_0} \big\| \mathbb{P}\big( u^\varepsilon \big) (t) - u(t) \|_{L^\infty(\mathbf{R}_x^d)} \big\| \mathbb{Q}\big( u^\varepsilon \big) (t) \big\|_{L^2(\mathbf{R}_x^d)} \| \nabla_x \Phi \|_{L^2(\mathbf{R}_x^d)} \, dt \\
&\ \ \lesssim T_0 \| \mathbb{P}(u^\varepsilon) - u \|_{L^\infty( [0,T_0]; H^2(\mathbf{R}_x^d) )} \| g_0 \|_{H_x^m L_{v,w}^2} \| \nabla_x \Phi \|_{L^\infty( [0,T_0]; L^2(\mathbf{R}_x^d) )} \to 0
\end{align*}
as $\varepsilon \to 0$.
Since Lemma \ref{Weak:divu0} says that $\mathbb{Q}(u) = 0$ a.e. in $\mathbf{R}^d \times [0,T_0]$, we have that
\begin{align*}
\int_0^{T_0} \int_{\mathbf{R}^d} u \otimes \mathbb{Q}(u^\varepsilon) : \nabla_x \Phi \, dx \, dt &= \int_0^{T_0} \int_{\mathbf{R}^d} u \otimes \mathbb{Q}(u^\varepsilon - u) : \nabla_x \Phi \, dx \, dt \\
&= \int_0^{T_0} \int_{\mathbf{R}^d} (u^\varepsilon - u) \cdot \mathbb{Q}(u \cdot \nabla_x \Phi) \, dx \, dt.
\end{align*}
Noting that $u \cdot \nabla_x \Phi \in L^\infty( [0,T_0]; H^m(\mathbf{R}_x^d) )$ and $H^m(\mathbf{R}_x^d) \hookrightarrow H^{-m}(\mathbf{R}_x^d)$, we can then deduce from the weak convergence of $u^\varepsilon$ to $u$ in $L^\infty( [0,T_0]; H^m(\mathbf{R}_x^d) )$ that
\begin{align*}
\int_0^{T_0} \int_{\mathbf{R}^d} (u^\varepsilon - u) \cdot \mathbb{Q}(u \cdot \nabla_x \Phi) \, dx \, dt \to 0 \quad \text{as} \quad \varepsilon \to 0.
\end{align*}

On the other hand, by using Lemma \ref{Rewr:Trans}, we can further deduce from system (\ref{Inp:LBE1v}) that
\begin{equation} \label{WeCon:QQ}
\left\{
 \begin{aligned}
 &\varepsilon \partial_t \rho^\varepsilon + \operatorname{div} \mathbb{Q}(u^\varepsilon) =& &0,& \\
 &\varepsilon \partial_t \mathbb{Q}(u^\varepsilon) + c_s^2 \nabla_x \rho^\varepsilon =& &- \mathbb{Q}\bigg( \nabla_x \cdot \Big( \sum_{i=0}^n w_i A_i \mathcal{F}_i^\varepsilon \Big) \bigg) - \varepsilon \mathbb{Q}\big( \nabla_x \cdot (u^\varepsilon \otimes u^\varepsilon) \big) - \varepsilon \mathbb{Q}(\mathcal{R}_T^\varepsilon),&
 \end{aligned}
\right.
\end{equation}
where the second equation is obtained by simply applying the projection $\mathbb{Q}$ to equation (\ref{Re:NSu}).
Combining Corollary \ref{WCon:ru} with Proposition \ref{Helm:Hk}, we observe that $\{ \rho^\varepsilon \}_\varepsilon$ and $\{ \mathbb{Q}(u^\varepsilon) \}_\varepsilon$ are both uniformly bounded in $L^\infty( [0,T_0]; H^m(\mathbf{R}_x^d) )$.
By Proposition \ref{Helm:Hk} and the local energy inequality (\ref{loEI:Rg0}), we have that
\begin{align*}
\int_0^{T_0} \bigg\| \mathbb{Q}\bigg( \nabla_x \cdot \Big( \sum_{i=0}^n w_i A_i \mathcal{F}_i^\varepsilon \Big) \bigg) \bigg\|_{H^{m-1}(\mathbf{R}_x^d)} \, dt &\lesssim T_0^{\frac{1}{2}} \left( \int_0^{T_0} \| \mathcal{F}^\varepsilon(t) \|_{H_x^m L_{v,w}^2}^2 \, dt \right)^{\frac{1}{2}} \\
&\lesssim \varepsilon (2 + \nu)^{\frac{1}{2}} \| g_0 \|_{H_x^m L_{v,w}^2}.
\end{align*}
Combining Proposition \ref{Hm:BA}, Proposition \ref{Helm:Hk} and the local energy inequality (\ref{loEI:Rg0}), we can deduce that
\begin{equation} \label{Es:QuGu}
\begin{split}
\int_0^{T_0} \big\| \mathbb{Q}\big( \nabla_x \cdot (u^\varepsilon \otimes u^\varepsilon) \big) \big\|_{H^{m-1}(\mathbf{R}_x^d)} \, dt &\lesssim \int_0^{T_0} \| u^\varepsilon(t) \|_{H^m(\mathbf{R}_x^d)}^2 \, dt \\
&\lesssim T_0 (2+\nu) \| g_0 \|_{H_x^m L_{v,w}^2}^2.
\end{split}
\end{equation}
Moreover, by the Plancherel's identity, we observe that the $L_t^1 H_x^{m-1}$ norm of $\mathbb{Q}\big( \mathcal{R}_T^\varepsilon \big)$ follows estimate (\ref{Es:QuGu}) as well.
Therefore, we show that the right hand side of the second equation in (\ref{WeCon:QQ}) converges to zero in the strong sense in $L^1( [0,T_0]; H^{m-1}(\mathbf{R}_x^d) )$ as $\varepsilon \to 0$.
By a compensated compactness result due to Lions and Masmoudi \cite{LioMas} (see also \cite[Theorem A.2]{GSR09}), which basically says that fast oscillating acoustic waves do not contribute to the macroscopic dynamics in the incompressible limits, we can conclude that
\begin{align*}
\mathbb{P}\Big( \nabla_x \cdot \big( \mathbb{Q}(u^\varepsilon) \otimes \mathbb{Q}(u^\varepsilon) \big) \Big) \to 0
\end{align*}
in the sense of distributions as $\varepsilon \to 0$.
This completes the proof of Lemma \ref{RemHel0}.
\end{proof}
\begin{lemma} \label{HyLim:trans}
For any $\Phi \in C^\infty( [0,T_0]; C_{\mathrm{c},\sigma}^\infty(\mathbf{R}_x^d) )$, it holds that
\begin{align*}
\int_0^{T_0} \int_{\mathbf{R}^d} \mathbb{P}\big( \nabla_x \cdot (u^\varepsilon \otimes u^\varepsilon) \big) \cdot \Phi \, dx \, dt \to \int_0^{T_0} \int_{\mathbf{R}^d} \big( \nabla_x \cdot (u \otimes u) \big) \cdot \Phi \, dx \, dt
\end{align*}
as $\varepsilon \to 0$.
\end{lemma}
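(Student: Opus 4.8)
The plan is to reduce the claim to a quadratic limit involving only the solenoidal part $\mathbb{P}(u^\varepsilon)$, for which the strong convergence established in Lemma~\ref{StCo:Puep} is available. First I would invoke the decomposition
\[
\mathbb{P}\big(\nabla_x\cdot(u^\varepsilon\otimes u^\varepsilon)\big) = \mathbb{P}\Big(\nabla_x\cdot\big(\mathbb{P}(u^\varepsilon)\otimes\mathbb{P}(u^\varepsilon)\big)\Big) + \mathcal{R}_\mathbb{P}(u^\varepsilon)
\]
recorded just before the statement. Testing against $\Phi\in C^\infty([0,T_0];C_{\mathrm{c},\sigma}^\infty(\mathbf{R}_x^d))$, Lemma~\ref{RemHel0} immediately kills the remainder contribution, so it only remains to analyze the first term. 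Since $\Phi$ is divergence free we have $\mathbb{P}(\Phi)=\Phi$, and because $\mathbb{P}$ is an orthogonal (hence self-adjoint) projection on $L^2(\mathbf{R}_x^d)$, I would move the projection off the nonlinearity and integrate by parts in $x$ to obtain
\[
\int_0^{T_0}\!\!\int_{\mathbf{R}^d} \mathbb{P}\Big(\nabla_x\cdot\big(\mathbb{P}(u^\varepsilon)\otimes\mathbb{P}(u^\varepsilon)\big)\Big)\cdot\Phi\,dx\,dt = -\int_0^{T_0}\!\!\int_{\mathbf{R}^d} \mathbb{P}(u^\varepsilon)\otimes\mathbb{P}(u^\varepsilon):\nabla_x\Phi\,dx\,dt.
\]

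The core step is then to pass to the limit in this quadratic expression. I would write the difference
\[
\mathbb{P}(u^\varepsilon)\otimes\mathbb{P}(u^\varepsilon)-u\otimes u = \big(\mathbb{P}(u^\varepsilon)-u\big)\otimes\mathbb{P}(u^\varepsilon) + u\otimes\big(\mathbb{P}(u^\varepsilon)-u\big),
\]
and estimate each term by Hölder's inequality, placing the factor $\mathbb{P}(u^\varepsilon)-u$ in $L^2_x$, the remaining bounded factor ($\mathbb{P}(u^\varepsilon)$ or $u$) in $L^\infty_x$ via the Sobolev embedding $H^{m-1}(\mathbf{R}_x^d)\hookrightarrow L^\infty(\mathbf{R}_x^d)$ (valid since $m-1\geq d>d/2$), and $\nabla_x\Phi$ in $L^2_x$. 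Using the uniform bounds coming from Corollary~\ref{WCon:ru} together with Proposition~\ref{Helm:Hk} for $\mathbb{P}(u^\varepsilon)$ and from Remark~\ref{Bdd:rhou} for $u$, together with the strong convergence $\|\mathbb{P}(u^\varepsilon)-u\|_{L^\infty([0,T_0];H^{m-1}(\mathbf{R}_x^d))}\to 0$ of Lemma~\ref{StCo:Puep}, the time integral of this difference tends to zero. Hence the right-hand side above converges to $-\int_0^{T_0}\int_{\mathbf{R}^d} u\otimes u:\nabla_x\Phi\,dx\,dt$, and integrating by parts once more (again using $\mathbb{P}(\Phi)=\Phi$) yields exactly $\int_0^{T_0}\int_{\mathbf{R}^d}\big(\nabla_x\cdot(u\otimes u)\big)\cdot\Phi\,dx\,dt$, which is the desired limit.

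The main obstacle is precisely the passage to the limit in the nonlinear term: weak-$\ast$ convergence of $u^\varepsilon$ alone cannot control a product, so everything hinges on upgrading to the strong convergence of $\mathbb{P}(u^\varepsilon)$ in $C([0,T_0];H^{m-1}(\mathbf{R}_x^d))$ obtained through the Arzel\`a--Ascoli argument of Lemma~\ref{StCo:Puep}. A secondary, bookkeeping point is that one must work with the solenoidal parts throughout: the compensated-compactness input behind Lemma~\ref{RemHel0} is what guarantees that the oscillatory gradient part $\mathbb{Q}(u^\varepsilon)$, which need not converge strongly, does not contaminate the macroscopic limit. Once these two facts are in place, the remaining manipulations are the routine integration-by-parts and H\"older estimates sketched above.
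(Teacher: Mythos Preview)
Your proposal is correct and follows essentially the same approach as the paper: decompose into the $\mathbb{P}(u^\varepsilon)\otimes\mathbb{P}(u^\varepsilon)$ part plus $\mathcal{R}_\mathbb{P}(u^\varepsilon)$, dispose of the latter via Lemma~\ref{RemHel0}, and pass to the limit in the former using the strong convergence of Lemma~\ref{StCo:Puep} together with the uniform bounds from Corollary~\ref{WCon:ru}, Proposition~\ref{Helm:Hk} and Remark~\ref{Bdd:rhou}. The only cosmetic difference is in the H\"older distribution: the paper places $\nabla_x\Phi$ in $L^\infty_x$ and the bounded velocity factor in $L^2_x$, whereas you do the reverse, but either choice works.
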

\begin{proof}
We decompose 
\begin{align*}
&\int_0^{T_0} \int_{\mathbf{R}^d} \Big( \nabla_x \cdot \big( \mathbb{P}(u^\varepsilon) \otimes \mathbb{P}(u^\varepsilon) \big) \Big) \cdot \Phi \, dx \, dt \\
&= \int_0^{T_0} \int_{\mathbf{R}^d} \big( \mathbb{P}(u^\varepsilon) - u \big) \cdot \nabla_x \Phi \cdot \mathbb{P}(u^\varepsilon) \, dx \, dt + \int_0^{T_0} \int_{\mathbf{R}^d} u \cdot \nabla_x \Phi \cdot \big( \mathbb{P}(u^\varepsilon) - u \big) \, dx \, dt \\
&\ \ + \int_0^{T_0} \int_{\mathbf{R}^d} \big( \nabla_x \cdot (u \otimes u) \big) \cdot \Phi \, dx \, dt.
\end{align*}
Combining Lemma \ref{StCo:Puep} with Proposition \ref{Helm:Hk}, Corollary \ref{WCon:ru} and Remark \ref{Bdd:rhou}, we deduce that
\begin{align*}
&\left| \int_0^{T_0} \int_{\mathbf{R}^d} \big( \mathbb{P}(u^\varepsilon) - u \big) \cdot \nabla_x \Phi \cdot \mathbb{P}(u^\varepsilon) \, dx \, dt \right| + \left| \int_0^{T_0} \int_{\mathbf{R}^d} u \cdot \nabla_x \Phi \cdot \big( \mathbb{P}(u^\varepsilon) - u \big) \, dx \, dt \right| \\
&\ \ \lesssim T_0 (2+\nu)^{\frac{1}{2}} \| \mathbb{P}(u^\varepsilon) - u \|_{L^\infty( [0,T_0]; L^2(\mathbf{R}_x^d) )} \| g_0 \|_{L^\infty( [0,T_0]; H_x^m L_{v,w}^2 )} \| \nabla_x \Phi \|_{L^\infty( [0,T_0]; L^\infty(\mathbf{R}_x^d) )} \\
&\ \ \to 0 \quad \text{as} \quad \varepsilon \to 0.
\end{align*}
The convergence of the remainder $\mathcal{R}_\mathbb{P}(u^\varepsilon)$ to zero in the sense of distributions is guaranteed by Lemma \ref{RemHel0}.
We thus obtain Lemma \ref{HyLim:trans}.
\end{proof}

Summarizing all the convergence results that we have derived in this paper, we have proved that 
\begin{align*}
&\int_0^{T_0} \int_{\mathbf{R}^d} \Big\{ \partial_t \mathbb{P}(u^\varepsilon) - c_s^2 \nu \Delta_x \mathbb{P}(u^\varepsilon) + \mathbb{P}\big( \nabla_x \cdot (u^\varepsilon \otimes u^\varepsilon) \big) \Big\} \cdot \Phi \, dx \, dt \\
&\ \ \to \int_{\mathbf{R}^d} u_0 \cdot \Phi(x,0) \, dx - \left\{ \int_0^{T_0} \int_{\mathbf{R}^d} u \cdot \partial_t \Phi + (u \otimes u) : \nabla_x \Phi - c_s^2 \nu u \cdot \Delta_x \Phi \, dx \, dt \right\}
\end{align*}
as $\varepsilon \to 0$ for any $\Phi(x,t) \in C_\mathrm{c}^\infty( [0,T_0); C_{\mathrm{c}, \sigma}^\infty(\mathbf{R}_x^d) )$, i.e., 
\begin{align*}
u \in C( [0,T_0]; H^{m-1}(\mathbf{R}_x^d) ) \cap L^\infty( [0,T_0]; H^m(\mathbf{R}_x^d) )
\end{align*}
is a local weak solution to the incompressible Navier-Stokes equations
\begin{equation*}
\left\{
 \begin{aligned}
 \partial_t u - c_s^2 \nu \Delta_x u + \nabla_x \cdot (u \otimes u) + \nabla_x p &= 0,& \\
 \nabla_x \cdot u &= 0&
 \end{aligned}
\right.
\end{equation*}
with initial data $u(x,0) = \mathbb{P}(u_0)$.

\section{Characterization for isotropic lattices}
\label{Sec:IsoLat}

The purpose of this section is to characterize the $2$D and
$3$D isotropic lattices associated with the speed of sound $c_s = 3^{-\frac{1}{2}}$ under a restriction condition on the particle velocity set $\mathcal{V}$.

\subsection{Relation to the cubature formula}
\label{Sub:Cuba}

Given a lattice $(\mathcal{V}, w)$, by considering $\mathbf{n}_0 := \{0, 1, 2, ..., n\}$ as the state space, we can define a corresponding discrete and finite measure space $(\mathbf{n}_0, 2^{\mathbf{n}_0}, P)$ and measurable functions $X_\alpha$ ($\alpha \in \{1, 2, ..., d\}$) by setting 
\begin{align*}
P(\{ i \}) := w_i \quad \text{and} \quad X_\alpha(i) := v_{i, \alpha} \quad (\forall \; i \in \mathbf{n}_0).
\end{align*}
On the other hand, suppose that we are given a measure space $(\Omega, 2^\Omega, P)$ with $\Omega$ being a finite set and the probability measure $P$ having full support, i.e, this means that $P(\{ y_i \}) > 0$ for each $y_i \in \Omega$, and   measurable functions $X_\alpha$ ($\alpha \in \{1, 2, ..., d\}$) on this measure space.
Then, we can construct a lattice by setting 
\begin{align*}
v_{i \alpha} := X_\alpha(y_i) \quad \text{and} \quad w_i := P(\{ y_i \}) \quad (\forall \; y_i \in \Omega)
\end{align*}
and $v_i := (v_{i 1}, v_{i, 2}, ..., v_{i d})$.
Hence, there is a one-to-one mapping between lattices and finite measure spaces with full support combined with $d$ measurable functions on these measure spaces. 
Let $c_s > 0$ and $Y_\alpha := c_s X_\alpha$ for any $\alpha$.
Then, the definition of an isotropic lattice associated with the speed of sound $c_s$ can be rephrased as the following.

\begin{definition} \label{Def:isLa2}
A lattice $(\mathcal{V}, w)$ is isotropic if the following conditions hold for its corresponding measure space $(\Omega, 2^\Omega, P)$ and the measurable functions $Y_\alpha$ ($\alpha \in \{1, 2, ..., d\}$):
\begin{enumerate} 
\item $(\Omega, 2^\Omega, P)$ is a probability space. 
\item $E[Y_\alpha] = 0$ for any $1 \leq \alpha \leq d$. 
\item $E[Y_\alpha Y_\beta] = \delta_{\alpha \beta}$ for any $1 \leq \alpha, \beta \leq d$.
\item $E[Y_\alpha Y_\beta Y_\gamma] = 0$ for any $1 \leq \alpha, \beta, \gamma \leq d$.
\item $E[Y_\alpha Y_\beta Y_\gamma Y_\zeta] = \delta_{\alpha \beta} \delta_{\gamma \zeta} + \delta_{\alpha \gamma} \delta_{\beta \zeta} + \delta_{\alpha \zeta} \delta_{\beta \gamma}$ for any $1 \leq \alpha, \beta, \gamma, \zeta \leq d$.
\end{enumerate}
Here $E[ \cdot ]$ represents the expectation of $``\cdot"$, i.e., the integral of $``\cdot"$ with respect to measure $P$.
\end{definition}

Let $Z_\alpha$ ($\alpha \in \{1, 2, ..., d\}$) be independent Gaussian
random variables with $E[Z_\alpha] = 0$ and $E[Z_\alpha^2] = 1$ for $\alpha \in \{1, 2, ..., d\}$.
Then, conditions for $\mathbb{Y} = (Y_1, Y_2, ..., Y_d)$ in Definition \ref{Def:isLa2} are summarized as 
\begin{align*}
E[f(\mathbb{Y})] = E[f(\mathbb{Z})]
\end{align*}
for any polynomial $f$ of $d$ variables with degree less than or equal to $4$ where $\mathbb{Z} = (Z_1, Z_2, ..., Z_d)$.
To find such $\mathbb{Y} = (Y_1, Y_2, ..., Y_d)$ taking a finite number of values is a well-studied problem in the context of cubature formulas, see e.g., \cite{OSS}. 
For example, 
\begin{itemize}
\item For $d=2$, there is a solution with $n=7$, i.e., the D$2$Q$7$ scheme exists,
\item For $d=3$, there is a solution with $n=13$, i.e., the D$3$Q$13$ scheme exists,
\item For $d=4$, there is a solution with $n=22$,
\item For $d=5$, there is a solution with $n=33$,
\item For $d=6$, there is a solution with $n=44$,
\item For $d=7$, there is a solution with $n=57$,
\item For $d \geq 8$, there is a solution with $n = d^2 + 3d + 3$.
\end{itemize}

\subsection{$2$D and $3$D isotropic lattices}
\label{Sub:2d3diso}

In this section, we always consider $c_s = 3^{-\frac{1}{2}}$. Without
this causing of any ambiguity, when we refer to isotropic
lattices, we omit the words ``associated with the speed of sound $3^{-\frac{1}{2}}$''.

\begin{proposition} \label{Prob:Xa}
Suppose that the lattice $(\mathcal{V}, w)$ is isotropic and the support of $X_\alpha$ is in $[-1,1]$ for any $\alpha \in \{1, 2, ..., d\}$.
Then, 
\begin{align*}
P(X_\alpha = 0) = \frac{2}{3}, \quad P(X_\alpha = 1) = P(X_\alpha = -1) = \frac{1}{6}
\end{align*}
for any $\alpha \in \{1, 2, ..., d\}$.
\end{proposition}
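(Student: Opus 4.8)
The plan is to reduce isotropy to three scalar moment identities for each fixed component $X_\alpha$ and then exploit the support hypothesis. First I would specialize the summation conditions (\ref{Sum:wv}) to a single index. The second condition gives $E[X_\alpha] = 0$; putting $\alpha = \beta$ in the third condition gives $E[X_\alpha^2] = c_s^2 = \tfrac{1}{3}$; and putting $\alpha = \beta = \gamma = \zeta$ in the fifth condition gives $E[X_\alpha^4] = 3 c_s^4 = \tfrac{1}{3}$, since $c_s^2 = \tfrac{1}{3}$ and $c_s^4 = \tfrac{1}{9}$. Thus
\begin{align*}
E[X_\alpha] = 0, \qquad E[X_\alpha^2] = \tfrac{1}{3}, \qquad E[X_\alpha^4] = \tfrac{1}{3},
\end{align*}
and the decisive feature is that the second and fourth moments coincide.

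The key step is to combine $E[X_\alpha^2] = E[X_\alpha^4]$ with the assumption that $X_\alpha$ is supported in $[-1,1]$. Since $|X_\alpha| \leq 1$ almost surely, the random variable $X_\alpha^2 (1 - X_\alpha^2) = X_\alpha^2 - X_\alpha^4$ is nonnegative almost surely, while its expectation equals $E[X_\alpha^2] - E[X_\alpha^4] = 0$. A nonnegative random variable with vanishing expectation must vanish almost surely, so $X_\alpha^2 (1 - X_\alpha^2) = 0$ a.s., which forces $X_\alpha \in \{-1, 0, 1\}$ with probability one. I expect this to be the crux of the argument: once one notices the equality of the two even moments, the elementary pointwise inequality $x^4 \leq x^2$ on $[-1,1]$ does all the work and collapses the support to the three-point set $\{-1,0,1\}$, recovering the reduction announced in the introduction.

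It then remains to solve a small linear system for the atomic masses. Writing $p_1 := P(X_\alpha = 1)$, $p_{-1} := P(X_\alpha = -1)$ and $p_0 := P(X_\alpha = 0)$, the normalization $p_1 + p_{-1} + p_0 = 1$, together with $E[X_\alpha] = p_1 - p_{-1} = 0$ and $E[X_\alpha^2] = p_1 + p_{-1} = \tfrac{1}{3}$, yields immediately $p_1 = p_{-1} = \tfrac{1}{6}$ and $p_0 = \tfrac{2}{3}$. The fourth-moment identity $E[X_\alpha^4] = p_1 + p_{-1} = \tfrac{1}{3}$ is then automatically satisfied, which serves as a consistency check. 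As $\alpha \in \{1, 2, \dots, d\}$ was arbitrary, this establishes the claimed distribution for every component and completes the proof.
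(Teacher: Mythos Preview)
Your proof is correct and follows essentially the same approach as the paper: both arguments use the equality $E[X_\alpha^2] = E[X_\alpha^4] = \tfrac{1}{3}$ together with the pointwise inequality $X_\alpha^4 \leq X_\alpha^2$ on $[-1,1]$ to force $X_\alpha \in \{-1,0,1\}$ almost surely, and then solve for the point masses using the first two moments. Your write-up is slightly more explicit in deriving the moment identities from (\ref{Sum:wv}) and in phrasing the key step via the nonnegative variable $X_\alpha^2(1 - X_\alpha^2)$, but the substance is the same.
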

\begin{proof}
Fix $\alpha \in \{1, 2, ..., d\}$.
Since the support of $X_\alpha$ is in $[-1,1]$, it certainly holds that $X_\alpha^2 \geq X_\alpha^4$, i.e., $P(X_\alpha^2 \geq X_\alpha^4) = 1$.
Since the algebraic conditions for an isotropic lattice say that $E[X_\alpha^2] = E[X_\alpha^4] = \frac{1}{3}$, we must have $P(X_\alpha^2 > X_\alpha^4) = 0$, i.e., $P(X_\alpha^2 = X_\alpha^4) = 1$.
That means that $P(X_\alpha \in \{-1, 0, 1\}) = 1$.
Then, $E[X_\alpha^2] = \frac{1}{3}$ implies that $P(X_\alpha = 0) = \frac{2}{3}$ and $E[X_\alpha] = 0$ implies that $P(X_\alpha = 1) = P(X_\alpha = -1) = \frac{1}{6}$.
\end{proof}
\begin{proposition} \label{Prob:XaXb}
Suppose that the lattice $(\mathcal{V}, w)$ is isotropic and the support of $X_\alpha$ is in $[-1,1]$ for any $\alpha \in \{1, 2, ..., d\}$.
Then, for $\alpha, \beta \in \{1, 2, ..., d\}$ satisfying $\alpha \neq \beta$, it holds that
\begin{itemize}
\item $P(X_\alpha = X_\beta = 0) = \frac{4}{9}$,
\item $P(X_\alpha = \sigma_\alpha, X_\beta = 0) = \frac{1}{9}$ for $\sigma_\alpha \in \{-1,1\}$,
\item $P(X_\alpha = \sigma_\alpha, X_\beta = \sigma_\beta) = \frac{1}{36}$ for $\sigma_\alpha, \sigma_\beta \in \{-1,1\}$.
\end{itemize}
\end{proposition}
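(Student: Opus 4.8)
The plan is to exploit the fact that, once the support of $(X_\alpha, X_\beta)$ is known to be finite, the joint law is pinned down by a handful of mixed moments, every one of which is already supplied by the isotropy conditions. Everything then reduces to a small, nonsingular linear system.

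First I would invoke Proposition \ref{Prob:Xa}: since the support of each $X_\eta$ lies in $[-1,1]$, isotropy forces $P(X_\eta \in \{-1,0,1\}) = 1$. Hence, for $\alpha \neq \beta$, the pair $(X_\alpha, X_\beta)$ is supported on the nine-point grid $\{-1,0,1\}^2$. The key observation is that on $\{-1,0,1\}$ the three functions $1, x, x^2$ are linearly independent and span all functions, so on the grid the nine monomials $X_\alpha^p X_\beta^q$ with $0 \le p,q \le 2$ form a basis of the space of all real functions; consequently the joint distribution is \emph{uniquely} determined by the nine numbers $E[X_\alpha^p X_\beta^q]$, $0 \le p,q \le 2$. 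This is the one conceptual step, and it reduces the whole statement to computing these moments and solving a linear system.

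Next I would read off the needed moments from the isotropic conditions of Definition \ref{Iso:Lattice}, rewritten in expectation form as in Definition \ref{Def:isLa2}. Using $c_s^2 = \tfrac13$ and $c_s^4 = \tfrac19$, the second-order condition gives $E[X_\alpha] = E[X_\beta] = 0$, $E[X_\alpha^2] = E[X_\beta^2] = \tfrac13$ and $E[X_\alpha X_\beta] = 0$; the third-order condition gives $E[X_\alpha^2 X_\beta] = E[X_\alpha X_\beta^2] = 0$; and the fourth-order condition, evaluated on the index pattern $(\alpha,\alpha,\beta,\beta)$, gives $E[X_\alpha^2 X_\beta^2] = c_s^4 = \tfrac19$. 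These are precisely the nine moments required.

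Finally I would solve for $p_{st} := P(X_\alpha = s, X_\beta = t)$. I would first determine the four corner probabilities $p_{\pm1,\pm1}$. Because $X_\alpha^2, X_\beta^2 \in \{0,1\}$, the identity $E[X_\alpha^2 X_\beta^2] = \tfrac19$ is exactly $p_{1,1}+p_{1,-1}+p_{-1,1}+p_{-1,-1} = \tfrac19$, while the three vanishing moments $E[X_\alpha X_\beta] = E[X_\alpha^2 X_\beta] = E[X_\alpha X_\beta^2] = 0$ translate into $p_{1,1}-p_{1,-1}-p_{-1,1}+p_{-1,-1}=0$, $(p_{1,1}+p_{-1,1})-(p_{1,-1}+p_{-1,-1})=0$ and $(p_{1,1}+p_{1,-1})-(p_{-1,1}+p_{-1,-1})=0$. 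This $4\times4$ system is nonsingular and has the unique solution $p_{\pm1,\pm1} = \tfrac1{36}$, giving the third bullet. The edge and center probabilities then follow from the marginals in Proposition \ref{Prob:Xa}: $P(X_\alpha = 1) = \tfrac16$ yields $p_{1,0} = \tfrac16 - p_{1,1} - p_{1,-1} = \tfrac19$, and by the same argument $p_{-1,0} = p_{0,1} = p_{0,-1} = \tfrac19$ (the second bullet), after which $P(X_\alpha = 0) = \tfrac23$ forces $p_{0,0} = \tfrac23 - p_{0,1} - p_{0,-1} = \tfrac49$ (the first bullet). The only real obstacle is the first step; once one recognizes that the degree-$\le 2$-per-variable moments already determine a law on $\{-1,0,1\}^2$, so that nothing beyond the order-four isotropy conditions is needed, the remainder is elementary linear algebra.
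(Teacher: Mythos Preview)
Your proof is correct and is in the same spirit as the paper's, but it is organized more efficiently. The paper uses only three of the four ``corner'' moment equations---those coming from $E[X_\alpha X_\beta]$, $E[X_\alpha^2 X_\beta]$, and $E[X_\alpha^2 X_\beta^2]$---which leaves the corner probabilities underdetermined at first; it then works its way back through the marginals, an auxiliary computation with $E[X_\alpha \pm X_\beta]$, and finally a symmetry argument (swapping the roles of $\alpha$ and $\beta$) to pin down the remaining freedom. By contrast, you bring in the fourth moment $E[X_\alpha X_\beta^2]=0$ immediately, which makes the $4\times4$ corner system nonsingular and yields $p_{\pm1,\pm1}=\tfrac{1}{36}$ in one stroke; the edges and center then drop out from Proposition~\ref{Prob:Xa} alone. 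Your opening observation that the nine monomials $X_\alpha^pX_\beta^q$, $0\le p,q\le2$, span all functions on $\{-1,0,1\}^2$ also explains \emph{a priori} why the available isotropy data suffice, something the paper's argument establishes only implicitly by arriving at a unique answer.
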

\begin{proof}
Let $\alpha, \beta \in \{1, 2, ..., d\}$ with $\alpha \neq \beta$.
For any $k, j \in \{-1, 0, 1\}$, for simplicity of notations we denote $P(X_\alpha = k, X_\beta = j)$ by $p_{k,j}$.
Note that $E[X_\alpha X_\beta] = 0$ implies that
\begin{align} \label{E:XaXb}
p_{1,1} + p_{-1,-1} - p_{1,-1} - p_{-1,1} = 0,
\end{align}
$E[X_\alpha^2 X_\beta] = 0$ implies that
\begin{align} \label{E:Xa2Xb}
p_{1,1} - p_{-1,-1} - p_{1,-1} + p_{-1,1} = 0
\end{align}
and $E[X_\alpha^2 X_\beta^2] = 0$ implies that
\begin{align} \label{E:Xa2Xb2}
p_{1,1} + p_{-1,-1} + p_{1,-1} + p_{-1,1} = \frac{1}{9}.
\end{align}
Adding (\ref{E:XaXb}) and (\ref{E:Xa2Xb}) together, we obtain that $p_{1,1} = p_{1,-1}$ and $p_{-1,-1} = p_{-1,1}$. 
Adding (\ref{E:XaXb}) and (\ref{E:Xa2Xb2}) together, we obtain that
\begin{align} \label{p11pmm}
p_{1,1} + p_{-1,-1} = \frac{1}{18} = p_{1,-1} + p_{-1,1}.
\end{align}
Moreover, we have that
\begin{equation} \label{Decom:Pa}
P(X_\alpha = 1) = \frac{1}{6} = p_{1,1} + p_{1,0} + p_{1,-1}, \quad P(X_\alpha = -1) = \frac{1}{6} = p_{-1,1} + p_{-1,0} + p_{-1,-1}.
\end{equation}
By adding the two equations in (\ref{Decom:Pa}) together and making use of (\ref{p11pmm}), we deduce that
\begin{align} \label{p10pm0}
p_{1,0} + p_{-1,0} = \frac{2}{9}.
\end{align}
By Proposition \ref{Prob:Xa}, we have that $\frac{2}{3} = P(X_\beta = 0) = p_{1,0} + p_{0,0} + p_{-1,0}$. Hence, equality (\ref{p10pm0}) implies that $p_{0,0} = \frac{4}{9}$.
In addition, by the linearity of expectation, we can deduce from $E[X_\alpha + X_\beta] = 0$ that
\begin{equation} \label{XapXb}
 2p_{1,1} + p_{1,0} + p_{0,1} = p_{0,-1} + p_{-1,0} + 2p_{-1,-1}
\end{equation}
and from $E[X_\alpha - X_\beta] = 0$ that
\begin{equation} \label{XamXb}
p_{1,0} + 2p_{1,-1} + p_{0,-1} = p_{0,1} + 2 p_{-1,1} + p_{-1,0}.
\end{equation}
Subtracting (\ref{XamXb}) from (\ref{XapXb}) and noting that $p_{1,1} = p_{1,-1}$ and $p_{-1,-1} = p_{-1,1}$, we deduce that $p_{0,1} = p_{0,-1}$. 
Using Proposition \ref{Prob:Xa} again, we have that $\frac{2}{3} = P(X_\alpha = 0) = p_{0,1} + p_{0,0} + p_{0,-1}$, i.e., it holds that 
\begin{align} \label{Val:p01}
p_{0,1} = p_{0,-1} = \frac{1}{9}.
\end{align}
Since (\ref{Val:p01}) holds for any $\alpha, \beta \in \{1, 2, ..., d\}$ satisfying $\alpha \neq \beta$, by interchanging values of $\alpha$ and $\beta$, we deduce that $p_{1,0} = p_{-1,0} = \frac{1}{9}$.
Finally, by substituting values of $p_{1,0}$ and $p_{-1,0}$ back into (\ref{Decom:Pa}), we obtain that 
\begin{align*}
p_{1,1} = p_{1,-1} = \frac{1}{36} = p_{-1,-1} = p_{-1,1}.
\end{align*}
This completes the proof of Proposition \ref{Prob:XaXb}.
\end{proof}
\begin{corollary} \label{Iso:2D}
Let $d=2$ and $(\mathcal{V}, w)$ be a lattice.
Suppose that the support of $X_\alpha$ is in $[-1,1]$ for any $\alpha = 1,2$.
Then, the lattice $(\mathcal{V}, w)$ is isotropic if and only if the scheme is D$2$Q$9$.
\end{corollary}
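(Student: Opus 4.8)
The plan is to prove the two implications separately, leaning on the probabilistic reformulation of isotropy in Definition \ref{Def:isLa2} together with Propositions \ref{Prob:Xa} and \ref{Prob:XaXb}. The implication that the D$2$Q$9$ scheme is isotropic is the easy half: I would verify the five summation conditions in (\ref{Sum:wv}) directly from the explicit weights (\ref{D2Q9w}) and velocities (\ref{D2Q9v}), taking $c_s^2 = \frac13$. The even-order conditions reduce to short finite sums --- for instance $\sum_i w_i v_{i,1}^2 = 2\cdot\frac19 + 4\cdot\frac1{36} = \frac13 = c_s^2$ and $\sum_i w_i v_{i,1}^2 v_{i,2}^2 = 4\cdot\frac1{36} = \frac19 = c_s^4$ --- while the odd-order conditions vanish because the nine-point configuration is symmetric under $v \mapsto -v$ with weights respecting this symmetry. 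This is routine and I would not belabor it.

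For the converse, suppose $(\mathcal{V}, w)$ is isotropic with the support of each $X_\alpha$ contained in $[-1,1]$. By Proposition \ref{Prob:Xa}, each $X_\alpha$ takes values only in $\{-1,0,1\}$, so every particle velocity $v_i = (X_1(i), X_2(i))$ lies in the nine-point grid $\{-1,0,1\}^2$. Next I would invoke Proposition \ref{Prob:XaXb} with $(\alpha,\beta) = (1,2)$, and (interchanging the roles of the two indices) also with $(\alpha,\beta) = (2,1)$, to pin down the full joint law of $(X_1, X_2)$: mass $\frac49$ at $(0,0)$, mass $\frac19$ at each of $(\pm1,0)$ and $(0,\pm1)$, and mass $\frac1{36}$ at each of $(\pm1,\pm1)$. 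In particular each of the nine grid points receives strictly positive probability, and these masses sum to $\frac49 + 4\cdot\frac19 + 4\cdot\frac1{36} = 1$, so no mass is left over for any other point.

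The final step is to push this joint law back through the one-to-one correspondence between lattices and finite probability spaces of full support described in Section \ref{Sub:Cuba}. Since a lattice has, by definition, pairwise distinct velocities and strictly positive weights, the pushforward measure $\sum_i w_i \delta_{v_i}$ has exactly $n+1$ distinct atoms, namely the velocities themselves, each carrying mass $w_i$. Because all nine grid points carry positive mass, the velocity set must therefore be exactly $\{-1,0,1\}^2$, and the weight sitting at each velocity must equal the mass computed above; comparing with (\ref{D2Q9w}) and (\ref{D2Q9v}) identifies the lattice as the D$2$Q$9$ scheme. I expect the only place that genuinely demands care is this identification: one must use the distinctness of velocities to argue that a positive joint probability at a grid point is realized by a single velocity located there, so that Propositions \ref{Prob:Xa} and \ref{Prob:XaXb} determine not merely the weights but the entire velocity set, with no grid point either missing or duplicated.
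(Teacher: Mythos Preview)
Your proposal is correct and follows essentially the same approach as the paper: the paper's proof simply cites Propositions \ref{Prob:Xa} and \ref{Prob:XaXb} for the direction ``isotropic $\Rightarrow$ D$2$Q$9$'' and leaves the reverse direction to direct calculation, exactly as you outline. Your version is in fact more explicit than the paper's, particularly in spelling out why the joint law of $(X_1,X_2)$ determines the velocity set and weights uniquely via the distinctness and positivity requirements in the definition of a lattice.
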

\begin{proof}
Sufficiency is an outcome of Proposition \ref{Prob:Xa} and Proposition \ref{Prob:XaXb}.
Necessity is a well-known fact which can be easily checked by direct calculations, see e.g. \cite{LBM6}.
\end{proof}
\begin{proposition} \label{Pb:XaXbXg}
Suppose that the lattice $(\mathcal{V},w)$ is isotropic and the support of $X_\alpha$ is in $[-1,1]$ for any $\alpha \in \{1, ..., d\}$.
Suppose that $\alpha, \beta, \gamma \in \{1, ..., d\}$ satisfying $\alpha \neq \beta$, $\alpha \neq \gamma$ and $\beta \neq \gamma$. 
Then, there exist constants $c \in [0, \frac{1}{72}]$ and $c_\alpha$, $c_\beta$, $c_\gamma \in [- \frac{1}{72}, \frac{1}{72}]$ such that 
\begin{itemize}
\item $P(X_\alpha = X_\beta = X_\gamma = 0) = \frac{1}{3} - 8c$,
\item $P(X_\alpha = \sigma_\alpha, X_\beta = X_\gamma = 0) =
  \frac{1}{18} + 4c + 4 \sigma_\alpha c_\alpha$ \quad for \quad $\sigma_\alpha \in \{-1, 1\}$,
\item $P(X_\alpha = \sigma_\alpha, X_\beta = \sigma_\beta, X_\gamma = 0) = \frac{1}{36} - 2c - 2 \sigma_\alpha c_\alpha - 2 \sigma_\beta c_\beta$ \quad for \quad $\sigma_\alpha, \sigma_\beta \in \{-1, 1\}$,
\item $P(X_\alpha = \sigma_\alpha, X_\beta = \sigma_\beta, X_\gamma = \sigma_\gamma) = c + \sigma_\alpha c_\alpha + \sigma_\beta c_\beta + \sigma_\gamma c_\gamma$ \quad for \quad $\sigma_\alpha, \sigma_\beta, \sigma_\gamma \in \{-1, 1\}$.
\end{itemize}
Moreover, the constants $c, c_\alpha, c_\beta, c_\gamma$ satisfy the inequalities
\begin{equation*}
\left\{
 \begin{aligned}
 |c_\alpha| + |c_\beta| + |c_\gamma| &\leq& &c,& & & \\
 c + |c_\zeta| + |c_\eta| &\leq& &\frac{1}{72}& &\text{for any} \quad \zeta, \eta \in \{\alpha, \beta, \gamma\} \quad \text{with} \quad \zeta \neq \eta.&
 \end{aligned}
\right.
\end{equation*}
\end{proposition}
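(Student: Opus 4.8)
The plan is to determine the full joint law of the triple $(X_\alpha, X_\beta, X_\gamma)$ on the $27$-point set $\{-1,0,1\}^3$, using the previously established single and pairwise laws together with the third- and fourth-moment isotropy conditions, and then to read off the advertised inequalities from nonnegativity of probabilities.

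First I would record that, by Proposition \ref{Prob:Xa}, each $X_\zeta$ (for $\zeta \in \{\alpha,\beta,\gamma\}$) is supported on $\{-1,0,1\}$, so the triple is supported on $\{-1,0,1\}^3$; write $p_{ijk} := P(X_\alpha = i, X_\beta = j, X_\gamma = k)$. The crux is the ``all-nonzero octant'' $\{-1,1\}^3$: viewed as a function of the signs $(\sigma_\alpha,\sigma_\beta,\sigma_\gamma)$, the quantity $p_{\sigma_\alpha\sigma_\beta\sigma_\gamma}$ expands in the eight characters $1,\sigma_\alpha,\sigma_\beta,\sigma_\gamma,\sigma_\alpha\sigma_\beta,\sigma_\alpha\sigma_\gamma,\sigma_\beta\sigma_\gamma,\sigma_\alpha\sigma_\beta\sigma_\gamma$. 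I would \emph{define} $c,c_\alpha,c_\beta,c_\gamma$ to be the coefficients of the first four characters and then show the remaining four vanish. Using that $X_\zeta^2$ is the indicator of $\{X_\zeta\neq 0\}$ (since $X_\zeta\in\{-1,0,1\}$), the coefficient of $\sigma_\alpha\sigma_\beta$ equals $\tfrac18 E[X_\alpha X_\beta X_\gamma^2]$, which is the fourth moment with index multiset $\{\alpha,\beta,\gamma,\gamma\}$ and hence vanishes by the last condition in (\ref{Sum:wv}) because $\alpha\neq\beta$; the other two pairwise coefficients vanish by symmetry, and the coefficient of $\sigma_\alpha\sigma_\beta\sigma_\gamma$ equals $\tfrac18 E[X_\alpha X_\beta X_\gamma]=0$ by the third-moment condition in (\ref{Sum:wv}). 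This yields exactly $p_{\sigma_\alpha\sigma_\beta\sigma_\gamma} = c + \sigma_\alpha c_\alpha + \sigma_\beta c_\beta + \sigma_\gamma c_\gamma$.

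Next I would propagate this formula down to the strata with one or two vanishing coordinates by marginalizing against the pairwise laws of Proposition \ref{Prob:XaXb}. Summing the octant formula over $\sigma_\gamma$ and invoking $P(X_\alpha=\sigma_\alpha,X_\beta=\sigma_\beta)=\tfrac1{36}$ gives $p_{\sigma_\alpha\sigma_\beta 0}$; summing that over $\sigma_\beta$ and using $P(X_\alpha=\sigma_\alpha,X_\beta=0)=\tfrac19$ gives $p_{\sigma_\alpha 0 0}$; a final summation with $P(X_\alpha=X_\beta=0)=\tfrac49$ gives $p_{000}$. Each step is a short linear computation reproducing the four displayed formulas, with the cross terms in $c_\gamma$ cancelling in pairs because $\sum_{\sigma_\gamma}\sigma_\gamma = 0$. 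As a consistency check I would verify total mass $1$ and recover the three pairwise laws, which also confirms that no isotropy condition supported on $\{\alpha,\beta,\gamma\}$ has been left unused.

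Finally the inequalities come solely from $p_{ijk}\geq 0$. Minimizing $c+\sigma_\alpha c_\alpha+\sigma_\beta c_\beta+\sigma_\gamma c_\gamma$ over signs forces $|c_\alpha|+|c_\beta|+|c_\gamma|\leq c$ (in particular $c\geq 0$), while minimizing the two-coordinate expression $\tfrac1{36}-2c-2\sigma_\zeta c_\zeta-2\sigma_\eta c_\eta$ over signs forces $c+|c_\zeta|+|c_\eta|\leq \tfrac1{72}$ for each pair $\zeta\neq\eta$ in $\{\alpha,\beta,\gamma\}$; the stated ranges $c\in[0,\tfrac1{72}]$ and $c_\zeta\in[-\tfrac1{72},\tfrac1{72}]$ then follow immediately, and the one-nonzero and all-zero nonnegativities are automatically satisfied. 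I expect the only genuinely delicate point to be the second paragraph—recognizing that the vanishing of the higher characters on the octant is exactly encoded by the third-moment condition and the $\{\alpha,\beta,\gamma,\gamma\}$-type fourth-moment condition; once the octant is pinned down, the rest is bookkeeping driven by Proposition \ref{Prob:XaXb} and sign optimization.
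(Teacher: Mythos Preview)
Your proposal is correct and uses the same ingredients as the paper's proof (the third-moment condition $E[X_\alpha X_\beta X_\gamma]=0$, the mixed fourth moments $E[X_\zeta^2 X_\eta X_\theta]=0$, the pairwise marginals from Proposition~\ref{Prob:XaXb}, and nonnegativity), but organizes the key octant computation differently. The paper proceeds by hand: from $E[(X_\alpha\pm X_\alpha^2)X_\beta X_\gamma]=0$ it shows that each difference $p_{\sigma_\alpha,\sigma_\beta,1}-p_{\sigma_\alpha,\sigma_\beta,-1}$ is independent of $(\sigma_\alpha,\sigma_\beta)$, defines $2c_\gamma$ (and by permutation $2c_\alpha,2c_\beta$) as these common values, sets $c:=p_{1,1,1}-c_\alpha-c_\beta-c_\gamma$, and then back-substitutes to obtain all eight octant probabilities. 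Your route is the Fourier/character expansion on $\{-1,1\}^3$: you observe directly that the coefficient of $\sigma_\zeta\sigma_\eta$ equals $\tfrac18 E[X_\zeta X_\eta X_\theta^2]$ and the coefficient of $\sigma_\alpha\sigma_\beta\sigma_\gamma$ equals $\tfrac18 E[X_\alpha X_\beta X_\gamma]$, both vanishing by isotropy, which pins down the octant formula in one stroke. This is more conceptual and scales cleanly, whereas the paper's version is elementary and self-contained without invoking the character basis. The remaining steps (marginalizing via Proposition~\ref{Prob:XaXb} to reach the strata with zero coordinates, and extracting the inequalities by optimizing signs in the nonnegativity constraints) are identical in both arguments; one minor slip in your sketch is that the second marginalization should invoke $P(X_\alpha=\sigma_\alpha,X_\gamma=0)=\tfrac19$ rather than $P(X_\alpha=\sigma_\alpha,X_\beta=0)$, but this is cosmetic.
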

\begin{proof}
Let $\alpha, \beta, \gamma \in \{1, 2, ..., d\}$ satisfy the relations $\alpha \neq \beta$, $\beta \neq \gamma$ and $\alpha \neq \gamma$.
For any $i, j, k \in \{-1, 0, 1\}$, for simplicity of notations we denote $P(X_\alpha = i, X_\beta = j, X_\gamma=k)$ by $p_{i,j,k}$.
Since $E[X_\alpha X_\beta X_\gamma] = 0$ and $E[X_\alpha^2 X_\beta X_\gamma] = 0$, it holds that
\begin{equation} \label{apa2:bg}
 0 = E[(X_\alpha + X_\alpha^2) X_\beta X_\gamma] = 2 p_{1,1,1} - 2 p_{1,1,-1} -2 p_{1,-1,1} + 2 p_{1,-1,-1}
\end{equation}
and
\begin{equation} \label{ama2:bg}
0 = E[(X_\alpha - X_\alpha^2) X_\beta X_\gamma] = - 2 p_{-1,1,1} + 2 p_{-1,1,-1} + 2 p_{-1,-1,1} - 2 p_{-1,-1,-1}.
\end{equation}
By (\ref{apa2:bg}), we have that 
\begin{equation} \label{1:m1:1mm1}
p_{1,1,1} - p_{1,1,-1} = p_{1,-1,1} - p_{1,-1,-1}. 
\end{equation}
Replacing $(\alpha,\beta)$ by $(\beta, \alpha)$, we deduce from (\ref{1:m1:1mm1}) that
\begin{equation} \label{R:ab:1mm1}
\begin{split}
&P(X_\alpha = 1, X_\beta = - 1, X_\gamma = 1) - P(X_\alpha = 1, X_\beta = - 1, X_\gamma = - 1) \\
&\ \ = P(X_\alpha = 1, X_\beta = 1, X_\gamma = 1) - P(X_\alpha = 1, X_\beta = 1, X_\gamma = -1) \\
&\ \ = P(X_\beta = 1, X_\alpha = 1, X_\gamma = 1) - P(X_\beta = 1, X_\alpha = 1, X_\gamma = -1) \\
&\ \ = P(X_\beta = 1, X_\alpha = - 1, X_\gamma = 1) - P(X_\beta = 1, X_\alpha = - 1, X_\gamma = -1),
\end{split}
\end{equation}
i.e., we obtain that $p_{1,-1,1} - p_{1,-1,-1} = p_{-1,1,1} - p_{-1,1,-1}$.
In addition, (\ref{ama2:bg}) implies that
\begin{equation} \label{m1m11mm1}
p_{-1,1,1} - p_{-1,1,-1} = p_{-1,-1,1} - p_{-1,-1,-1}.
\end{equation}
Combining (\ref{1:m1:1mm1}), (\ref{R:ab:1mm1}) and (\ref{m1m11mm1}),
we can conclude that the quantity $p_{\sigma_\alpha, \sigma_\beta, 1} - p_{\sigma_\alpha, \sigma_\beta, -1}$ is independent of $\sigma_\alpha, \sigma_\beta \in \{-1, 1\}$. 
Hence, without causing any ambiguity, we may set $2 c_\gamma := p_{\sigma_\alpha, \sigma_\beta, 1} - p_{\sigma_\alpha, \sigma_\beta, -1}$ for $\sigma_\alpha, \sigma_\beta \in \{-1, 1\}$. 
By applying analogous argument to consider 
\begin{equation*}
 \begin{aligned}
 E[X_\alpha (X_\beta + X_\beta^2) X_\gamma] &=& 0 &=& E[X_\alpha (X_\beta - X_\beta^2) X_\gamma],& \\
 E[X_\alpha X_\beta (X_\gamma + X_\gamma^2)] &=& 0 &=& E[X_\alpha X_\beta (X_\gamma - X_\gamma^2)],& 
 \end{aligned}
\end{equation*}
we can deduce that $p_{1, \sigma_\beta, \sigma_\gamma} - p_{-1, \sigma_\beta, \sigma_\gamma} =: 2 c_\alpha$ is independent of $\sigma_\beta, \sigma_\gamma \in \{-1, 1\}$ and $p_{\sigma_\alpha, 1, \sigma_\gamma} - p_{\sigma_\alpha, - 1, \sigma_\gamma} =: 2 c_\beta$ is independent of $\sigma_\alpha, \sigma_\gamma \in \{-1, 1\}$.

Let $c := p_{1,1,1} - c_\alpha - c_\beta - c_\gamma$.
Then, 
\begin{equation} \label{p3:1:m1}
 \begin{aligned}
 p_{1,1,-1} &=& p_{1,1,1} - 2 c_\gamma &\implies& p_{1,1,-1} &=& &c + c_\alpha + c_\beta - c_\gamma, \\
 p_{-1,1,1} &=& p_{1,1,1} - 2 c_\alpha &\implies& p_{-1,1,1} &=& &c - c_\alpha + c_\beta + c_\gamma, \\
 p_{1,-1,1} &=& p_{1,1,1} - 2 c_\beta &\implies& p_{1,-1,1} &=& &c + c_\alpha - c_\beta + c_\gamma. \\
 \end{aligned}
\end{equation}
Using (\ref{p3:1:m1}), we further deduce that
\begin{equation*}
 \begin{aligned}
 p_{-1,1,-1} &=& p_{-1,1,1} - 2 c_\gamma &\implies& p_{-1,1,-1} &=& &c - c_\alpha + c_\beta - c_\gamma, \\
 p_{1,-1,-1} &=& p_{1,-1,1} - 2 c_\gamma &\implies& p_{1,-1,-1} &=& &c + c_\alpha - c_\beta - c_\gamma, \\
 p_{-1,-1,1} &=& p_{-1,1,1} - 2 c_\beta &\implies& p_{-1,-1,1} &=& &c - c_\alpha - c_\beta + c_\gamma, \\
 p_{-1,-1,-1} &=& p_{-1,-1,1} - 2 c_\gamma &\implies& p_{-1,-1,-1} &=& &c - c_\alpha - c_\beta - c_\gamma.
 \end{aligned}
\end{equation*}
Summarizing what we have obtained, it holds that
\begin{equation} \label{p3:1m1}
p_{\sigma_\alpha, \sigma_\beta, \sigma_\gamma} = c + \sigma_\alpha c_\alpha + \sigma_\beta c_\beta + \sigma_\gamma c_\gamma
\end{equation}
for any $\sigma_\alpha, \sigma_\beta, \sigma_\gamma \in \{-1, 1\}$ with $c := p_{1,1,1} - c_\alpha - c_\beta - c_\gamma$.

By the fourth bullet point of Proposition \ref{Prob:XaXb} and (\ref{p3:1m1}), we can deduce that for $\sigma_\beta, \sigma_\gamma \in \{-1, 1\}$
\begin{equation*}
p_{0,\sigma_\beta,\sigma_\gamma} = p_{\sigma_\beta, \sigma_\gamma} - p_{1,\sigma_\beta,\sigma_\gamma} - p_{-1,\sigma_\beta,\sigma_\gamma} = \frac{1}{36} - 2c - 2 \sigma_\beta c_\beta - 2 \sigma_\gamma c_\gamma.
\end{equation*}
Analogously, for $\sigma_\alpha, \sigma_\gamma \in \{-1, 1\}$, it holds that
\begin{equation} \label{p3:b0}
p_{\sigma_\alpha,0,\sigma_\gamma} = \frac{1}{36} - 2c - 2 \sigma_\alpha c_\alpha - 2 \sigma_\gamma c_\gamma
\end{equation}
and for $\sigma_\alpha, \sigma_\beta \in \{-1, 1\}$, it holds that
\begin{equation*}
p_{\sigma_\alpha,\sigma_\beta,0} = \frac{1}{36} - 2c - 2 \sigma_\alpha c_\alpha - 2 \sigma_\beta c_\beta.
\end{equation*}
Furthermore, for $\sigma_\gamma \in \{-1, 1\}$, we deduce from the third bullet point of Proposition \ref{Prob:XaXb} and (\ref{p3:b0}) that
\begin{align} \label{p3:a0b0}
p_{0,0,\sigma_\gamma} = p_{0,\sigma_\gamma} - p_{1,0,\sigma_\gamma} - p_{-1,0,\sigma_\gamma} = \frac{1}{18} + 4c + 4 \sigma_\gamma c_\gamma. 
\end{align}
Similarly, for $\sigma_\alpha \in \{-1, 1\}$, it holds that
\begin{align*}
p_{\sigma_\alpha,0,0} = \frac{1}{18} + 4c + 4 \sigma_\alpha c_\alpha
\end{align*}
and for $\sigma_\beta \in \{-1, 1\}$, it holds that
\begin{align*}
p_{0,\sigma_\beta,0} = \frac{1}{18} + 4c + 4 \sigma_\beta c_\beta.
\end{align*}
Using the first bullet point of Proposition \ref{Prob:XaXb} and (\ref{p3:a0b0}), we deduce that
\begin{align*}
p_{0,0,0} = p_{0,0} - p_{0,0,1} - p_{0,0,-1} = \frac{1}{3} - 8c.
\end{align*}

Finally, we let
\begin{equation*}
\widetilde{\operatorname{sgn}} (x) = \left\{
 \begin{aligned}
 &1,& &\text{if}& &x \geq 0, \\
 &-1,& &\text{if}& &x < 0.
 \end{aligned}
\right.
\end{equation*}
By considering (\ref{p3:1m1}), we can deduce from 
\begin{align*}
P(X_\alpha = - \widetilde{\operatorname{sgn}}(c_\alpha), X_\beta = - \widetilde{\operatorname{sgn}}(c_\beta), X_\gamma = - \widetilde{\operatorname{sgn}}(c_\gamma)) \geq 0
\end{align*}
that
\begin{align*} 
0 \leq |c_\alpha| + |c_\beta| + |c_\gamma| \leq c.
\end{align*}
Since $P(X_\alpha = \widetilde{\operatorname{sgn}}(c_\alpha), X_\beta = \widetilde{\operatorname{sgn}}(c_\beta), X_\gamma = 0) \geq 0$, we have that 
\begin{align*}
c + |c_\alpha| + |c_\beta| \leq \frac{1}{72}.
\end{align*}
Similarly, by considering $P(X_\alpha = \widetilde{\operatorname{sgn}}(c_\alpha), X_\beta = 0, X_\gamma = \widetilde{\operatorname{sgn}}(c_\gamma)) \geq 0$ and $P(X_\alpha = 0, X_\beta = \widetilde{\operatorname{sgn}}(c_\beta), X_\gamma = \widetilde{\operatorname{sgn}}(c_\gamma)) \geq 0$, we also have that
\begin{align*}
c + |c_\alpha| + |c_\gamma| \leq \frac{1}{72} \quad \text{and} \quad c + |c_\beta| + |c_\gamma| \leq \frac{1}{72}.
\end{align*}
This completes the proof of Proposition \ref{Pb:XaXbXg}.
\end{proof}
\begin{proposition} \label{Iso:3D}
Let $d=3$ and $(\mathcal{V}, w)$ be a lattice.
Suppose that the support of $X_\alpha$ is in $\{-1,0,1\}$ for any $\alpha = 1,2,3$.
For any $1 \leq \alpha, \beta, \gamma \leq 3$ satisfying $\alpha \neq \beta$, $\alpha \neq \gamma$ and $\beta \neq \gamma$, if there exist constants $c \in [0, \frac{1}{72}]$ and $c_\alpha$, $c_\beta$, $c_\gamma \in [- \frac{1}{72}, \frac{1}{72}]$ such that 
\begin{itemize}
\item $P(X_\alpha = 0) = \frac{2}{3}$ and $P(X_\alpha = 1) = P(X_\alpha = -1) = \frac{1}{6}$, 
\item $P(X_\alpha = X_\beta = 0) = \frac{4}{9}$,
\item $P(X_\alpha = \sigma_\alpha, X_\beta = 0) = \frac{1}{9}$ \quad  for \quad $\sigma_\alpha \in \{-1,1\}$,
\item $P(X_\alpha = \sigma_\alpha, X_\beta = \sigma_\beta) =  \frac{1}{36}$ \quad for \quad $\sigma_\alpha, \sigma_\beta \in \{-1,1\}$,
\item $P(X_\alpha = X_\beta = X_\gamma = 0) = \frac{1}{3} - 8c$,
\item $P(X_\alpha = \sigma_\alpha, X_\beta = X_\gamma = 0) =  \frac{1}{18} + 4c + 4 \sigma_\alpha c_\alpha$ \quad  for \quad $\sigma_\alpha \in \{-1, 1\}$,
\item $P(X_\alpha = \sigma_\alpha, X_\beta = \sigma_\beta, X_\gamma =  0) = \frac{1}{36} - 2c - 2 \sigma_\alpha c_\alpha - 2 \sigma_\beta  c_\beta$ \quad  for \quad $\sigma_\alpha, \sigma_\beta \in \{-1, 1\}$,
\item $P(X_\alpha = \sigma_\alpha, X_\beta = \sigma_\beta, X_\gamma = \sigma_\gamma) = c + \sigma_\alpha c_\alpha + \sigma_\beta c_\beta + \sigma_\gamma c_\gamma$ \quad for \quad$\sigma_\alpha, \sigma_\beta, \sigma_\gamma \in \{-1, 1\}$,
\end{itemize}
then $(\mathcal{V}, w)$ is isotropic.
\end{proposition}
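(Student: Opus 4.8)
The plan is to verify directly that the joint law prescribed by the eight bullet points forces all five conditions of Definition \ref{Def:isLa2}, equivalently the summation conditions (\ref{Sum:wv}) with $c_s^2 = \frac{1}{3}$ and $c_s^4 = \frac{1}{9}$. Under the correspondence of Subsection \ref{Sub:Cuba}, with $P(\{i\}) = w_i$ and $X_\alpha(i) = v_{i,\alpha}$, these read
\begin{align*}
&E[1] = 1, \quad E[X_\alpha] = 0, \quad E[X_\alpha X_\beta] = \tfrac{1}{3}\delta_{\alpha\beta}, \quad E[X_\alpha X_\beta X_\gamma] = 0, \\
&E[X_\alpha X_\beta X_\gamma X_\zeta] = \tfrac{1}{9}\big(\delta_{\alpha\beta}\delta_{\gamma\zeta} + \delta_{\alpha\gamma}\delta_{\beta\zeta} + \delta_{\alpha\zeta}\delta_{\beta\gamma}\big).
\end{align*}
Since each $X_\alpha$ is supported in $\{-1,0,1\}$ and the bullet points specify all $27$ values $p_{ijk} := P(X_1 = i, X_2 = j, X_3 = k)$ with $i,j,k \in \{-1,0,1\}$, every moment of order at most four is a finite linear combination of the $p_{ijk}$, so the whole statement reduces to elementary summation. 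I note that the inequalities appearing in Proposition \ref{Pb:XaXbXg} are not needed here: they only guarantee $p_{ijk} \geq 0$, which is automatic because $(\mathcal{V}, w)$ is a genuine lattice with positive weights.

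For the moments involving at most two distinct indices I would simply read off the values from the marginal and pairwise bullet points (which are precisely the conclusions of Proposition \ref{Prob:Xa} and Proposition \ref{Prob:XaXb}). The marginal $P(X_\alpha = 0) = \frac{2}{3}$, $P(X_\alpha = \pm 1) = \frac{1}{6}$ immediately gives $\sum_i w_i = 1$ (the normalization, hence the probability-space condition), $E[X_\alpha] = 0$, and $E[X_\alpha^2] = E[X_\alpha^4] = \frac{1}{3}$. Using the pointwise identity $X_\alpha^3 = X_\alpha$ on $\{-1,0,1\}$ together with the pairwise bullets yields, for $\alpha \neq \beta$, the identities $E[X_\alpha X_\beta] = E[X_\alpha^2 X_\beta] = E[X_\alpha^3 X_\beta] = 0$ and $E[X_\alpha^2 X_\beta^2] = 4 \cdot \frac{1}{36} = \frac{1}{9}$, which match the required right-hand sides $\frac{1}{3}\delta_{\alpha\beta}$, $0$, and $\frac{1}{9}$.

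The only genuinely three-variable moments are $E[X_1 X_2 X_3]$ and $E[X_1^2 X_2 X_3]$ together with their index permutations, and these are where the free parameters $c, c_1, c_2, c_3$ enter, through $p_{\sigma_1 \sigma_2 \sigma_3} = c + \sigma_1 c_1 + \sigma_2 c_2 + \sigma_3 c_3$. The key observation is that each such moment is a sum over $(\sigma_1, \sigma_2, \sigma_3) \in \{-1,1\}^3$ of a product of signs times the affine weight $c + \sum_k \sigma_k c_k$, so after expansion it factorizes over the three indices; in each resulting term at least one index appears to an odd power and therefore contributes a factor $\sum_{\sigma = \pm 1} \sigma = 0$. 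Hence $E[X_1 X_2 X_3] = 0$ and $E[X_1^2 X_2 X_3] = 0$, in agreement with the prescribed values. Since for $d = 3$ there is no fourth-order moment with four distinct indices, this exhausts all cases, and I conclude that all conditions of Definition \ref{Def:isLa2} hold, i.e. $(\mathcal{V}, w)$ is isotropic.

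The main obstacle is organizational rather than analytic: one must enumerate the fourth-order moments by their index-multiplicity pattern ($X_\alpha^4$, $X_\alpha^3 X_\beta$, $X_\alpha^2 X_\beta^2$, $X_\alpha^2 X_\beta X_\gamma$) and check that the parameters $c_\alpha$ cancel in every mixed moment. The reduction $X_\alpha^3 = X_\alpha$ and the sign-cancellation identity $\sum_{\sigma = \pm 1}\sigma = 0$ are exactly what make each of these verifications immediate, so no genuine difficulty arises.
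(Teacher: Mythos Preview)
Your proposal is correct and follows essentially the same approach as the paper: a direct verification that all moments of order at most four take the required values. Your execution is slightly cleaner than the paper's --- you exploit the pointwise identity $X_\alpha^3 = X_\alpha$ and the factorization $\sum_{\sigma \in \{-1,1\}^3} \sigma^\tau (c + \sum_k \sigma_k c_k)$ to dispatch the three-variable moments in one stroke, whereas the paper simply expands each of $E[X_\alpha X_\beta X_\gamma]$ and $E[X_\alpha^2 X_\beta X_\gamma]$ term by term --- but the underlying strategy is identical.
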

\begin{proof}
This proof contains nothing but direct calculations. 
It can be easily verified that 
\begin{equation*}
\left\{
 \begin{aligned}
 E[X_\alpha] &= 0, \\
 E[X_\alpha^2] &= \frac{1}{3}, \\
 E[X_\alpha^3] &= 0, \\
 E[X_\alpha^4] &= \frac{1}{3}, \\
 \sum_{\sigma_\alpha \in \{-1,0,1\}} P(X_\alpha = \sigma_\alpha) &= 1, \\
 \sum_{\sigma_\alpha, \sigma_\beta \in \{-1,0,1\}} P(X_\alpha = \sigma_\alpha, X_\beta = \sigma_\beta) &= 1, \\
 \sum_{\sigma_\alpha, \sigma_\beta, \sigma_\gamma \in \{-1,0,1\}} P(X_\alpha = \sigma_\alpha, X_\beta = \sigma_\beta, X_\gamma = \sigma_\gamma) &= 1.
 \end{aligned}
\right.
\end{equation*}
For $\alpha, \beta \in \{1,2,3\}$ with $\alpha \neq \beta$, we have that
\begin{equation*}
\left\{
 \begin{aligned}
 E[X_\alpha X_\beta] &= \frac{1}{36} \cdot \{ 1^2 + 1 \cdot (-1) + (-1) \cdot 1 + (-1)^2 \} = 0, \\
 E[X_\alpha^2 X_\beta] &= \frac{1}{36} \cdot \{ 1^3 + 1^2 \cdot (-1) + (-1)^2 \cdot 1 + (-1)^2 \cdot (-1) \} = 0, \\
 E[X_\alpha^3 X_\beta] &= \frac{1}{36} \cdot \{ 1^4 + 1^3 \cdot (-1) + (-1)^3 \cdot 1 + (-1)^3 \cdot (-1) \} = 0, \\
 E[X_\alpha^2 X_\beta^2] &= \frac{1}{36} \cdot \{ 1^4 + 1^2 \cdot (-1)^2 + (-1)^2 \cdot 1^2 + (-1)^2 \cdot (-1)^2 \} = \frac{1}{9}.
 \end{aligned}
\right.
\end{equation*}
For $\alpha, \beta, \gamma \in \{1,2,3\}$ with $\alpha \neq \beta$, $\alpha \neq \gamma$ and $\beta \neq \gamma$, we have that
\begin{align*}
E[X_\alpha X_\beta X_\gamma] &=  1^3 \cdot (c + \sigma_\alpha + \sigma_\beta + \sigma_\gamma) + 1^2 \cdot (-1) \cdot (c + \sigma_\alpha + \sigma_\beta - \sigma_\gamma) \\
&\ \ + 1 \cdot (-1) \cdot 1 \cdot (c + \sigma_\alpha - \sigma_\beta + \sigma_\gamma) + 1 \cdot (-1) \cdot (-1) \cdot (c + \sigma_\alpha - \sigma_\beta - \sigma_\gamma) \\
&\ \ + (-1) \cdot 1^2 \cdot (c - \sigma_\alpha + \sigma_\beta + \sigma_\gamma) + (-1) \cdot 1 \cdot (-1) \cdot (c - \sigma_\alpha + \sigma_\beta - \sigma_\gamma) \\
 &\ \ + (-1) \cdot (-1) \cdot 1 \cdot (c - \sigma_\alpha - \sigma_\beta + \sigma_\gamma) + (-1) \cdot (-1) \cdot (-1) \cdot (c - \sigma_\alpha - \sigma_\beta - \sigma_\gamma) \\
&= 0
\end{align*}
and
\begin{align*}
E[X_\alpha^2 X_\beta X_\gamma] &=  1^4 \cdot (c + \sigma_\alpha + \sigma_\beta + \sigma_\gamma) + 1^3 \cdot (-1) \cdot (c + \sigma_\alpha + \sigma_\beta - \sigma_\gamma) \\
&\ \ + 1^2 \cdot (-1) \cdot 1 \cdot (c + \sigma_\alpha - \sigma_\beta + \sigma_\gamma) + 1^2 \cdot (-1) \cdot (-1) \cdot (c + \sigma_\alpha - \sigma_\beta - \sigma_\gamma) \\
&\ \ + (-1)^2 \cdot 1^2 \cdot (c - \sigma_\alpha + \sigma_\beta + \sigma_\gamma) + (-1)^2 \cdot 1 \cdot (-1) \cdot (c - \sigma_\alpha + \sigma_\beta - \sigma_\gamma) \\
 &\ \ + (-1)^2 \cdot (-1) \cdot 1 \cdot (c - \sigma_\alpha - \sigma_\beta + \sigma_\gamma) + (-1)^2 \cdot (-1) \cdot (-1) \cdot (c - \sigma_\alpha - \sigma_\beta - \sigma_\gamma) \\
&= 0.
\end{align*}
We thus obtain Proposition \ref{Iso:3D}.
\end{proof}
\begin{remark} \label{Cha:Iso3}
Combining Proposition \ref{Pb:XaXbXg} and Proposition \ref{Iso:3D}, we obtain a characterization for $3$D isotropic lattices.
Different from the $2$D case, $3$D isotropic lattice associated with the speed of sound $3^{-\frac{1}{2}}$ is not unique even if we require $X_\alpha$ to be supported in $[-1,1]$.
For example, 
\begin{itemize}
\item If $c = \frac{1}{72}$ and $c_\alpha = c_\beta = c_\gamma = 0$, this corresponds to the D$3$Q$15$ scheme,
\item If $c = c_\alpha = c_\beta = c_\gamma = 0$, this corresponds to the D$3$Q$19$ scheme,
\item If $c = \frac{1}{216}$ and $c_\alpha = c_\beta = c_\gamma = 0$, this corresponds to the D$3$Q$27$ scheme,
\end{itemize}
see e.g. \cite{LBM6}.
\end{remark}
\begin{remark} \label{Sharp:cs}
The characterizations of $2$D and $3$D isotropic lattices (Corollary \ref{Iso:2D}, Proposition \ref{Pb:XaXbXg} and \ref{Iso:3D}, respectively) in this section rely sharply on the assumption $c_s = 3^{-\frac{1}{2}}$.
Indeed, suppose that $c_s \neq 3^{-\frac{1}{2}}$. Then, even if we
assume that $X_\alpha$ is supported in $[-1,1]$, it is possible for
$X_\alpha$ to be supported on values other than $-1,0,1$.
For example, in the $2$D case, if we consider $\mathcal{V} = \{v_0, v_1, v_2, ..., v_6\} \subset \mathbf{R}^2$ where
\begin{equation*}
\left\{
\begin{aligned}
v_0 &= 0,& \\
v_j &= \Big( \cos{\frac{2 \pi j}{6}}, \, \sin{\frac{2 \pi j}{6}} \Big), \quad j = 1, ..., 6&
\end{aligned}
\right.
\end{equation*}
and $w = (w_0, w_1, w_2, ..., w_6) \in \mathbf{R}_+^7$ where
\begin{equation*}
\left\{
\begin{aligned}
w_0 &= \frac{1}{2},& \\
w_j &= \frac{1}{12}, \quad j = 1, ..., 6,&
\end{aligned}
\right.
\end{equation*}
then it can be easily checked that the combination $(\mathcal{V}, w)$ is an isotropic lattice associated with the speed of sound $c_s = \frac{1}{2}$, which corresponds to the D$2$Q$7$ scheme.
If we apply the proof of Proposition \ref{Prob:Xa} to the setting of $(\mathcal{V},w)$, then we shall face the problem that $E[X_\alpha^2] \neq E[X_\alpha^4]$, which further induces the possibility that $P(X_\alpha^2 > X_\alpha^4) > 0$.
Hence, it is not necessary for $X_\alpha$ to support in $\{-1,0,1\}$.
\end{remark}
\begin{remark} \label{supp:Xa}
It is worth noting that requiring the random variable $X_\alpha$ to be
supported in $[-1,1]$ is not necessary for a lattice to be
isotropic. If $(\mathcal{V},w)$ is an isotropic lattice associated
with the speed of sound $c_s$, then for any real number $\lambda > 0$, the lattice $(\lambda \mathcal{V}, w)$ is also an isotropic lattice associated with speed of sound $\frac{c_s}{\lambda}$ where $\lambda \mathcal{V} := \{ \lambda v_i \bigm| 0 \leq i \leq n \}$.
\end{remark}

\section{Numerical investigation of the hydrodynamic limit of the Lattice BGK problem}
\label{Sec:NumSol}

In this section we conduct numerical computations designed to
  illustrate the hydrodynamic limit investigated in Section
  \ref{Sec:HydroLi}. This is done by comparing the macroscopic
  velocity fields derived from the solutions of the Lattice BGK system
  \eqref{LBE} which are obtained for a sequence of decreasing values
  of $\varepsilon$ to the solutions of the Navier-Stokes system
  \eqref{InNS:lim} where the initial condition for the former system,
  $g^{0,\varepsilon}$, is chosen to be consistent with the initial
  condition for the latter, $u_0$, cf.~Theorem \ref{MT}. For
  simplicity and to fix attention, we focus here on the 2D setting
  ($d = 2$) and consider both the Lattice BGK and the Navier-Stokes
  systems \eqref{LBE} and \eqref{InNS:lim} on a torus
  $\mathbf{T}_x^2 := [0,1]^2$ rather than on an unbounded domain
  $\mathbf{R}_x^2$, which is motivated by computational
  considerations. Defining vorticity
  $\omega := \nabla_x^\perp \cdot u$, where
  $\nabla_x^\perp := (-\partial_{x_2},\partial_{x_1})$, it is
  convenient to rewrite the Navier-Stokes system in the vorticity form
  obtained applying the operator $\left( \nabla_x^\perp \cdot \right)$
  to the first equation in \eqref{InNS:lim}
\begin{equation} \label{2DNSvort}
\left\{
 \begin{aligned}
   \partial_t \omega + (u\cdot \nabla_x)\omega - c_s^2 \nu \Delta_x \omega &= 0,& \\
 u &=  \nabla_x^\perp \Delta_x^{-1} \omega,& \\
 \omega(x,0) &= \omega_0 := \nabla_x^\perp \cdot u_0,&
 \end{aligned}
\right.
\end{equation}
where $\Delta_x^{-1}$ is the inverse Laplacian equipped with the
periodic boundary conditions on $\mathbf{T}_x^2$. Since solutions of
the 2D Navier-Stokes system \eqref{2DNSvort} conserve the mean
vorticity, $(d/dt) \, \int_{\mathbf{T}_x^2} \omega(x,t) \, dx = 0$,
$t \ge 0$, it is necessary to restrict the admissible initial
conditions $\omega_0$ to have a zero mean,
$\int_{\mathbf{T}_x^2} \omega_0(x) \, dx = 0$, as this will ensure the
Laplacian can be inverted at all times (cf.~the second equation in
\eqref{2DNSvort}).  For the LBGK problem we consider the D2Q9 lattice
given by \eqref{D2Q9w}--\eqref{D2Q9v} and the speed of sound
$c_s = \frac{1}{\sqrt{3}}$.

We then focus on the following two macroscopic initial conditions
which for convenience are expressed here in the vorticity form:
\begin{itemize}
\item the Taylor-Green vortex
  \begin{equation}
\omega_0(x_1,x_2) = 10 \sin(2 \pi a x_1) \sin(2 \pi b x_2),
    \label{TG}
  \end{equation}
  where $a=b=2$, which leads to a solution of system \eqref{2DNSvort} where the
  nonlinear term $(u\cdot \nabla_x)\omega$ vanishes identically for
  all times $t \ge 0$, such that $\omega$ effectively solves the heat
  equation $\partial_t \omega - c_s^2 \nu \Delta_x \omega = 0$ on
  $\mathbf{T}_x^2$ and has therefore the form \cite{wmz06}
   \begin{equation}
\omega(x_1,x_2,t) = \omega_0(x_1,x_2) \ \exp\left[- 4 \pi^2\left( a^2 + b^2 \right) c_s^2 \nu \, t\right],
\label{TGsol}
\end{equation}

\item the perturbed Taylor-Green vortex
  \begin{equation}
\omega_0(x_1,x_2) = - \sin(2 \pi x_1) \sin(2 \pi x_2) + \exp\left[ - 50
  \left((x_1 - 1/2)^2 + (x_2 - 1/2)^2\right)\right] + C,
    \label{TGvort}
  \end{equation}
  where the constant $C$ is chosen to ensure that this initial data
  satisfies the zero-mean condition; solutions of the Navier-Stokes
  system \eqref{2DNSvort} subject to the initial condition
  \eqref{TGvort} need to be found numerically as described below in
  Section \ref{numerics}; since our spatial domain is the 2D
    torus, the function in \eqref{TGvort} should be interpreted as a
    periodic extension of a function restricted to
    $0 \le x_1,x_2 \le 1$ and since the second term on the right-hand
    side is not a periodic function, this initial condition has in
    fact discontinuous derivatives; however, due to the rapid decay of
    the exponential function away from the point $(1/2,1/2)$, the
    magnitude of this discontinuity is negligible and does not affect
  the numerical results.
\end{itemize}
 The microscopic initial condition
  $g^{0,\varepsilon}$ for the Lattice BGK system is then obtained from
  \eqref{TG} or \eqref{TGvort} using relation \eqref{gi0eps} in which
  we set $u_0 = \nabla_x^\perp \Delta_x^{-1} \omega_0$ and, without
  the loss of generality, $\rho_0(x_1,x_2) = 1$,
  $\forall (x_1,x_2) \in \mathbf{T}_x^2$.

\subsection{Numerical solution of the Lattice BGK and Navier-Stokes
  equations}
\label{numerics}

The Lattice BGK and Navier-Stokes systems \eqref{LBE} and
\eqref{2DNSvort} are approximated numerically using a standard
pseudo-spectral approach where the dependence of the solution on the
space variable $x$ is represented in terms of a truncated Fourier
series and the use of this ansatz can be interpreted as
  application of the cutoff function $\Lambda_\varepsilon$ introduced
  in \eqref{Ctf:op}. Derivatives are then evaluated exactly in the
Fourier (spectral) space whereas all product terms are computed in the
physical space with dealiasing \cite{Canuto2006book}.  Discrete
Fourier transforms relating the two representations are evaluated
using the FFT algorithm. The system of coupled ordinary differential
equations (ODEs) resulting from this approximation is then integrated
in time using the standard fourth-order Runge-Kutta method (RK4). This
is an explicit approach and hence only conditionally stable, so care
must be exercised to ensure the stability of these computations by
using a sufficiently small time step $\Delta t$. In fact, the Lattice
BGK problem is ``stiff'', in the sense that the maximum allowable time
step decreases as $\varepsilon \rightarrow 0$. For both problems
introduced above, the stability and accuracy of computations was
carefully verified by performing these computations with different
time steps $\Delta t$ and different numerical resolutions $N$ (where
$N$ is the number of grid points used to discretize the domain
$\mathbf{T}_x^2$ in each direction). Moreover, the numerical solution
of the Navier-Stokes system \eqref{2DNSvort} can be validated by
considering the problem with the initial condition \eqref{TG}, where
an exact solution is available, cf.~\eqref{TGsol}. This is in fact a
demanding test as it requires the vanishing of the nonlinear term in
the approximate solution. This approach has been numerically
implemented in MATLAB and the code is available on Github
\cite{MatharuGitBoltzmann}.

In the numerical solutions of the Lattice BGK and the Navier-Stokes
system \eqref{LBE} and \eqref{2DNSvort} discussed below the spatial
resolution is $N = 128$. In the latter case the time step is
$\Delta t= 2\cdot 10^{-6}$, whereas in the former it is in the range
$\Delta t \in [2\cdot 10^{-6}, 2\cdot 10^{-4}]$ depending on the value
of $\varepsilon$ ($\Delta t$ is smaller for decreasing
$\varepsilon$). Due to this limitation, we did not consider
  values of $\varepsilon$ smaller than $10^{-1}$.

\subsection{Results}
\label{results}

Before examining the dependence of the difference between the
macroscopic solutions
$\omega^\varepsilon := \nabla_x^\perp \cdot u^\varepsilon$ of the Lattice
BGK system \eqref{LBE} and the solutions $\omega$ of the Navier-Stokes
system \eqref{2DNSvort} on $\varepsilon$, we provide some additional
information about the flows considered. In both cases, the kinematic
viscosity (or the relaxation time in case of the Lattice BGK system)
is $\nu = 10^{-4}$.

As is evident from its exact solution in \eqref{TGsol}, due to the
absence of the nonlinear effects, the structure of the Navier-Stokes
flow corresponding to the initial condition \eqref{TG} remains
unchanged in time with only its magnitude vanishing
exponentially. Snapshots of the vorticity field $\omega(x,t)$ at times
$t=0,8,16,32$ during the evolution of the Navier-Stokes flow with the
initial condition \eqref{TGvort} are shown in Figure
\ref{fig:vort4}. Stretching of the vortices present in the initial
field $\omega_0$ is evident, giving rise to the formation of thin
elongated filaments which is the main dynamic mechanism sustaining the
enstrophy cascade in 2D turbulence \cite{Lesieur1993book}. To further
characterize this flow, we define the enstrophy and palinstrophy as
\begin{align}
  \mathcal{E}(t) & := \frac{1}{2} \int_{\mathbf{T}_x^2} \omega^2 \,dx,  \label{Et} \\
  \mathcal{P}(t) & := \frac{1}{2} \int_{\mathbf{T}_x^2} |\nabla\omega|^2 \,dx,  \label{Pt} 
\end{align}
which are equivalent to, respectively, the $L^2$ norm and the $H^1$
seminorm of the vorticity. It can be shown that, for smooth solution
of the Navier-Stokes system \eqref{2DNSvort}, the time evolution of
these quantities is governed by the equations \cite{ap14}
\begin{align}
  \frac{d\mathcal{E}}{dt} & = - 2 \nu \mathcal{P},  \label{dEdt} \\
  \frac{d\mathcal{P}}{dt} & =  \int_{\mathbf{T}_x^2} (u\cdot\nabla_x)\omega \Delta_x \omega \,dx
                           - \nu \int_{\mathbf{T}_x^2} ( \Delta_x \omega)^2 \,dx. \label{dPdt} 
\end{align}
We see that, while the enstrophy $\mathcal{E}(t)$ is a
nonincreasing function of time, the evolution of the palinstrophy is a
result of the competition between the stretching of vorticity
filaments and viscous dissipation represented, respectively, by the
cubic and the negative-definite quadratic term in \eqref{dPdt}. This
behavior is indeed confirmed by the data shown in Figures
\ref{fig:EtPt}a and \ref{fig:EtPt}b, where, in particular, we observe
that the palinstrophy $\mathcal{P}(t)$ initially increases by a factor
of about three. This demonstrates that the Navier-Stokes flow
corresponding to the initial condition \eqref{TGvort} is at least for
the times considered in Figures \ref{fig:vort4} and \ref{fig:EtPt}
dominated by nonlinear effects. An animated version of these figures
is available online on YouTube \cite{GHMPSY24a_Supp1}.

\begin{figure}
  \centering
 \includegraphics[width=0.8\textwidth]{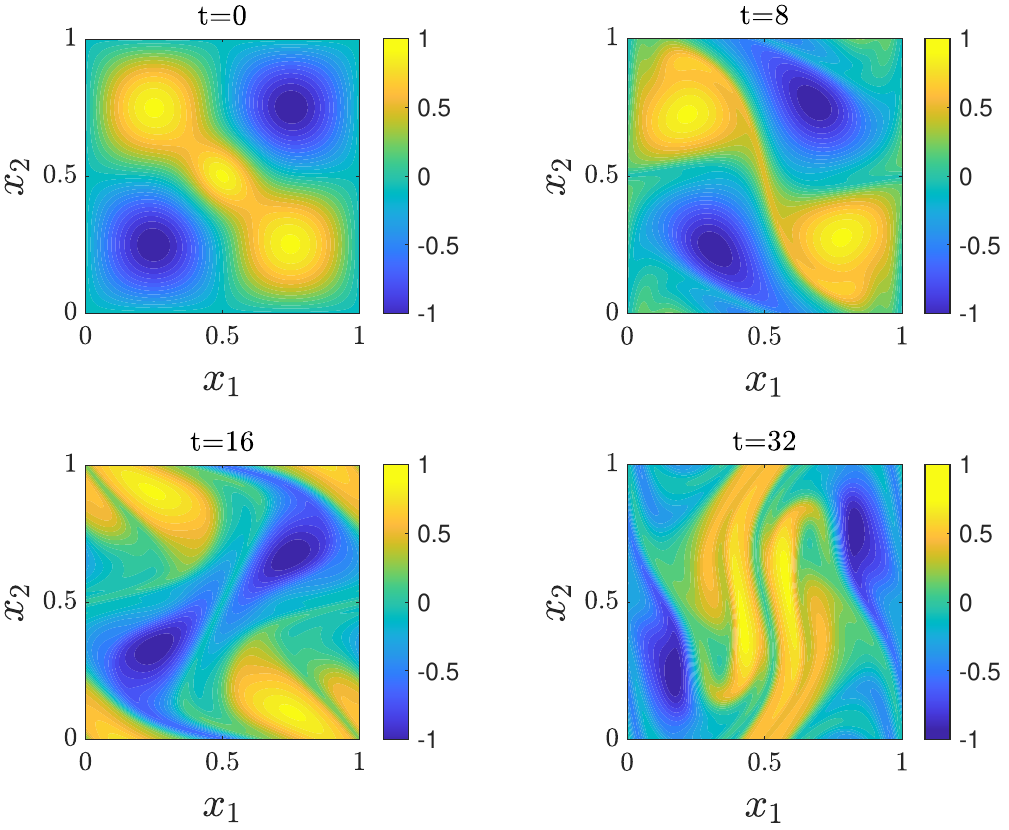}
 \caption{Snapshots of the vorticity field $\omega(x_1,x_2,t)$ at
   times $t=0,8,16,32$ during the evolution of the Navier-Stokes flow
   with the initial condition \eqref{TGvort}. }
  \label{fig:vort4}
\end{figure}

\begin{figure}
  \centering
 \mbox{
     \subfigure[]{\includegraphics[width=0.45\textwidth]{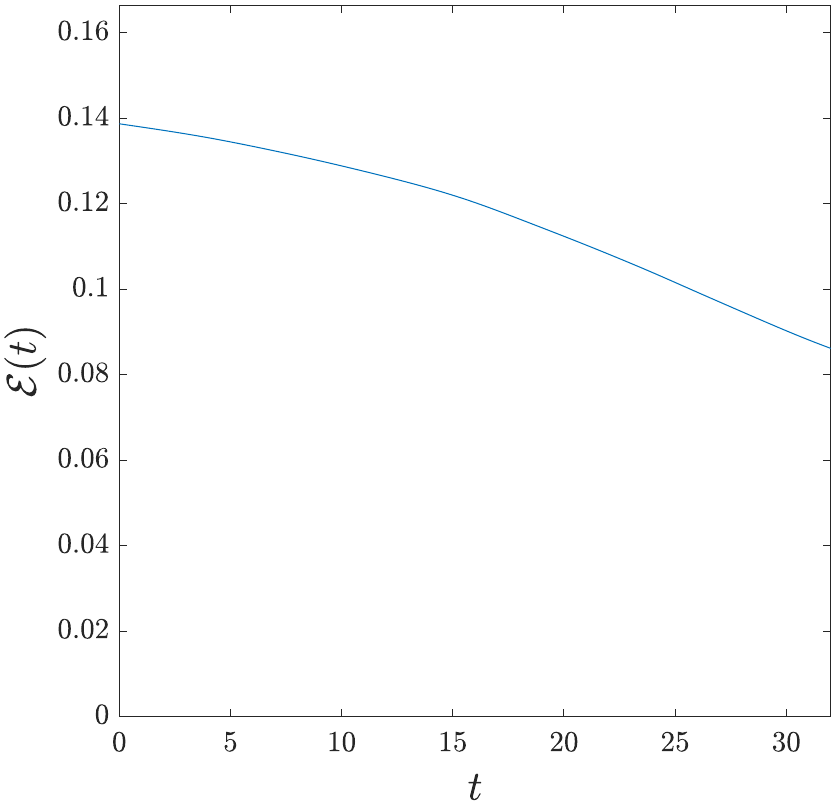}}\qquad
     \subfigure[]{\includegraphics[width=0.45\textwidth]{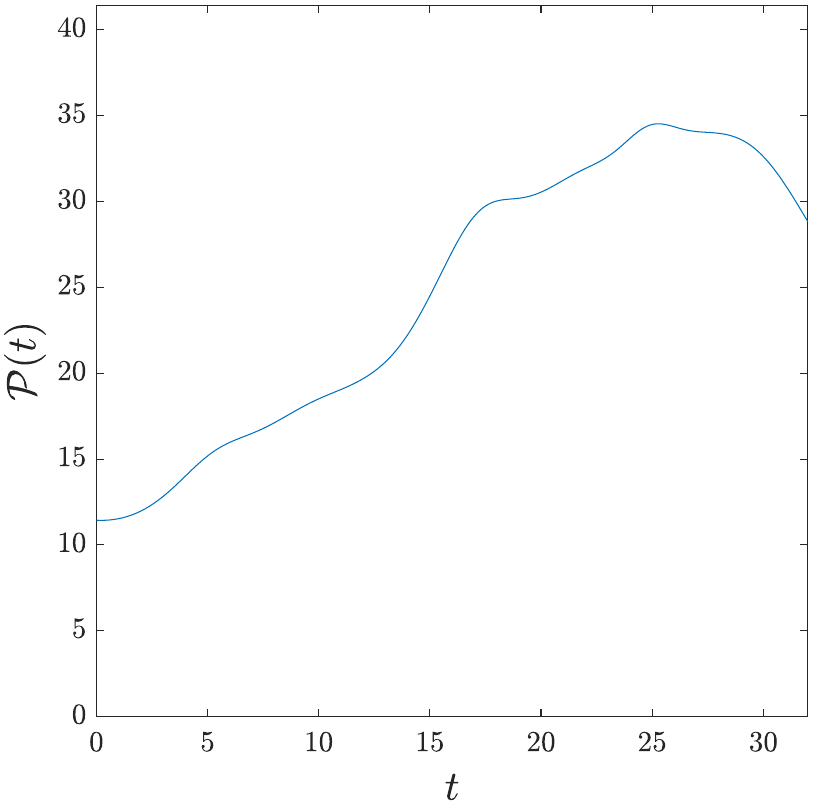}}}
   \caption{The time evolution of (a) the enstrophy $\mathcal{E}(t)$
     and (b) the palinstrophy $\mathcal{P}(t)$,
     cf.~\eqref{Et}--\eqref{Pt}, in the Navier-Stokes flow with the
     initial condition \eqref{TGvort}. }
  \label{fig:EtPt}
\end{figure}

Finally, our main results are shown in Figure
\ref{fig:convergence_eps2} where we plot the (normalized) norms of the
difference between the macroscopic vorticity $\omega^\varepsilon$ in
the Lattice BGK solution and the vorticity $\omega$ in the
Navier-Stokes flow at a certain time $t=T$ as functions of
$\varepsilon$.  The time when this difference is evaluated is chosen
as $T = 1$ and $T= 32$ for the problems with the initial conditions
\eqref{TG} and \eqref{TGvort}. This latter time corresponds to an
instance shortly after the palinstrophy $\mathcal{P}(t)$ has peaked,
cf.~Figure \ref{fig:EtPt}b.  It is clear from Figure
\ref{fig:convergence_eps2} that the norm of the difference can in both
cases be represented very accurately by a power-law relation obtained
via a least-squared fit
\begin{equation}
  \begin{aligned}
\frac{\| \omega(T) - \omega^\varepsilon(T) \|_{L^2
    (\mathbf{T}_x^2)}}{\| \omega(T)  \|_{L^2 (\mathbf{T}_x^2)}} 
& \approx \begin{cases}
  1.0458\cdot 10^{-6} \, \varepsilon^{2.0022} & \text{for the flow with initial condition \eqref{TG}} \\
7.2178\cdot 10^{-4} \, \varepsilon^{2.0001} & \text{for the flow with initial condition \eqref{TGvort}}
\end{cases}  \\
& = \mathcal{O}(\varepsilon^2).
\end{aligned}
\label{eps2}
\end{equation}
Essentially the same power-law behaviour (except for a different
prefactor) was also observed in both cases for different values of
$T$. We note that in the light of the identity
$\| \nabla_x^\perp \cdot u \|_{L^2 (\mathbf{T}_x^2)} = \| \nabla_x u
\|_{L^2 (\mathbf{T}_x^2)}$, relation \eqref{eps2} provides an
indication about the rate with which the hydrodynamic limit is
achieved in 2D in terms of the $H^1$ norm (at the level of velocity).

\begin{figure}
  \centering
 \includegraphics[width=0.6\textwidth]{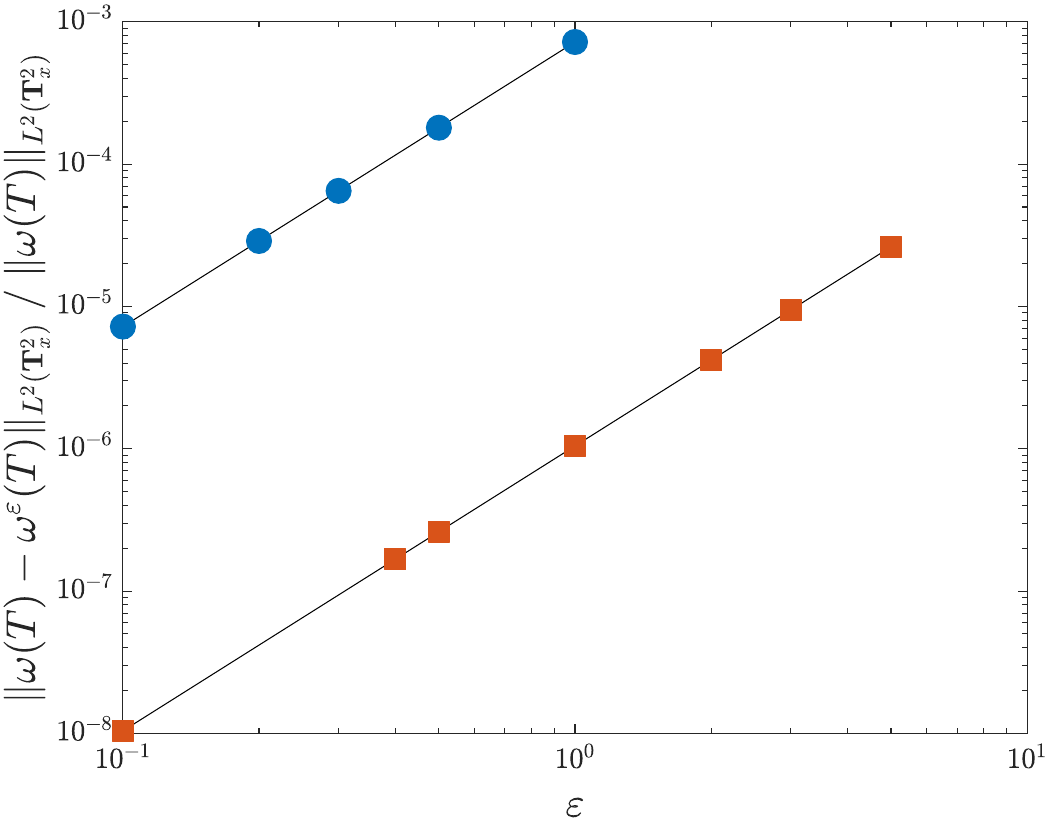}
  \caption{Dependence of $\| \omega(T) - \omega^\varepsilon(T) \|_{L^2
    (\mathbf{T}_x^2)} / \| \omega(T)  \|_{L^2 (\mathbf{T}_x^2)}$ on
  $\varepsilon$ for the Lattice BGK and Navier-Stokes flows with the
  initial conditions \eqref{TG} (red squares) and \eqref{TGvort} (blue
  circles). Solid lines represent the power-law fits \eqref{eps2}.}
  \label{fig:convergence_eps2}
\end{figure}

\section{Summary and Conclusions}

Here are some open questions and points for further discussion:
\begin{itemize}
\item Does the exponent of 2 in \eqref{eps2} depend on the spatial
  dimension ($d = 2$ vs.~$d = 3$) and the type of lattice (D2Q7
  vs.~D2Q9 in 2D)?
\item Can one establish this rate of convergence to the hydrodynamic
  limit rigorously?
\item How does the analysis and numerical results change when bounded domains are considered?
\end{itemize}

\section*{Acknowledgements}

BP was partially supported through an NSERC (Canada) Discovery Grant.
PM thanks the Japan Society for the Promotion of Science and MITACS
for awarding them the Mitacs-JSPS research fellowship to conduct this
work.
Research of TY was partly supported by the JSPS Grants-in-Aid for Scientific Research 24H00186.

\FloatBarrier



\end{document}